\newtheorem{defi}{Definition}[section]
\newtheorem{thm}[defi]{Theorem} %[section] 
\newtheorem{cor}[defi]{Corollary} %[section]
\newtheorem{lem}[defi]{Lemma} %[section]
\newtheorem{rk}[defi]{Remark} %[section]
\begin{document}

\title[Area preserving curvature flow]
{Global stability of traveling waves \\
for an area preserving curvature flow \\
with contact angle condition}

\author{Takashi Kagaya}
\address{Institute of Mathematics for Industry, Kyushu University, Japan}
\email{kagaya@imi.kyushu-u.ac.jp}

\medskip

%\thanks{Date: \today. Corresponding author: T. Kagaya.}

\thanks{
The author is partially supported by JSPS KAKENHI Grant Numbers JP19K14572, JP18H03670 and JST-Mirai Program Grant Number JPMJMI18A2.  
}

\begin{abstract}
We consider an evolving plane curve with two endpoints that can move freely on the $x$-axis with generating constant contact angles. 
We discuss the asymptotic behavior of global-in-time solutions when the evolution of this plane curve is governed by area-preserving curvature flow equation. 
The main result shows that any moving curve converges to a traveling wave if the moving curve starts from an embedded convex curve and remains bounded in global time. 
\end{abstract}

\maketitle \setlength{\baselineskip}{18pt}

\section{Introduction}\label{sec:int}

The purpose of this paper is to investigate the geometrical behavior of the area-preserving curvature flow of a planar curve having two endpoints on the $x$-axis with fixed interior contact angles to this axis. 
The problem is formulated as follows. 
Let $X(p,t) = (x(p,t),y(p,t)): [-1,1] \times [0,T) \to \mathbb{R}^2$ represents the position of the curve $\gamma(t)$ at time $t$. 
Denote by $s$ the arc-length parameter along $\gamma(t)$ measured from $X(-1,t)$ to $X(1,t)$. 
The unit normal vector $\mathcal{N}$ and the signed curvature $\kappa$ are defined by 
\[ \mathcal{N} := \left( -\dfrac{\partial y}{\partial s}, \dfrac{\partial x}{\partial y} \right), \quad \kappa := \left\langle \left( \dfrac{\partial^2 x}{\partial s^2}, \dfrac{\partial^2 y}{\partial s^2}\right), \mathcal{N} \right \rangle, \]
where $\langle \cdot, \cdot \rangle$ is the Euclidean inner product defined on $\mathbb{R}^2$. 
The motion is governed by 
\begin{align}
&\left\langle\dfrac{\partial X}{\partial t}, \mathcal{N}\right\rangle = \kappa - \dfrac{\int_{\gamma(t)} \kappa \; ds}{L(t)}, \label{apcf}\\
&y(\pm1, t) = 0, \label{bc1}\\
&\mathcal{N}(-1,t) = (-\sin \psi_-, \cos \psi_-), \quad \mathcal{N}(1,t) = (\sin \psi_+, \cos \psi_+), \label{bc2}
\end{align}
where $L(t)$ is the length of $\gamma(t)$ and $\psi_\pm \in (0,\pi)$ are constants. 
We assume that 
\begin{equation}
X(\cdot,0) \; \mbox{is injective and} \; x(-1,0) < x(1,0) \tag{A1} \label{as1}
\end{equation}
In this case, the normal vector $\mathcal{N}$ is outward pointing at $t=0$. 
The angles $\psi_-$ and $\psi_+$ are interior contact angles at the endpoints $X(-1,t)$ and $X(1,t)$, respectively (see Figure \ref{fig:angle}). 
Our aim is to study the asymptotic behavior of the flow as $t \to \infty$. 

\begin{figure}[t]
\begin{center}
\scalebox{0.35}{\includegraphics{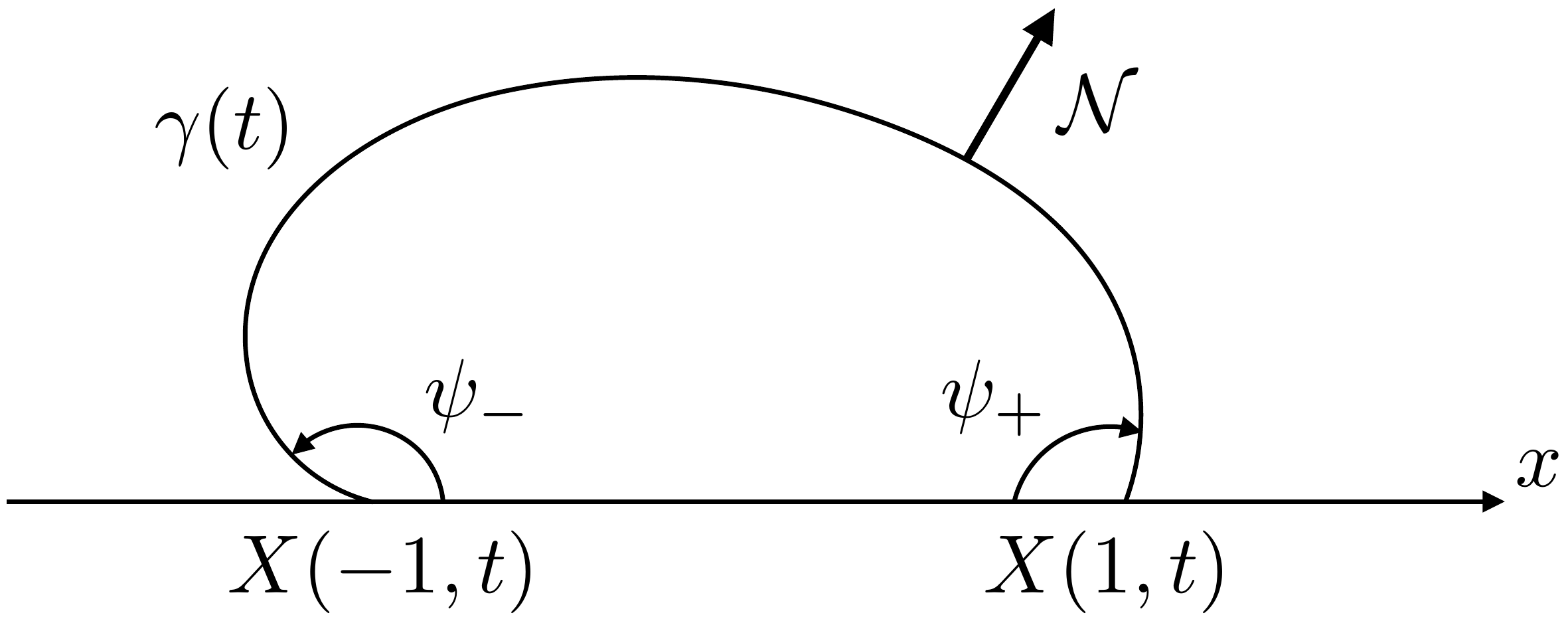}}
%\scalebox{0.30}{\includegraphics{angle2.eps}}
\end{center}
\caption{Direction of the contact angles and the unit normal vector.}
\label{fig:angle}
\end{figure}

Before stating our main results and motivation in detail, we discuss some known results associated with our problem. 
The family of Jordan curves governed by \eqref{apcf} was introduced by Gage \cite{G} as the $L^2$-gradient flow of the length of a Jordan curve under the area-preserving variations. 
Therefore, the length of $\gamma(t)$ is non-increasing in time, while the area enclosed by $\gamma(t)$ is preserved for the area-preserving curvature flow $\{\gamma(t)\}$. 
This variational structure indicates that if $\gamma(t)$ exists globally in time, then it converges to a critical point for length under the area constraint, that is, a circle. 
Indeed, he proved this fact for convex initial curves through the global-in-time existence of the flow. 
The local exponential stability of the circles without the assumption of the convexity on the initial curves was proved in \cite{ES} and the exponential convergence of arbitrary global-in-time flow to a circle was proved in \cite{NN}. 

On the other hand, the free boundary problem \eqref{apcf}--\eqref{bc2} can be introduced as a formal $L^2$-gradient flow of 
\begin{equation}\label{def-ene} 
E(\gamma) := L(\gamma) - x_+(\gamma) \cos \psi_+ + x_-(\gamma) \cos \psi_- 
\end{equation}
under the constraint so that the (signed) area 
\begin{equation}\label{def-area2} 
A_\gamma := \dfrac{1}{2} \int_{\gamma} \langle X_\gamma, \mathcal{N}_\gamma \rangle \; ds 
\end{equation}
is preserved, where $\gamma$ is a simple curve having two endpoints on the $x$-axis, $L(\gamma)$ is its length, $x_+(\gamma)$ (resp.\ $x_-(\gamma)$) is the $x$-coordinate of the right (resp.\ left) endpoint of $\gamma$, $X_\gamma$ is the position of $\gamma$ and $\mathcal{N}_\gamma$ is the outward pointing unit normal vector. 
We note that $A_\gamma$ coincide with the area enclosed by $\gamma$ and the $x$-axis if $\gamma$ is in the upper half space (see \cite{Z}, for example). 
Therefore, we may expect that any global-in-time solution of \eqref{apcf}--\eqref{bc2} converges to a critical point of $E$ under the area constraint. 
However, if $\psi_+ \neq \psi_-$, then the functional $E$ is not bounded from below and there is no critical point (see also Remark \ref{rk:ene}). 
Thus, the asymptotic behavior of global solutions to \eqref{apcf}--\eqref{bc2} is not obvious in view of the variational structure. 
Under the assumption that 
\begin{itemize}
\item[(A')] $\gamma(0)$ is $C^2$ and a concave graph (i.e.\ $\kappa < 0$) over the $x$-axis, and satisfies \eqref{bc1}, \eqref{bc2} for $t=0$, and \eqref{as1}, 
\end{itemize}
the free boundary problem \eqref{apcf}--\eqref{bc2} was studied by Shimojo and the author \cite{SK}. 
In particular, they proved the following statements. 
\begin{itemize}
\item The curve $\gamma(t)$ remains uniformly bounded in global time, and $\gamma(t)$ is a concave graph at any time. 
\item There exists a traveling wave such that the profile curve $\mathcal{W}(0)$ is a concave graph and it is unique up to the scaling and the translation parallel to the $x$-axis. 
Here, the traveling wave is defined by a classical global-in-time solution to \eqref{apcf}--\eqref{bc2} for which the curve $\mathcal{W}(t)$ satisfies 
\begin{equation}\label{def-tw}
\mathcal{W}(t) = \mathcal{W}(0) + ct\vec{e}_1,
\end{equation}
where $\vec{e}_1 = (1,0)$ and $c \in \mathbb{R}$ is some constant. 
\item Any traveling wave is locally exponentially stable. 
That is, if the curvature of $\gamma(0)$ is sufficiently ``close'' to the curvature of $\mathcal{W}(0)$ and if the area enclosed by $\gamma(0)$ and the $x$-axis is equal to the area enclosed by $\mathcal{W}(0)$ and the $x$-axis, $\gamma(t)$ converges exponentially to $\mathcal{W}(t) + a \vec{e}_1$ under the Hausdorff metric for some $a \in \mathbb{R}$. 
\end{itemize}
We note that they only study the case $\psi_\pm \in (0,\pi/2)$ so that the initial curve $\gamma(0)$ and the profile curve $\mathcal{W}(0)$ can be assumed to represent concave graphs. 
For the case of more general contact angles, the author and Kohsaka \cite{KK} studied the existence of traveling waves and geometric properties of these traveling waves under an  assumption associated to the winding number $-\int_{\mathcal{W}(0)} \kappa_{\mathcal{W}} \; ds = \psi_+ + \psi_-$, where $\kappa_{\mathcal{W}}$ is the curvature of the profile curve $\mathcal{W}(0)$. 
They proved the existence of a traveling wave for any contact angles $\psi_\pm \in (0,\pi)$. 
Furthermore, the traveling wave is concave and unique up to scaling and translation (see Theorem \ref{thm:ex-traveling} for details). 

Based on the known results, we are interested in the asymptotic behavior of a solution to our problem without the assumptions that the initial curve is a graph or the ``closeness'' between the initial curve and a traveling wave. 
The first result of the present paper is local-in-time existence and uniqueness of a solution to \eqref{apcf}--\eqref{bc2}. 
We assupme \eqref{as1} and 
\begin{align}
& X(\cdot,0) \in C^2([-1,1]) \; \mbox{and satisfies \eqref{bc1} and \eqref{bc2} for} \; t=0, \tag{A2} \label{as2}\\
%& \kappa(p,0) < 0 \quad \mbox{for} \quad p \in [-1,1], \tag{A3} \label{as3}\\
& -\int_{\gamma(0)} \kappa(\cdot,0) \; ds = \psi_+ + \psi_-, \tag{A3} \label{as3}
%& \psi_+ + \psi_- \le \pi. \tag{A5} \label{as5}
\end{align}
and find a unique solution $X$ satisfying 
\begin{equation}\label{restrict-para}
\left|\dfrac{\partial X(p,t)}{\partial p}\right| = \dfrac{L(t)}{2} \quad \mbox{for} \quad p \in [-1,1]. 
\end{equation}
Since the solution can be re-parametrized with respect to $p \in (-1,1)$, the solution $X$ to \eqref{apcf}--\eqref{bc2} is not unique without the restriction \eqref{restrict-para}, while the time evolution of the shape $\gamma(t)$ is unique. 

\begin{thm}\label{thm:main1}
Assume \eqref{as1}--\eqref{as3}. 
Let $\alpha \in (1/2,1)$ be an arbitrary constant. 
Then, there exists a time $T>0$ such that a unique smooth solution 
\begin{equation}\label{reg-sol} 
X \in C^{{1+\alpha},(1+\alpha)/2}([-1,1] \times [0,T)) \cap C^\infty([-1,1]\times(0,T)) 
\end{equation}
to \eqref{apcf}--\eqref{bc2} satisfying \eqref{restrict-para} exists. 
\end{thm}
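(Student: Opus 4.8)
The plan is to convert the geometric free boundary problem into a quasilinear parabolic PDE for a scalar height-type function and then invoke standard parabolic theory. First I would fix a suitable reference curve $\gamma_*$ satisfying the boundary conditions \eqref{bc1}, \eqref{bc2} and write the unknown curve $\gamma(t)$ as a graph over $\gamma_*$ in the normal direction, i.e. $X(p,t) = X_*(p) + \rho(p,t)\mathcal{N}_*(p)$ for a scalar function $\rho$; by \eqref{as1}, \eqref{as2} this is admissible for short time since $\gamma(0)$ is $C^2$ and embedded. In these coordinates the curvature $\kappa$ becomes a second-order quasilinear elliptic operator in $\rho$, and the nonlocal term $\int_{\gamma(t)}\kappa\,ds / L(t)$ becomes a lower-order nonlocal term that is smooth in $\rho$ and its first derivatives. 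The contact angle condition \eqref{bc2} translates into an oblique (Robin-type) boundary condition on $\rho$ at $p = \pm 1$, and \eqref{bc1} fixes the endpoints on the $x$-axis; together these give a well-posed boundary condition for the parabolic equation. The parametrization constraint \eqref{restrict-para} is not imposed at this stage — one first solves for the shape $\gamma(t)$ and recovers the distinguished parametrization afterward.

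The second step is to establish short-time existence and uniqueness for the resulting quasilinear parabolic initial-boundary value problem for $\rho$ with the oblique boundary condition. I would linearize about $\rho \equiv 0$ (or about the initial datum) and check that the linearized operator is uniformly parabolic and that the boundary operator satisfies the Lopatinskii--Shapiro (complementing) condition — for a scalar second-order equation with a genuinely oblique first-order boundary operator this is automatic. Then maximal $L^p$-regularity or the classical Solonnikov/Ladyzhenskaya--Solonnikov--Uraltseva Schauder theory in parabolic Hölder spaces $C^{1+\alpha,(1+\alpha)/2}$ applies, and a contraction mapping / fixed point argument in a ball of $C^{1+\alpha,(1+\alpha)/2}([-1,1]\times[0,T])$ yields, for $T$ small, a unique solution $\rho$; the compatibility conditions of order zero needed here are exactly \eqref{bc1} and \eqref{bc2} at $t=0$, which we have assumed. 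One subtlety is that \eqref{apcf} has no tangential component prescribed, so the equation as written determines only the normal velocity; working in the graph representation over $\gamma_*$ removes this degeneracy because the ansatz $X = X_* + \rho\mathcal{N}_*$ implicitly fixes a parametrization, and the PDE for $\rho$ is then strictly parabolic.

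The third step is parabolic bootstrap: once $\rho \in C^{1+\alpha,(1+\alpha)/2}$ solves the equation, the coefficients of the quasilinear operator are $C^{\alpha,\alpha/2}$, so Schauder estimates give $\rho \in C^{2+\alpha,1+\alpha/2}$ on $[-1,1]\times(0,T)$, and iterating (differentiating the equation in space and time, using the smoothness of the nonlinearities and of the nonlocal term, and the linear-theory estimates up to the boundary) yields $\rho \in C^\infty([-1,1]\times(0,T))$, hence $X \in C^\infty([-1,1]\times(0,T))$ as claimed. The nonlocal term requires a small extra argument: $\int_{\gamma(t)}\kappa\,ds$ and $L(t)$ are smooth functionals of $\rho(\cdot,t)$, so they are as regular in $t$ as $\rho$ is, and this does not obstruct the bootstrap.

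The final step is to pass from the solution $\gamma(t)$ (the geometric shape) to a parametrization $X$ satisfying the constraint \eqref{restrict-para}. Given the smooth family $\{\gamma(t)\}$, one solves an ODE in $p$ (a reparametrization $p \mapsto \varphi(p,t)$) so that the new parametrization has $|\partial_p X| = L(t)/2$; because $L(t)$ is smooth and positive and the original parametrization is smooth and regular, $\varphi$ exists, is smooth, is a diffeomorphism of $[-1,1]$ for each $t$, and depends smoothly on $t$, while fixing $\varphi(\pm1,t) = \pm1$ so that the boundary conditions are preserved. Uniqueness of $X$ under \eqref{restrict-para}: two such parametrizations of the same shape differ by a reparametrization with constant speed $L(t)/2$ fixing the endpoints, which forces the identity. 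I expect the main obstacle to be the careful verification that the boundary conditions \eqref{bc1}--\eqref{bc2} produce an admissible oblique boundary operator for the scalar equation and that the zeroth-order compatibility conditions are exactly what \eqref{as2} provides — i.e. the bookkeeping of how the contact angle condition linearizes — rather than the interior regularity, which is routine once the linear theory is in place.
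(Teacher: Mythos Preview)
Your overall strategy is a legitimate alternative to the paper's, but the two routes are genuinely different. The paper does not use a normal-graph representation at all; instead it passes to the tangent-angle function $\Theta$, for which the contact angle condition becomes a \emph{fixed Dirichlet} condition $\Theta(\pm1,t)=\mp\psi_\pm$, imposes the constant-speed gauge \eqref{restrict-para} from the start so that the tangential velocity $\alpha$ is determined by the explicit formula \eqref{alpha-p}, and then rescales via $z=(s+1)/2$, $\tau=\int_0^t L^{-2}\,d\tilde t$ to obtain the \emph{semilinear} system \eqref{normal-eq} for $(v,\eta)=(\Theta,\log L)$. Short-time existence is then read off from analytic semigroup theory (Lemma~\ref{short-v}), and $X$ is reconstructed from $(\Theta,L)$ and $x(-1,0)$ via \eqref{parametrization}. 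The payoff of the paper's route is that the free boundary disappears entirely at the PDE level: the endpoint motion is recovered a posteriori from \eqref{deri-x}, and the nonlocal term becomes a scalar coefficient $Q(\tau)$.

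In your version there is a concrete gap at precisely the free-boundary step. If $\gamma_*$ satisfies \eqref{bc1}--\eqref{bc2} and you set $X=X_*+\rho\,\mathcal{N}_*$, then $\mathcal{N}_*(\pm1)=(\mp\sin\psi_\pm,\cos\psi_\pm)$, so the endpoint of $\gamma(t)$ is $X_*(\pm1)+\rho(\pm1,t)\,(\mp\sin\psi_\pm,\cos\psi_\pm)$. Imposing $y(\pm1,t)=0$ forces $\rho(\pm1,t)\cos\psi_\pm=0$, hence $\rho(\pm1,t)=0$ whenever $\psi_\pm\neq\pi/2$; the endpoint then cannot slide along the $x$-axis, and the contact angle becomes an \emph{additional} first-order condition on $\rho$, overdetermining your scalar second-order problem. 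So \eqref{bc1} and \eqref{bc2} do not ``together give a well-posed boundary condition'' for $\rho$ in this ansatz. The standard fix is to replace $\mathcal{N}_*$ by a transversal vector field $\nu$ that is \emph{tangent to the $x$-axis at the endpoints}; then \eqref{bc1} is built into the ansatz and \eqref{bc2} alone furnishes the single oblique boundary condition. With that modification the remainder of your outline --- uniform parabolicity, Lopatinskii--Shapiro, contraction in $C^{1+\alpha,(1+\alpha)/2}$, Schauder bootstrap, and the final reparametrization to enforce \eqref{restrict-para} --- is sound.
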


We note that the argument for Theorem \ref{thm:main1} is based on analytic semigroup theory as in \cite{SK}. 
Since the flow is unique, we can discuss the stability of the traveling waves $\mathcal{W}(t)$ obtained by \cite{KK} by studying the asymptotic behavior of global-in-time solutions to \eqref{apcf}--\eqref{bc2}. 
In the present paper, we also assume the following properties to study the asymptotic behavior of solutions: 
\begin{itemize}
\item[(A4)] The initial curve is concave, i.e., $\kappa(p,0) < 0$ for $p \in [-1,1]$. 
\item[(A5)] The solution obtained in Theorem \ref{thm:main1} exists in global time. 
\item[(A6)] There exists a constant $c_1 >0$ such that $\sup_{(p,t) \in (-1,1) \times (0,\infty)} |\kappa(p,t)| \le c_1$.
\item[(A7)] There exists a constant $c_2 > 0$ such that $\sup_{t \in (0,\infty)} L(t) \le c_2$. 
\end{itemize}
We denote by $A(t)$ and $A_{\mathcal{W}}$ the (signed) areas
\begin{align}
&A(t) := \dfrac{1}{2} \int_{\gamma(t)} \langle X, \mathcal{N} \rangle \; ds, \label{def-area} \\
&A_{\mathcal{W}} := \dfrac{1}{2} \int_{\mathcal{W}(0)} \langle X_{\mathcal{W}}, \mathcal{N}_{\mathcal{W}} \rangle \; ds, \label{def-area3}
\end{align}
where $X_{\mathcal{W}}$ and $\mathcal{N}_{\mathcal{W}}$ are the position and the outward pointing unit normal vector of the profile curve $\mathcal{W}(0)$, respectively. 
The area $A(t)$ is preserved with respect to time $t$ (see Lemma \ref{lem:area-pre}). 
Because the concavity of $\gamma(t)$ is preserved (see Lemma \ref{lem:concave}), $A(t)$ coincides with the area enclosed by $\gamma(t)$ and the $x$-axis if the curve $\gamma(t)$ is simple. 
We note that we do not know whether $\gamma(t)$ is simple or not for $t>0$ even if we assume the simplicity of the initial curve $\gamma(0)$ as in \eqref{as1} (see also Remark \ref{rk:pre-sim}). 
The possibility of a loss of embeddedness makes it difficult to prove the global boundedness in (A6) and (A7) in general (see also Remark \ref{rk:regularity}). 
The main result is stated as follows. 

\begin{thm}\label{thm:main2}
Let $\gamma(t)$ be the family of curve obtained in Theorem \ref{thm:main1}. 
Assume {\rm (A4)}--{\rm (A7)}. 
Let $\mathcal{W}(t)$ be the traveling wave obtained by \cite{KK} such that $A_{\mathcal{W}} = A(0)$. 
Denote by $\kappa(\cdot,t)$ and $\kappa^*$ the curvatures of $\gamma(t)$ and $\mathcal{W}(0)$, respectively. 
Then, $\kappa(\cdot,t)$ converges exponentially to $\kappa^*$ in $C^\infty$, and there exists a constant $a \in \mathbb{R}$ such that $\gamma(t)$ converges to $\mathcal{W}(t) + a\vec{e}_1$ under the Hausdorff metric, where $\vec{e}_1 = (1,0)$. 
\end{thm}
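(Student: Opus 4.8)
The plan is to work entirely on the level of the curvature function, exploiting the concavity preserved by Lemma~\ref{lem:concave} to reparametrize each $\gamma(t)$ by the tangent angle and thereby turn \eqref{apcf}--\eqref{bc2} into a scalar nonlocal parabolic equation for $\kappa$ on a \emph{fixed} domain. Since the total turning is pinned by \eqref{as3} to $\psi_+ + \psi_-$, the tangent angle $\theta$ sweeps a fixed interval, say $\theta \in [\theta_-, \theta_+]$ with $\theta_+ - \theta_- = \psi_+ + \psi_-$, and the boundary conditions \eqref{bc1}--\eqref{bc2} become time-independent conditions at $\theta = \theta_\pm$. Writing $\kappa = \kappa(\theta, t)$, equation \eqref{apcf} takes the form $\partial_t \kappa = \kappa^2 \partial_\theta^2 \kappa + (\text{lower order and nonlocal terms})$, and the traveling-wave profile $\kappa^*$ from \cite{KK} is precisely the unique stationary solution of this reduced problem compatible with the area $A_{\mathcal{W}} = A(0)$. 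The first substantive step is to establish uniform parabolic estimates: assumptions (A6)--(A7) give $L^\infty$ bounds on $\kappa$ and on $L(t)$, and combined with the preserved area $A(0)$ and the concavity, one derives a uniform lower bound $|\kappa| \ge c_0 > 0$ (otherwise the curve would degenerate), so the reduced equation is uniformly parabolic; bootstrapping via interior Schauder and parabolic $L^p$ estimates then yields uniform $C^\infty$ bounds on $\kappa(\cdot, t)$ on the whole half-line $t \ge 1$.

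The second step is to identify the $\omega$-limit set. Using the gradient-flow structure recalled in the introduction — $E(\gamma(t))$ is nonincreasing under the area constraint — together with the uniform bounds just obtained, a standard compactness argument shows that along any sequence $t_n \to \infty$ a subsequence of $\kappa(\cdot, t_n)$ converges in $C^\infty$ to a stationary solution of the reduced equation with area $A(0)$; by the uniqueness part of Theorem~\ref{thm:ex-traveling} (uniqueness up to scaling and translation of the traveling wave obtained in \cite{KK}) this limit is exactly $\kappa^*$, and since the area constraint fixes the scaling, the limit is $\kappa^*$ itself with no ambiguity. Hence $\kappa(\cdot, t) \to \kappa^*$ in $C^\infty$ without passing to a subsequence.

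The third step upgrades this to \emph{exponential} convergence. Linearizing the reduced nonlocal equation about $\kappa^*$ one obtains a self-adjoint (in a suitable weighted inner product coming from the gradient-flow structure) second-order operator with the given time-independent boundary conditions; the area constraint kills the would-be neutral direction, and one must check that the remaining spectrum lies in $\{\mathrm{Re}\,\lambda \le -\delta\}$ for some $\delta > 0$. This spectral gap is essentially the content of the local stability statement already quoted from \cite{SK, KK}; combined with the fact (just proved) that the solution enters an arbitrarily small $C^\infty$-neighborhood of $\kappa^*$, the linear estimate plus a Gronwall/bootstrap argument on the nonlinear remainder gives $\|\kappa(\cdot, t) - \kappa^*\|_{C^k} \le C_k e^{-\delta t}$ for every $k$. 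Finally, exponential convergence of $\kappa$ is integrated up: reconstructing $X(\cdot, t)$ from $\kappa(\cdot, t)$ via the Frenet formulas and using the endpoint conditions \eqref{bc1} fixes the curve up to horizontal translation, the velocity of the endpoints converges exponentially to the traveling-wave speed $c$, so $x(\pm 1, t) - ct$ converges to some limit; choosing $a$ accordingly and using the Hausdorff continuity of the reconstruction map yields $\gamma(t) \to \mathcal{W}(t) + a\vec{e}_1$ in Hausdorff distance.

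The main obstacle I anticipate is the uniform parabolicity, i.e.\ the lower bound $|\kappa| \ge c_0 > 0$ and the attendant uniform $C^\infty$ estimates: (A6)--(A7) are only $L^\infty$ bounds, and because embeddedness of $\gamma(t)$ is \emph{not} known for $t > 0$ (as the authors emphasize), one cannot directly use geometric comparison arguments to prevent the curvature from vanishing or the reduced domain from degenerating. Making the reduction to the tangent-angle parametrization rigorous under only the concavity hypothesis, and extracting the nondegeneracy from the conserved area together with (A6)--(A7), is where the real work lies; once uniform parabolicity is in hand, the limit-set identification and the exponential rate follow the by-now-standard gradient-flow-plus-spectral-gap template.
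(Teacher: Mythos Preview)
Your first and third steps are essentially what the paper does: Lemmas~\ref{lem:lip-theta} and~\ref{lem:uni-nega-k} give the uniform bound $|\kappa_\theta|\le M_1$ and the uniform negativity $\kappa\le -M_2<0$ from (A6)--(A7), which yields uniform parabolicity of \eqref{eq-theta} and the $C^\infty$ compactness you want; and once the orbit is known to enter a small neighborhood of $\kappa^*$, the paper simply invokes the local exponential stability Theorem~\ref{thm:local-sta} from \cite{SK}, exactly as you propose.

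The genuine gap is your second step. You write that the nonincreasing energy $E(\gamma(t))$ together with uniform bounds forces any subsequential limit to be a stationary solution of the reduced equation. This fails precisely for the reason the paper stresses in the introduction and in Remark~\ref{rk:ene}: when $\psi_+\neq\psi_-$ the functional $E$ is \emph{unbounded from below} (it changes linearly under horizontal translation), so the standard LaSalle-type argument ``$E$ decreasing and bounded below $\Rightarrow$ $\tfrac{d}{dt}E\to 0$ $\Rightarrow$ limit is critical'' is unavailable. Indeed $\tfrac{d}{dt}E=-\int(\kappa+\tfrac{\psi_++\psi_-}{L})^2\,ds$ would force the limit to have constant curvature, whereas the traveling-wave curvature \eqref{curvature-tw} is not constant for $c\neq 0$; along the traveling wave itself $E$ decreases at a constant nonzero rate. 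So $E$ cannot identify the $\omega$-limit set.

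The paper's substitute is a \emph{new} Lyapunov functional $\tilde F(t)$, defined in \eqref{def-lia}, which depends only on $\kappa(\theta,t)$ and is therefore translation-invariant and bounded under (A6)--(A7) and \eqref{uni-nega-k}. Lemma~\ref{lem:liapunov} computes
\[
\frac{d}{dt}\tilde F(t)=\text{(positive factor)}\left(\int\frac{\kappa_t^2}{\kappa^2}\,d\theta-\frac{1}{\psi_++\psi_-}\Bigl(\int\frac{\kappa_t}{\kappa}\,d\theta\Bigr)^2\right)\ge 0,
\]
whence the bracket tends to $0$ and, by the equality case in H\"older, any subsequential limit satisfies $\overline\kappa=\alpha\hat\kappa$ for some $\alpha\in\mathbb R$, i.e.\ $\hat\kappa$ solves $\alpha\hat\kappa=\hat\kappa^2(\hat\kappa_{\theta\theta}+\hat\kappa+\tfrac{\psi_++\psi_-}{L(\hat\kappa)})$. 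A separate argument (Lemma~\ref{lem:lim-kappa}, using the support function and the positivity of the preserved area) is then needed to rule out $\alpha\neq 0$ and conclude $\hat\kappa=\kappa_{\mathcal W}$. This translation-invariant Lyapunov functional and the elimination of the scaling mode $\alpha$ are the missing ideas in your outline.
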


As we mentioned, Shimojo and the author \cite{SK} proved the uniformly boundedness of the solution under the assumption (A'). 
That is, (A5)--(A7) hold only if we assume (A'). 
Therefore the following corollary follows immediately from Theorem \ref{thm:main2}. 

\begin{cor}
Let $\gamma(0)$ satisfies {\rm (A')}. 
Denote by $\gamma(t)$ and $\mathcal{W}(t)$ the global-in-time solution to \eqref{apcf}--\eqref{bc2} obtained by \cite{SK} and the traveling wave obtained by \cite{KK, SK} such that $A_{\mathcal{W}} = A(0)$, respectively. 
Then, $\kappa(\cdot,t)$ converges exponentially to $\kappa^*$ in $C^\infty$ and there exists a constant $a \in \mathbb{R}$ such that $\gamma(t)$ converges to $\mathcal{W}(t) + a\vec{e}_1$ under the Hausdorff metric, where $\kappa(\cdot,t)$ and $\kappa^*$ are the curvature of $\gamma(t)$ and $\mathcal{W}(0)$, respectively.
\end{cor}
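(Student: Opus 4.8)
The plan is to check that assumption {\rm (A')} forces all of \eqref{as1}, \eqref{as2}, \eqref{as3} and {\rm (A4)}--{\rm (A7)}, and then to apply Theorem \ref{thm:main2} verbatim; the only real work is the verification of \eqref{as3} and the identification of the flow of Theorem \ref{thm:main1} with the one constructed in \cite{SK}. First, {\rm (A')} literally contains \eqref{as1}, the $C^2$-regularity of $\gamma(0)$ and the boundary conditions \eqref{bc1}, \eqref{bc2} at $t=0$, hence \eqref{as2}; and {\rm (A')} asserts $\kappa(\cdot,0)<0$, hence {\rm (A4)}. For \eqref{as3} I would write the unit tangent of $\gamma(0)$ as $(\cos\theta(\cdot,0),\sin\theta(\cdot,0))$, so that $\mathcal{N}(\cdot,0)=(-\sin\theta(\cdot,0),\cos\theta(\cdot,0))$; since $\gamma(0)$ is a graph over the $x$-axis one has $\theta(\cdot,0)\in(-\pi/2,\pi/2)$, and then the contact-angle condition \eqref{bc2} together with $\psi_\pm\in(0,\pi/2)$ gives $\theta(-1,0)=\psi_-$ and $\theta(1,0)=-\psi_+$. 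Integrating $\kappa(\cdot,0)=\partial\theta(\cdot,0)/\partial s$ over $\gamma(0)$ then yields $-\int_{\gamma(0)}\kappa(\cdot,0)\,ds=\theta(-1,0)-\theta(1,0)=\psi_++\psi_-$, which is \eqref{as3}. Thus Theorem \ref{thm:main1} applies and produces the unique smooth solution satisfying \eqref{restrict-para}.

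Next I would invoke \cite{SK}: under {\rm (A')} the family $\{\gamma(t)\}$ exists globally in time and remains uniformly bounded, so that {\rm (A5)}--{\rm (A7)} hold. Since the shape evolution $\{\gamma(t)\}$ is uniquely determined (as noted after Theorem \ref{thm:main1}), and since {\rm (A5)}--{\rm (A7)} concern only the shapes $\gamma(t)$ and not the choice of parametrization, they hold for the solution produced in Theorem \ref{thm:main1}. I would also note that $A(0)>0$ because $\gamma(0)$ is a concave graph over the $x$-axis, and that --- a traveling wave being unique up to scaling (by \cite{KK}, and by \cite{SK} in the present range of contact angles) --- there is a unique scaling for which the profile $\mathcal{W}(0)$ satisfies $A_{\mathcal{W}}=A(0)$. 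With {\rm (A4)}--{\rm (A7)} and this area matching in hand, Theorem \ref{thm:main2} yields the exponential convergence of $\kappa(\cdot,t)$ to $\kappa^*$ in $C^\infty$ and the Hausdorff convergence of $\gamma(t)$ to $\mathcal{W}(t)+a\vec{e}_1$ for some $a\in\mathbb{R}$, which is the assertion of the corollary.

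I do not expect a serious obstacle, since the corollary is essentially the specialization of Theorem \ref{thm:main2} to the class of initial data treated in \cite{SK}. The only mildly delicate points are the angle computation for \eqref{as3}, which is harmless thanks to the graph property and the restriction $\psi_\pm\in(0,\pi/2)$, and the bookkeeping identifying the uniformly bounded \cite{SK} flow with the reparametrized solution of Theorem \ref{thm:main1}, which follows from the uniqueness of the shape evolution.
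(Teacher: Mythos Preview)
Your proposal is correct and follows precisely the route the paper takes: the paper simply remarks that under {\rm (A')} the results of \cite{SK} guarantee {\rm (A5)}--{\rm (A7)}, so the corollary ``follows immediately from Theorem~\ref{thm:main2}.'' Your write-up is in fact more careful than the paper's, since you explicitly verify \eqref{as3} via the angle computation and address the identification of the \cite{SK} flow with the solution of Theorem~\ref{thm:main1}, points the paper leaves implicit.
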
 

We now recall some results related to our problem. 
As previously mentioned, Gage \cite{G} proved that any flow starting from a convex Jordan curve and governed by \eqref{apcf} converges to a circle. 
Chao, Ling and Wang \cite{CLW} gave a detailed proof of the result by Gage \cite{G} by analyzing the the uniformly boundedness of the curvature. 
We note that their argument for showing the uniformly boundedness of the curvature seems to be difficult to modify for our problem. 
For the motion of a graph governed by $\langle \frac{\partial X}{\partial t}, \mathcal{N} \rangle = \kappa + 1$, \eqref{bc1} and \eqref{bc2}, Guo, Matano, Shimojo and Wu \cite{GMSW} studied the asymptotic behavior of solution. 
They proved the global-in-time existence of a unique solution for any initial graph (without a concavity assumption on the initial graph) and gave a classification of the asymptotic behavior of the solution, namely, any solution either (A) expands far away as $t \to \infty$, (B) remains bounded and converges to a traveling wave as $t \to \infty$, or (C) shrinks to a point in finite time. 
In particular, the convergence in the case (B) was proved without any ``closeness'' assumption between the initial graph and a traveling wave. 
However, it is difficult to apply their arguments to our problem because the arguments are based on the zero-number principle as in \cite{An}. 
In general, the zero-number principle does not hold for differential equations such as \eqref{apcf} that have a nonlocal term. 

We also remark that various geometric flows with contact angle conditions similar to ours have been studied. 
For example, we refer the reader to \cite{BN, CGK, CG, many} for the curve-shortening flows, \cite{EF} for the area-preserving curvature flows and \cite{AB, KK2} for the surface diffusion. 

Area-preserving curvature flows can also be studies for general closed curves. 
Gage \cite{G} also suggested an example that should exhibit a loss of embeddedness and a blow-up of the curvature for a flow starting from a non-convex embedded curve. 
These singularities were confirmed numerically by Mayer \cite{M}. 
The motion of a non-simple closed curve was studied by \cite{EI, WK, WWY} and, in particular, threshold properties between blow-up and global-in-time existence of the flow were discussed. 
Escher and Ito \cite{EI} proved that the curvature blows up in finite time if the rotation index of the initial curve is $1$ and the signed area defined by the integration of the support function as in \eqref{def-area} is negative. 
A criterion similar to that above appears in area-preserving curvature flow with free boundary for convex closed curves \cite{Ma}, and also possibly appears in our problem (see Remark \ref{rk:posi-area-TW}). 

The present paper is organized as follows. 
In Section \ref{sec:short-ex}, we prove the existence of a unique short time solution as in Theorem \ref{thm:main1}. 
In Section \ref{sec:pre}, we study the uniform negativity of the curvature. 
Some known results are also stated in details to discuss the stability of the traveling waves. 
The known results relate to the existence theory for traveling waves in \cite{KK} and the local stability theory of the traveling waves in \cite{SK}. 
In Section \ref{sec:main}, we construct a Lyapunov functional of the curvature and analyze the $\omega$-limit points of the curvature. 
Because the Lyapunov functional does not depend on the position of a curve, it follows that we will find a different energy structure from the variational structure following from \eqref{def-ene}. 
We also prove Theorem \ref{thm:main2} in Section \ref{sec:main}. 

%%%%%%%%%%%%%%%%%%%%%%%%%%%%%%%%%%%%%%%%%%%%%%%%%%%%%%%%%

\section{Short time existence}\label{sec:short-ex}

In \cite{SK}, the short time existence of a flow governed by \eqref{apcf}--\eqref{bc2} starting from a concave graph $\gamma(0)$ was studied. 
Our goal is to extend their argument to apply to our problem without the assumption that the initial curve $\gamma(0)$ is a graph. 
For the reader's convenience, we summarize the argument from \cite{SK}. 
They first rewrite the evolution equation using the length $L(t)$ and the angle $\Theta$ between the tangent vector of a solution $X$ and the $x$-axis from the evolution equation using the height function $y(x,t)$ of $\gamma(t)$.
If we apply a scaling, then the problem can be converted into a simple semi-linear problem, which makes it possible to apply the well-documented general theory for semi-linear problems. 
We apply the same argument below. 
We also note that the original argument comes from \cite{GMSW}. 

Let $\mathcal{T}$ be the tangent vector for a solution $X$, that is, 
\[ \mathcal{T} := \left( \dfrac{\partial x}{\partial s}, \dfrac{\partial y}{\partial s} \right). \]
Since the motion is governed by \eqref{apcf}, there exists some function $\alpha: (-1,1) \times (0,T)$ such that 
\begin{equation}\label{tau-eq}
\dfrac{\partial X}{\partial t} = \left(\kappa - \dfrac{\int_{\gamma(t)} \kappa \; ds}{L(t)} \right) \mathcal{N} + \alpha \mathcal{T}. 
\end{equation}
The angle function $\Theta$ is defined by 
\begin{equation}\label{def-angle-fun}
\mathcal{T}(p,t) = (\cos \Theta(p,t), \sin \Theta(p,t)) \quad \mbox{for} \quad (p,t) \in [-1,1] \times [0,T),
\end{equation}
Denote by $\upsilon (p,t)$ the length element of $\gamma(t)$, that is, 
\[
\upsilon(p,t) := \left|\dfrac{\partial F}{\partial p}(p,t) \right| \quad \mbox{for} \quad (p,t) \in [-1,1] \times [0,T). 
\]
Therefore, the following equality holds: 
\begin{equation}\label{deri-p-s} 
\dfrac{\partial}{\partial s} = \dfrac{1}{\upsilon} \dfrac{\partial}{\partial p}. 
\end{equation}
We also note that the relation 
\begin{equation}\label{deri-theta}
\dfrac{\partial \Theta}{\partial s} = \dfrac{1}{\upsilon} \dfrac{\partial \Theta}{\partial p} = \kappa
\end{equation}
holds. 
The following lemma will be applied to introduce a semi-linear problem and to analyze some geometric properties of $\gamma(t)$. 
We remark that we do not assume \eqref{restrict-para} in this lemma. 

\begin{lem}
Assume a smooth solution \eqref{reg-sol} to \eqref{apcf}--\eqref{bc2} with the initial assumptions \eqref{as1}--\eqref{as3} exists in some time interval $[0,T)$. 
Then, the curvature $\kappa$ of $\gamma(t)$ satisfies 
\begin{equation}
\kappa_t = \kappa_{ss} + \alpha \kappa_s + \kappa^2 \left(\kappa + \dfrac{\psi_+ + \psi_-}{L(t)} \right) \quad {\rm for} \quad -1 \le p \le 1, \; \; 0 < t < T, \label{k-s-eq}
\end{equation}
Furthermore, $\alpha$, $\upsilon$ and $\Theta$ satisfy 
\begin{align}
&\; \alpha(\pm1,t) = \pm \cot \psi_\pm \left(\kappa(\pm1,t) + \dfrac{\psi_+ + \psi_-}{L(t)}\right) \quad {\rm for} \quad 0 < t < T, \label{bdry-ti-V}\\
&\; \dfrac{\partial \upsilon}{\partial t} = \left\{ - \left(\kappa + \dfrac{\psi_+ + \psi_-}{L(t)} \right)\kappa + \alpha_s \right\} \upsilon \quad {\rm for} \quad -1 \le p \le 1, \; \; 0 < t <T, \label{deri-leng} \\
&\; \dfrac{\partial \Theta}{\partial t} = \kappa_s + \alpha \kappa \quad \mbox{for} \quad -1 \le p \le 1, \; \; 0<t<T. \label{eq-Theta}
\end{align}
\end{lem}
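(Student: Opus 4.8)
The plan is to derive each identity from the geometric evolution equations by exploiting the standard commutation formulas for moving curves together with the boundary conditions \eqref{bc1}--\eqref{bc2}. First I would record the basic commutator
\[
\left[ \dfrac{\partial}{\partial t}, \dfrac{\partial}{\partial s} \right] = \left\{ \left(\kappa + \dfrac{\psi_+ + \psi_-}{L(t)}\right)\kappa - \alpha_s \right\} \dfrac{\partial}{\partial s},
\]
which follows by differentiating $\upsilon = |\partial X/\partial p|$ in $t$, using \eqref{tau-eq} and $\partial \mathcal{T}/\partial s = \kappa \mathcal{N}$, $\partial \mathcal{N}/\partial s = -\kappa\mathcal{T}$, and then inserting $\int_{\gamma(t)}\kappa\,ds = -(\psi_++\psi_-)$, which is exactly the content of assumption \eqref{as3} propagated in time (note $\int \kappa\,ds = \Theta(1,t)-\Theta(-1,t)$ is determined by the contact-angle conditions \eqref{bc2}, hence constant). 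This simultaneously yields \eqref{deri-leng}. Then \eqref{eq-Theta} comes from $\partial_t \mathcal{T} = (\partial_t \Theta)(-\sin\Theta,\cos\Theta) = (\partial_t\Theta)\mathcal{N}$; differentiating \eqref{tau-eq} in $s$ and projecting onto $\mathcal{N}$ gives $\partial_t \Theta = \kappa_s + \alpha\kappa$.

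Next I would obtain \eqref{k-s-eq} by differentiating \eqref{deri-theta}, i.e.\ $\kappa = \partial_s \Theta$, in $t$: using the commutator above and \eqref{eq-Theta},
\[
\kappa_t = \partial_t \partial_s \Theta = \partial_s \partial_t \Theta + \left\{\left(\kappa + \tfrac{\psi_++\psi_-}{L}\right)\kappa - \alpha_s\right\}\partial_s\Theta = \partial_s(\kappa_s + \alpha\kappa) + \left\{\left(\kappa+\tfrac{\psi_++\psi_-}{L}\right)\kappa - \alpha_s\right\}\kappa,
\]
and expanding $\partial_s(\alpha\kappa) = \alpha_s\kappa + \alpha\kappa_s$ the $\alpha_s\kappa$ terms cancel, leaving $\kappa_t = \kappa_{ss} + \alpha\kappa_s + \kappa^2(\kappa + (\psi_++\psi_-)/L)$, as claimed.

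For the boundary identity \eqref{bdry-ti-V}, the key is that the endpoints stay on the $x$-axis, so $\langle \partial_t X, \vec e_2\rangle = 0$ at $p = \pm 1$, while the contact-angle conditions \eqref{bc2} fix $\mathcal{N}$ and $\mathcal{T}$ there: at $p=1$ one has $\mathcal{N}(1,t) = (\sin\psi_+,\cos\psi_+)$ and $\mathcal{T}(1,t) = (\cos\psi_+,-\sin\psi_+)$ (consistent with $\Theta(1,t) = -\psi_+$), and symmetrically at $p=-1$. Taking the $\vec e_2$-component of \eqref{tau-eq} and setting it to zero gives $(\kappa + (\psi_++\psi_-)/L)\cos\psi_+ - \alpha\sin\psi_+ = 0$ at $p=1$, hence $\alpha(1,t) = \cot\psi_+\,(\kappa(1,t) + (\psi_++\psi_-)/L(t))$, and likewise $\alpha(-1,t) = -\cot\psi_-\,(\kappa(-1,t)+(\psi_++\psi_-)/L(t))$ at $p=-1$.

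The main obstacle is bookkeeping rather than conceptual: one must be careful with sign conventions for $\mathcal{N}$, $\mathcal{T}$, the orientation of $s$, and the relation between $\Theta$ and $\psi_\pm$ at the boundary, and one must justify that $\int_{\gamma(t)}\kappa\,ds \equiv -(\psi_++\psi_-)$ persists for $t>0$ (it does, since this integral equals the net turning angle $\Theta(1,t)-\Theta(-1,t)$, which is pinned by \eqref{bc2}); getting that constant right is what makes the lower-order term in \eqref{k-s-eq} and \eqref{deri-leng} appear with $(\psi_++\psi_-)/L(t)$ in place of the nonlocal average $\int\kappa\,ds/L(t)$.
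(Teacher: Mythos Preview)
Your proposal is correct and follows essentially the same approach as the paper: the paper simply cites \cite[Section 1.3]{CZ} for the standard evolution identities \eqref{k-s-eq}, \eqref{deri-leng}, \eqref{eq-Theta} (which you spell out via the commutator and Frenet--Serret formulas), and for \eqref{bdry-ti-V} it gives exactly your argument of reading off the vanishing $y$-component of $\partial_t X$ at $p=\pm1$ using the contact-angle data. Your derivation is more self-contained but not methodologically different.
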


\begin{proof}
By virtue of the assumption \eqref{as3}, $-\int_{\gamma(t)} \kappa \; ds = \psi_+ + \psi_-$ holds for $t \in [0, T)$. 
Therefore, \eqref{tau-eq} can be re-formulated as 
\begin{equation}\label{tau-eq2} 
\dfrac{\partial X}{\partial t} = \left(\kappa + \dfrac{\psi_+ + \psi_-}{L(t)} \right) \mathcal{N} + \alpha \mathcal{T}. 
\end{equation}
For the equalities \eqref{k-s-eq}, \eqref{deri-leng} and \eqref{eq-Theta}, we refer to \cite[Section 1.3]{CZ}. 
We prove only \eqref{bdry-ti-V}. 
By differentiating \eqref{bc1} with respect to $t$, we see that $\frac{\partial y}{\partial t} (\pm1, t) = 0$. 
Furthermore, by virtue of \eqref{bc2} and the choice of the direction of $\mathcal{N}$, we may see that the $y$-coordinate of $\mathcal{N}$ and $\mathcal{T}$ are $\cos \psi_\pm$ and $\mp\sin \psi_\pm$ at $p=\pm1$, respectively. 
Therefore, the equality \eqref{bdry-ti-V} follows from the $y$-coordinate of \eqref{tau-eq2}. 
\end{proof}

Using \eqref{bdry-ti-V} and \eqref{deri-leng}, we can calculate the derivative of the length $L(t)$ and the $x$-coordinate of the left endpoint $x(-1,t)$. 

\begin{lem}
Assume a smooth solution \eqref{reg-sol} to \eqref{apcf}--\eqref{bc2} with the initial assumptions \eqref{as1}--\eqref{as3} exists in some time interval $[0,T)$. 
Then, the length $L(t)$ of $\gamma(t)$ and the $x$-coordinate of the left endpoint $x(-1,t)$ satisfy 
\begin{align}\label{deri-ene}
&\begin{aligned}
\dfrac{d}{dt}L(t) =& \; \cot \psi_+ \left(\kappa(1,t) + \dfrac{\psi_+ + \psi_-}{L(t)}\right) + \cot \psi_- \left(\kappa(-1,t) + \dfrac{\psi_+ + \psi_-}{L(t)}\right) \\
&\; + \dfrac{(\psi_++\psi_-)^2}{L(t)} - \int_{\gamma(t)} \kappa^2 \; ds, 
\end{aligned} \\
&\dfrac{\partial x}{\partial t}(-1,t) = - \dfrac{1}{\sin\psi_-} \left(\kappa(-1,t) + \dfrac{\psi_+ + \psi_-}{L(t)} \right). \label{deri-x} 
\end{align}
\end{lem}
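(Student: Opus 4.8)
The plan is to derive both identities by direct differentiation, feeding in the evolution equations \eqref{deri-leng} and \eqref{tau-eq2} and the boundary relation \eqref{bdry-ti-V} from the preceding lemma, together with the conservation identity $\int_{\gamma(t)}\kappa\,ds = -(\psi_+ + \psi_-)$ coming from \eqref{as3}.

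For the first identity, I would start from $L(t) = \int_{-1}^{1}\upsilon(p,t)\,dp$. The regularity in \eqref{reg-sol} and the smoothness of the flow on $(-1,1)\times(0,T)$ justify differentiating under the integral sign, so \eqref{deri-leng} gives
\[
\frac{d}{dt}L(t) = \int_{\gamma(t)}\left\{-\left(\kappa + \frac{\psi_+ + \psi_-}{L(t)}\right)\kappa + \alpha_s\right\} ds.
\]
Here $\int_{\gamma(t)}\alpha_s\,ds = \alpha(1,t) - \alpha(-1,t)$ by \eqref{deri-p-s} and the fundamental theorem of calculus, and \eqref{bdry-ti-V} converts this into the two cotangent terms appearing in \eqref{deri-ene}. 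For the remaining integrand I would split $-\int_{\gamma(t)}(\kappa + \frac{\psi_+ + \psi_-}{L(t)})\kappa\,ds$ into $-\int_{\gamma(t)}\kappa^2\,ds$ and $-\frac{\psi_+ + \psi_-}{L(t)}\int_{\gamma(t)}\kappa\,ds$, the latter equalling $\frac{(\psi_+ + \psi_-)^2}{L(t)}$ by \eqref{as3}. Summing the pieces yields \eqref{deri-ene}.

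For the second identity, I would evaluate the $x$-component of \eqref{tau-eq2} at $p = -1$. At that endpoint $\mathcal{N}(-1,t) = (-\sin\psi_-,\cos\psi_-)$ by \eqref{bc2}, and since $\mathcal{T}$ is the clockwise quarter-turn of $\mathcal{N}$ one gets $\mathcal{T}(-1,t) = (\cos\psi_-,\sin\psi_-)$; hence
\[
\frac{\partial x}{\partial t}(-1,t) = -\left(\kappa(-1,t) + \frac{\psi_+ + \psi_-}{L(t)}\right)\sin\psi_- + \alpha(-1,t)\cos\psi_-.
\]
Substituting $\alpha(-1,t) = -\cot\psi_-(\kappa(-1,t) + \frac{\psi_+ + \psi_-}{L(t)})$ from \eqref{bdry-ti-V} and using $\sin\psi_- + \cos^2\psi_-/\sin\psi_- = 1/\sin\psi_-$ gives \eqref{deri-x}. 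Neither computation hides a real difficulty; the only steps needing a little care are the passage of $d/dt$ inside the length integral (covered by the regularity assumed in \eqref{reg-sol}) and the orientation bookkeeping for $\mathcal{N}$ and $\mathcal{T}$ at the endpoint when extracting the $x$-component.
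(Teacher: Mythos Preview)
Your argument is correct and follows essentially the same route as the paper: differentiate $L(t)=\int_{-1}^1\upsilon\,dp$ via \eqref{deri-leng}, evaluate the $\alpha_s$ integral with \eqref{bdry-ti-V}, and use $-\int_{\gamma(t)}\kappa\,ds=\psi_++\psi_-$; for \eqref{deri-x} the paper likewise reads off the $x$-component of \eqref{tau-eq2} at $p=-1$ and substitutes \eqref{bdry-ti-V}. The only cosmetic difference is that the paper records the $x$-coordinates of $\mathcal{N}$ and $\mathcal{T}$ directly rather than phrasing it as a quarter-turn, but the computation is identical.
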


\begin{proof}
First, we prove \eqref{deri-ene}. 
By virtue of \eqref{deri-leng}, we have 
\[ \dfrac{d}{dt}L(t) = \dfrac{d}{dt} \int_{-1}^1 \upsilon \; dp = \int_{-1}^1 \dfrac{\partial \upsilon}{\partial t} \; dp = \int_{\gamma(t)} - \left(\kappa + \dfrac{\psi_+ + \psi_-}{L(t)} \right)\kappa + \alpha_s \; ds. \]
Since $-\int_{\gamma(t)} \kappa \; ds = \psi_+ + \psi_-$ follows from the assumption \eqref{as3}, we obtain \eqref{deri-ene} by substituting \eqref{bdry-ti-V} into the above equality. 

The equality \eqref{deri-x} follows from \eqref{bdry-ti-V} and the $x$-coordinate of \eqref{tau-eq2} since the $x$-coordinate of $\mathcal{N}$ and $\mathcal{T}$ are $-\sin\psi_-$ and $\cos\psi_-$ at $p=-1$, respectively. 
\end{proof}

We now use a parameter $p$ satisfying \eqref{restrict-para} to determine $\alpha$. 

\begin{lem}
Assume a smooth solution \eqref{reg-sol} to \eqref{apcf}--\eqref{bc2} with the initial assumptions \eqref{as1}--\eqref{as3} exists in some time interval $[0,T)$. 
Let the parameter $p$ of $X$ be chosen to satisfy \eqref{restrict-para}. 
Then, $\alpha$ in \eqref{tau-eq} is given by 
\begin{equation}\label{alpha-p}
\alpha(p,t) = \int_{-1}^p \dfrac{2}{L(t) \kappa} \left(\kappa + \dfrac{\psi_++\psi_-}{L(t)} \right) + \dfrac{L'(t)}{2} \; dp - \cot \psi_- \left(\kappa(-1,t) + \dfrac{\psi_+ + \psi_-}{L(t)} \right) 
\end{equation}
for $(p,t) \in [-1,1] \times (0,T)$, where $L'(t)$ is the derivative of $L(t)$ with respect to $t$. 
\end{lem}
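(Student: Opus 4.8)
The plan is to derive the formula for $\alpha$ directly from the constraint \eqref{restrict-para} by differentiating it in time and solving the resulting ODE in $p$. Since \eqref{restrict-para} asserts $\upsilon(p,t) = L(t)/2$ for all $p \in [-1,1]$, differentiating in $t$ gives $\partial_t \upsilon(p,t) = L'(t)/2$, which is independent of $p$. On the other hand, Equation \eqref{deri-leng} expresses $\partial_t \upsilon$ in terms of $\alpha_s$ and the curvature. Combining these two expressions yields, after using \eqref{deri-p-s} to rewrite $\alpha_s = \upsilon^{-1}\alpha_p = (2/L(t))\,\alpha_p$,
\begin{equation*}
\frac{2}{L(t)}\,\alpha_p = \frac{L'(t)}{2} + \left(\kappa + \frac{\psi_+ + \psi_-}{L(t)}\right)\kappa,
\end{equation*}
so that $\alpha_p(p,t) = \frac{L(t)}{2}\left[\frac{L'(t)}{2} + \left(\kappa + \frac{\psi_+ + \psi_-}{L(t)}\right)\kappa\right]$. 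Hmm — comparing with the claimed \eqref{alpha-p}, the integrand there reads $\frac{2}{L(t)\kappa}(\kappa + \frac{\psi_++\psi_-}{L(t)}) + \frac{L'(t)}{2}$, so one should double-check the exact algebraic form after substituting; in any case the structure is: $\alpha_p$ is an explicit function of $\kappa$, $L$, $L'$, and the ODE in $p$ is solved by integration from $p = -1$.

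The second ingredient is the initial condition at $p = -1$. This comes from the boundary relation \eqref{bdry-ti-V}, which at $p = -1$ gives precisely
\begin{equation*}
\alpha(-1,t) = -\cot\psi_-\left(\kappa(-1,t) + \frac{\psi_+ + \psi_-}{L(t)}\right).
\end{equation*}
Therefore I would write $\alpha(p,t) = \alpha(-1,t) + \int_{-1}^p \alpha_p(p',t)\,dp'$, substitute the two displayed expressions above, and simplify to reach \eqref{alpha-p}. Finally, one should verify consistency at the right endpoint: integrating $\alpha_p$ all the way to $p = 1$ must reproduce $\alpha(1,t) = +\cot\psi_+\left(\kappa(1,t) + \frac{\psi_+ + \psi_-}{L(t)}\right)$ from \eqref{bdry-ti-V}; this is not automatic and amounts to the compatibility condition $\frac{d}{dt}L(t) = \int_{-1}^1 \partial_t \upsilon\,dp$, i.e. exactly the length evolution formula \eqref{deri-ene}. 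So the self-consistency of the ansatz is guaranteed by \eqref{deri-ene}, which was already established.

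The main (and only genuine) obstacle here is bookkeeping: getting the algebra in the integrand of \eqref{alpha-p} exactly right, being careful that $\upsilon = L(t)/2$ converts $\partial_s$ to $\frac{2}{L(t)}\partial_p$ and hence $ds = \frac{L(t)}{2}\,dp$, and tracking the signs coming from the orientation conventions for $\mathcal{N}$ and $\mathcal{T}$ at the two endpoints. There is no analytic difficulty: everything reduces to one time-differentiation of the constraint, one substitution from \eqref{deri-leng}, and one integration in $p$ with the endpoint value supplied by \eqref{bdry-ti-V}.
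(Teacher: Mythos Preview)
Your approach is correct and essentially identical to the paper's: differentiate the constraint \eqref{restrict-para} in time, combine with the evolution of the length element to obtain an explicit expression for $\alpha_p$, then integrate from $p=-1$ using the boundary value \eqref{bdry-ti-V}. The only cosmetic difference is that the paper differentiates $|X_p|^2=(L(t)/2)^2$ directly and applies Frenet--Serret to recompute $\langle X_{tp},X_p\rangle$, whereas you invoke the already-established identity \eqref{deri-leng}; both routes yield $\alpha_p=\tfrac{1}{2}\bigl(L(t)\kappa V+L'(t)\bigr)$ with $V=\kappa+(\psi_++\psi_-)/L(t)$.

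One remark on your ``Hmm'': your confusion is justified. The integrand printed in \eqref{alpha-p}, namely $\tfrac{2}{L(t)\kappa}\bigl(\kappa+\tfrac{\psi_++\psi_-}{L(t)}\bigr)+\tfrac{L'(t)}{2}$, does not match what one gets by integrating $\alpha_p=\tfrac{1}{2}(L(t)\kappa V+L'(t))$; the paper's own equation \eqref{alpha-p-1} in the proof gives the latter, so the factor $\tfrac{2}{L(t)\kappa}$ in the statement appears to be a typo for $\tfrac{L(t)\kappa}{2}$. Your derivation is the correct one; the displayed equation you wrote just before the ``Hmm'' also has a small slip ($L'(t)/2$ should be $L'(t)/L(t)$ on the right-hand side before multiplying through), but you flagged it yourself and the structure of the argument is sound.
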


\begin{proof}
Let $\langle \cdot, \cdot \rangle$ be the Euclidean inner product defined on $\mathbb{R}^2$ and $V$ denote by the normal velocity $\kappa + (\psi_+ + \psi_-)/L(t)$ for simplicity. 
Differentiating square of the both side of \eqref{restrict-para}, we obtain 
\begin{equation*}
\begin{aligned}
\dfrac{L(t)L'(t)}{2} =&\; 2\langle X_{pt}, X_p \rangle = 2 \langle (V\mathcal{N} + \alpha \mathcal{T})_p, X_p \rangle \\
=&\; \dfrac{(L(t))^2}{2}\langle (V\mathcal{N} + \alpha \mathcal{T})_s, \mathcal{T} \rangle = \dfrac{(L(t))^2}{2} (\alpha_s - \kappa V). 
\end{aligned}
\end{equation*}
Here, the relation \eqref{deri-p-s} and the Frenet-Serret formula $\mathcal{N}_s = - \kappa \mathcal{T}$ have used. 
This implies 
\begin{equation}\label{alpha-p-1}
\alpha_p = \dfrac{L(t) \kappa V + L'(t)}{2}. 
\end{equation}
Integrating \eqref{alpha-p-1} on the interval $[-1,p]$ and substituting \eqref{bdry-ti-V}, we have \eqref{alpha-p}. 
\end{proof}

Now use the transformations 
\begin{equation}\label{trans} 
z := \dfrac{s+1}{2}, \quad \tau := \int_0^t \dfrac{d\tilde{t}}{(L(\tilde{t}))^2} 
\end{equation}
and let $v(z,\tau) := \Theta(p,t), \eta(\tau) := \log L(t)$. 
Then, by \eqref{deri-theta}, \eqref{eq-Theta}, \eqref{deri-ene} and \eqref{alpha-p}, we see that $v$ and $\eta$ satisfy 
\begin{equation}\label{normal-eq}
\begin{cases}
v_\tau = v_{zz} + (P(z,\tau) + Q(\tau) z) v_z & \mbox{for} \quad 0 < z < 1, \; \; \tau > 0, \\
\eta'(\tau) = Q(\tau) & \mbox{for} \quad \tau >0, \\
v(0,\tau) = \psi_-, \quad v(1,\tau) = -\psi_- & \mbox{for} \quad \tau>0, 
\end{cases}
\end{equation}
where 
\begin{align*}
& P(z,\tau) = (\psi_+ + \psi_-) (v - \psi_- - \cos \psi_-) - v_z(0,\tau) \cot\psi_- + \int_0^z v_z^2 \; dz, \\
& Q(\tau) = \cot\psi_+ (v_z(1,\tau) + \psi_+ + \psi_-) + \cot \psi_- (v_z(0,\tau) + \psi_+ + \psi_-) + (\psi_+ + \psi_-)^2 - \int_0^1 v_z^2 \; dz. 
\end{align*}
We have applied \eqref{bc2}, a choice of the direction for $\mathcal{N}$, and the definition of $\Theta$ \eqref{def-angle-fun} to introduce the boundary condition of \eqref{normal-eq}. 
If we obtain a solution $(v,\eta)$ to \eqref{normal-eq}, then we can construct an angle function $\Theta$ and a length $L(t)$ by inverting the transformations \eqref{trans}. 
Hence a solution to \eqref{apcf}--\eqref{bc2} can be constructed from $\Theta$, $L(t)$ and the left endpoint $x(-1,0)$ as 
\begin{equation}\label{parametrization} \begin{aligned}
&x(p,t) = x(-1,0) -\dfrac{1}{\sin \psi_-} \int_0^t \dfrac{2}{L(\tilde{t})}\Theta_p(-1,\tilde{t}) + \dfrac{\psi_+ + \psi_-}{L(\tilde{t})} \; d\tilde{t} + \int_{-1}^p \dfrac{L(t)}{2} \cos\Theta(\tilde{p},t) \; d\tilde{p}, \\
&y(p,t) = \int_{-1}^p \dfrac{L(t)}{2} \sin \Theta(\tilde{p}, t) \; d\tilde{p}. 
\end{aligned} \end{equation}
This formulation is obtained by integrating \eqref{def-angle-fun} and \eqref{deri-x} with respect to $s$ and $t$, respectively. 
Thus, it is enough to study the short time existence of solutions to \eqref{normal-eq} to prove Theorem \ref{thm:main1}. 
Equation \eqref{normal-eq} has already been studied in \cite{SK}, where analytic semigroup theory was applied  to obtain a local-in-time solution and show its uniqueness. 
In the present paper, we mention some details of the statements of existence, and refer to \cite{GMSW, SK} for the proof. 

\begin{lem}\label{short-v}
Assume $v_0 \in C^{\alpha+\varepsilon} ([0,1])$ for some $\alpha \in (1/2,1), \varepsilon \in (0, 1-\alpha)$ and 
\begin{equation}\label{ini-v}
v_0(0) = \psi_-, \quad v_0(1) = -\psi_+. 
\end{equation}
Suppose that $v_0$ and $\eta_0 \in \mathbb{R}$ satisfy 
\[ \| v_0\|_{C^\alpha([0,1])} + \eta_0 \le N \]
for some constant $N > 0$. 
Then there exist a positive constant $T_1$ depending only on $N$ such that a unique classical solution 
\[ v \in C^{\alpha, \alpha/2}([0,1] \times [0,T_1]) \cap C^\infty([0,1] \times (0,T_1]), \quad \eta \in C([0,T_1]) \cap C^\infty((0,T_1]) \]
to \eqref{normal-eq} with $v(\cdot,0) = v_0$ and $\eta(0) = \eta_0$ exists. 
Furthermore, the following estimates hold. 
\begin{itemize}
\item[(i)] There exists a positive constant $N_1$ depending only on $N$ such that 
\[ \|v(\cdot,\tau)\|_{C^\alpha([0,1])} + |\eta(\tau)| \le N_1, \quad \tau^{\frac{1-\alpha}{2}} \|v(\cdot,\tau)\|_{C^1([0,1])} \le N_1 \quad \mbox{for} \quad \tau \in [0,T_1]. \]
\item[(ii)] For any $T_0 \in (0,T_1)$ and $k \in \mathbb{N}$, there exists a positive constant $N_2$ depending only on $N, k$ and $T_0$ such that 
\[ \|v\|_{C^{k+\alpha, (k+\alpha)/2}([0,1] \times [T_0, T_1])} + \|\eta\|_{C^{(k + \alpha)/2}([T_0,T_1])} \le N_2. \]
\item[(iii)] If $(v,\eta), (\tilde{v}, \tilde{\eta})$ are solutions with initial data $(v_0, \eta_0), (\tilde{v}_0, \tilde{\eta}_0)$ satisfying \eqref{ini-v}, respectively, then there exists a positive constant $N_3$ depending only on $N$ such that for any $\tau \in (0, T_1]$, 
\[ \begin{aligned}
&\| v(\cdot, \tau) - \tilde{v}(\cdot,\tau)\|_{C^\alpha([0,1])} + \tau^{\frac{1-\alpha}{2}} \|v_y(\cdot,\tau) - \tilde{v}_y(\cdot,\tau)\|_{C([0,1])} + |\eta(\tau) - \tilde{\eta}(\tau)| \\
&\le N_3 (\|v_0 - \tilde{v}_0\|_{C^\alpha([0,1])} + |\eta_0 - \tilde{\eta}_0|). 
\end{aligned} \]
\end{itemize}
\end{lem}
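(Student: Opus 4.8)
The plan is to reduce \eqref{normal-eq} to an abstract quasilinear parabolic problem on a fixed interval and invoke the maximal-regularity / analytic semigroup machinery, exactly as in \cite{GMSW, SK}. First I would set up the functional-analytic framework: work in the Banach space $X_0 = C([0,1])$ with the operator $A_0 u = u_{zz}$ equipped with the inhomogeneous Dirichlet boundary conditions $u(0)=\psi_-$, $u(1)=-\psi_+$, and recall that $-A_0$ generates an analytic semigroup on $X_0$, with domain $X_1 = \{u \in C^2([0,1]) : u(0)=u(1)=0\}$ after subtracting a fixed affine function carrying the boundary data. The continuous interpolation spaces $(X_0,X_1)_{\theta,\infty}$ are the little-Hölder spaces $h^{2\theta}$; choosing $2\theta = \alpha \in (1/2,1)$ puts $v_0$ (after the boundary reduction) in the correct intermediate space, and the condition $\alpha > 1/2$ is what makes the first-order coefficient term act as a lower-order perturbation with the right smoothing exponent $\tfrac{1-\alpha}{2}$.

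The second step is to treat the nonlinearity. The coefficient $P(z,\tau)+Q(\tau)z$ depends on $v$ only through $v$ itself and the boundary traces $v_z(0,\tau)$, $v_z(1,\tau)$, together with the zeroth-order quantity $\int_0^z v_z^2\,dz$; the $\eta$-equation is a pure quadrature once $Q$ is known. Thus I would regard the right-hand side as $F(v) := f(v,v_z)\,v_z$ where $f$ is locally Lipschitz from $C^1([0,1])$ into $C([0,1])$ — note that the boundary traces $v_z(0),v_z(1)$ and the integral term are all continuous and locally Lipschitz on $C^1([0,1])$ — so $v \mapsto F(v)$ is locally Lipschitz from $C^1$ to $C^0$. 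A standard fixed-point argument (Picard iteration in the space of functions $v$ with $\sup_\tau \|v(\cdot,\tau)\|_{C^\alpha} + \sup_\tau \tau^{(1-\alpha)/2}\|v(\cdot,\tau)\|_{C^1} \le 2N_1$, using the semigroup estimates $\|e^{-\tau A_0}\|_{\mathcal{L}(X_0)} \le C$, $\|e^{-\tau A_0}\|_{\mathcal{L}(X_0,C^1)} \le C\tau^{-1/2}$, and the mapping properties of the mild formulation $v(\tau) = e^{-\tau A_0}v_0 + \int_0^\tau e^{-(\tau-\sigma)A_0}F(v(\sigma))\,d\sigma$) yields a unique local mild solution on $[0,T_1]$ with $T_1 = T_1(N)$, and simultaneously gives estimate (i). Estimate (iii) follows from the same contraction estimate applied to the difference of two solutions, using the local Lipschitz bound on $F$ restricted to the ball of radius $2N_1$.

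The third step is the bootstrap to smoothness in the interior, giving $v \in C^\infty([0,1]\times(0,T_1])$ and estimate (ii). Here I would use parabolic Schauder theory: once $v \in C^{\alpha,\alpha/2}$ with $v_z$ bounded on $[T_0/2, T_1]$, the coefficient $P + Qz$ is itself in $C^{\alpha,\alpha/2}$ there (it is built from $v$, $v_z$ traces and $\int v_z^2$, all of which inherit Hölder regularity), so linear Schauder estimates upgrade $v$ to $C^{2+\alpha,1+\alpha/2}$; iterating, differentiating the equation, and using the compatibility of the constant boundary data promotes $v$ to $C^{k+\alpha,(k+\alpha)/2}$ for every $k$, with constants depending only on $N,k,T_0$ via the estimate in (i); the bound on $\|\eta\|_{C^{(k+\alpha)/2}}$ then comes from $\eta' = Q$ and the regularity of $Q$. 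I would also record that $\eta \in C([0,T_1])$ since $Q$ is bounded on $[0,T_1]$ by (i).

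The main obstacle I anticipate is the low regularity of the boundary-trace terms $v_z(0,\tau)$ and $v_z(1,\tau)$ appearing inside the coefficient $P(z,\tau)+Q(\tau)z$: these are only controlled by the $C^1$-norm of $v$, which blows up like $\tau^{-(1-\alpha)/2}$ as $\tau \downarrow 0$, so the coefficient is merely in a weighted space near $\tau = 0$ and one must check that $\int_0^\tau (\tau-\sigma)^{-1/2}\sigma^{-(1-\alpha)/2}\,d\sigma$ stays bounded — which is exactly where $\alpha > 1/2$ is used — and then carefully track this weight through the fixed-point and the difference estimate. Since this is precisely the estimate carried out in \cite[Section 2]{SK} (itself modeled on \cite{GMSW}), I would state the framework, indicate the weighted-norm contraction, and refer to those papers for the detailed constants.
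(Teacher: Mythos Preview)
Your proposal is correct and matches the paper's treatment: the paper does not give a self-contained proof of this lemma but explicitly refers to \cite{GMSW, SK}, stating that the analytic semigroup framework developed there applies verbatim to \eqref{normal-eq}. Your outline---reducing to an abstract semilinear problem with $A_0 = \partial_{zz}$ on little-H\"older spaces, running a weighted fixed-point argument for (i) and (iii), and bootstrapping via Schauder for (ii), with the final reference to \cite{SK} for the detailed constants---is precisely that approach, and in fact spells out more of the mechanism than the paper itself does.
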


\begin{proof}[Proof of Theorem \ref{thm:main1}]
We re-parametrize $X(\cdot,0)$ so that \eqref{restrict-para} with $t=0$ holds if necessary. 
From \eqref{as2}, the function $v(\cdot,0)$ constructed by the definition of the angle function \eqref{def-angle-fun} and the transformations \eqref{trans} satisfies $v(\cdot,0) \in C^1([0,1])$ and \eqref{ini-v}. 
Therefore, we can obtain a unique solution $(v,\eta)$ to \eqref{normal-eq} starting from $(v(\cdot,0), \log L(0))$ by applying Lemma \ref{short-v}. 
A classical solution $X$ to \eqref{apcf}--\eqref{bc2} satisfying \eqref{restrict-para} can be constructed from $(v,\eta)$ and $x(-1,0)$ by using the inverse of the transformations \eqref{trans} and parametrization \eqref{parametrization}, and we can see that the solution satisfies the regularity \eqref{reg-sol}. 
The uniqueness of $X$ follows from the uniqueness of $(v,\eta)$ obtained in Lemma \ref{short-v}. 
\end{proof}

\begin{rk}\label{rk:ene}
Using a similar argument for \eqref{deri-x}, we can obtain 
\begin{equation}\label{deri-x2}
\dfrac{\partial x}{\partial t} (1,t) = \dfrac{1}{\sin\psi_+} \left(\kappa(1,t) + \dfrac{\psi_+ + \psi_-}{L(t)}\right). 
\end{equation}
Substituting \eqref{deri-x} and \eqref{deri-x2} into \eqref{deri-ene}, we have
\[ \begin{aligned}
\dfrac{d}{dt} \left(L(t) - x(1,t) \cos \psi_+ + x(-1,t) \cos\psi_- \right) &\; = \dfrac{(\psi_+ + \psi_-)^2}{L(t)} - \int_{\gamma(t)} \kappa^2 \; ds \\
&\; \left( = - \int_{\gamma(t)} \left(\kappa + \dfrac{\psi_+ + \psi_-}{L(t)} \right)^2 \; ds \right). 
\end{aligned}\]
Therefore, the energy functional \eqref{def-ene} is a non-increasing function in time $t$. 
Furthermore, the area $A(t)$ defined by \eqref{def-area} is preserved with respect to time $t$ (see Lemma \ref{lem:area-pre}). 
As mentioned in the introduction, these structures indicate that a global-in-time solution to \eqref{apcf}--\eqref{bc2} converges to a minimizer of the following problem: 
\[ \min \{E(\gamma) : \gamma \; \mbox{satisfies the following properties (i) and (ii)} \}. \]
\begin{itemize}
\item[(i)] $\gamma$ is a simple curve having two endpoints on the $x$-axis. 
\item[(ii)] The area $A_\gamma$ defined by \eqref{def-area2} equals some constant $A > 0$. 
\end{itemize}
However, if we assume the existence of a minimizer (and also a critical point of $E$ under the area restriction), then the curvature of the curve should be constant and the curve will generate the interior contact angles $\psi_\pm$ on the endpoints. 
Since the contact angles at the intersections of a circle and a line should be the same, we obtain a contradiction if we assume $\psi_+ \neq \psi_-$. 
Therefore, there is no minimizer (and also no critical point) if $\psi_+ \neq \psi_-$. 

We can also see the nonexistence of minimizers (and also critical points) and the unboundedness of $E$ from a different perspective. 
Fix a curve $\gamma$ satisfying (i) and (ii). 
Then, the translated curves $\gamma + t \vec{e}_1$ also satisfy (i) and (ii) for any $t \in \mathbb{R}$, where $\vec{e}_1 = (1,0)$. 
It is easy to see that 
\[\begin{aligned} 
\dfrac{d}{dt} E(\gamma + t\vec{e}_1) =&\; \dfrac{d}{dt} (L(\gamma + t\vec{e}_1) - x_+(\gamma + t\vec{e}_1) \cos \psi_+ + x_-(\gamma + t\vec{e}_1) \cos \psi_-) \\
=&\; \cos \psi_- - \cos \psi_+. 
\end{aligned}\]
Therefore, the energy functional $E$ is changes linearly with respect to the translation of $\gamma$ if $\psi_+ \neq \psi_-$. 
This shows the nonexistence of minimizers (and also critical points) and the unboundedness of $E$ when $\psi_+ \neq \psi_-$. 
\end{rk}

\section{Preliminaries}\label{sec:pre} 

This section lists some necessary estimates and known results for proving the convergence of a global-in-time solution of \eqref{apcf}--\eqref{bc2} for a traveling wave. 

\subsection{Uniformly negativity of the curvature}

In this section, we study the preservation of area and concavity. 
First, we prove that the area $A(t)$ is preserved. 

\begin{lem}\label{lem:area-pre}
Assume \eqref{as1}--{\rm (A4)} and let $X$ be the solution to \eqref{apcf}--\eqref{bc2} obtained by Theorem \ref{thm:main1}. 
Then, 
\[
A(t) = A(0) \quad \mbox{for} \quad t \in (0,T). 
\]
\end{lem}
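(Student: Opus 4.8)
The plan is to show $\frac{d}{dt}A(t) = 0$ by differentiating the definition $A(t) = \tfrac12 \int_{\gamma(t)} \langle X, \mathcal{N} \rangle \, ds$ directly, using the evolution law \eqref{tau-eq2} and the boundary conditions \eqref{bc1}, \eqref{bc2}. First I would rewrite the integral over $\gamma(t)$ as an integral over the fixed parameter interval, $A(t) = \tfrac12 \int_{-1}^{1} \langle X, \mathcal{N} \rangle \, \upsilon \, dp$, so that differentiation in $t$ passes under the integral sign. Then I would compute $\partial_t \langle X, \mathcal{N} \rangle$ and use \eqref{deri-leng} for $\partial_t \upsilon$; the term $\langle X_t, \mathcal{N} \rangle = V = \kappa + (\psi_+ + \psi_-)/L(t)$ is the normal velocity, while $\langle X, \mathcal{N}_t \rangle$ must be handled via the Frenet frame. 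The standard fact here (cf.\ \cite{CZ}) is that $\mathcal{N}_t = -(\partial_s V + \alpha \kappa)\mathcal{T}$ along a flow of the form \eqref{tau-eq2}, using $\partial_t \Theta = \kappa_s + \alpha \kappa$ from \eqref{eq-Theta}; so $\langle X, \mathcal{N}_t \rangle = -(\partial_s V + \alpha\kappa)\langle X, \mathcal{T}\rangle$.

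Assembling these, after using $\partial_t(ds) = (\alpha_s - \kappa V)\,ds$ one gets an expression of the form $\frac{d}{dt}A(t) = \tfrac12 \int_{\gamma(t)} \big( V - \partial_s(\langle X,\mathcal{T}\rangle)\,(\text{something}) + \dots \big)\,ds$ that should, after an integration by parts in $s$, collapse to $\frac{d}{dt}A(t) = \int_{\gamma(t)} V \, ds + (\text{boundary terms})$, or more precisely to the genuinely geometric statement that the rate of area change equals $\int_{\gamma(t)} V\, ds$ plus the contribution from the endpoints sliding along the $x$-axis. Since $V = \kappa + (\psi_+ + \psi_-)/L(t)$ and \eqref{as3} gives $\int_{\gamma(t)} \kappa \, ds = -(\psi_+ + \psi_-)$, the bulk term $\int_{\gamma(t)} V \, ds = -(\psi_+ + \psi_-) + L(t)\cdot\frac{\psi_+ + \psi_-}{L(t)} = 0$ vanishes. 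Alternatively, and perhaps more cleanly, I would note that $A(t)$ differs from the region enclosed by $\gamma(t)$ together with the segment of the $x$-axis between the endpoints only by the (zero-area, since it lies on a line) contribution of that segment, so $A(t)$ equals a genuine enclosed-area functional; its time derivative is then, by the transport/divergence theorem, exactly the signed flux $\int_{\gamma(t)} V\, ds$ through the moving curve plus $\int$ of normal velocity along the fixed $x$-axis portion — the latter is zero because \eqref{bc1} pins the endpoints to $y=0$ and the $x$-axis itself does not move — and the former vanishes as above.

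The endpoint bookkeeping is the one place to be careful: when I integrate by parts in $s$ over $[0, L(t)]$ the boundary terms at $s = 0, L(t)$ involve $\langle X, \mathcal{T} \rangle$ and the tangential speed $\alpha(\pm 1, t)$, and I must check these cancel against the contribution of the endpoints' horizontal motion $\partial_t x(\pm 1, t)$ recorded in \eqref{deri-x}, \eqref{deri-x2}. Because $y(\pm 1, t) = 0$, the endpoint velocity is purely horizontal, and the geometry of the contact angles (the $x$-component of $\mathcal{N}$ is $\mp \sin\psi_\mp$, that of $\mathcal{T}$ is $\cos\psi_\mp$) makes the algebra line up; I expect this to reduce to an identity already implicit in \eqref{bdry-ti-V}. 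This boundary cancellation, rather than the interior computation, is the main obstacle — it is routine but must be done with the correct signs and the correct orientation of $s$ (measured from $X(-1,t)$ to $X(1,t)$). Once it is in hand, concavity from (A4) and Lemma~\ref{lem:concave} is not actually needed for the \emph{preservation} itself — it is only needed to identify $A(t)$ with the enclosed area when $\gamma(t)$ is simple — so the proof of $A(t) = A(0)$ rests entirely on the differentiation argument plus \eqref{as3}.
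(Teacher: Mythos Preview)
Your approach is correct and essentially identical to the paper's: both differentiate $A(t)=\tfrac12\int_{\gamma(t)}\langle X,\mathcal{N}\rangle\,ds$ directly, use $\mathcal{N}_t=-(\kappa_s+\alpha\kappa)\mathcal{T}$ from \eqref{eq-Theta} together with \eqref{deri-leng} and \eqref{tau-eq2}, integrate by parts to reduce to $\int_{\gamma(t)}V\,ds$ plus boundary terms, and then verify that $\int V\,ds=0$ (from \eqref{as3}) while the boundary contributions cancel via \eqref{bc1}, \eqref{bc2} and \eqref{bdry-ti-V}. Your remark that (A4) plays no role in the preservation argument itself is also accurate.
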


\begin{proof}
Let $V$ be the normal velocity $\kappa + (\psi_+ + \psi_-)/L(t)$ for simplicity. 
From \eqref{eq-Theta}, we obtain 
\begin{equation}\label{pre-area1} 
\dfrac{d}{dt} \mathcal{N} = \dfrac{d}{dt} (-\sin \Theta(p,t), \cos \Theta(p,t)) = -(\kappa_s + \alpha \kappa) \mathcal{T}. 
\end{equation}
Applying \eqref{bc1}, \eqref{bc2}, \eqref{tau-eq}, \eqref{bdry-ti-V}, \eqref{deri-leng}, \eqref{pre-area1}, the Frenet-Serret formulas $\mathcal{T}_s = \kappa \mathcal{N}$, $\mathcal{N}_s = - \kappa \mathcal{T}$ and integration by parts, we see that 
\[\begin{aligned} 
\dfrac{d}{dt} A(t) =&\; \dfrac{d}{dt} \dfrac{1}{2} \int_{\gamma(t)} \langle X, \mathcal{N} \rangle \; ds \\
=&\; \dfrac{1}{2} \int_{\gamma(t)} V - \langle X, (\kappa_s + \alpha \kappa) \mathcal{T} \rangle + \langle X, \mathcal{N} \rangle (- V \kappa + \alpha_s) \; ds \\
=&\; \dfrac{1}{2} \int_{\gamma(t)} V - \langle X, (V \mathcal{T})_s \rangle + \langle X, (\alpha \mathcal{N})_s \rangle \; ds \\
=&\; \int_{\gamma(t)} V \; ds - \dfrac{1}{2}\big\{(\alpha \sin \Theta + V \cos \Theta ) x(p,t)\big\} \Big|_{p=-1}^1 = 0. 
\end{aligned}\]
Therefore, we obtain the conclusion. 
\end{proof}

We will need the following isoperimetric inequality to prove the preservation of concavity. 

\begin{lem}
Assume \eqref{as1}--{\rm (A4)} and let $X$ be the solution to \eqref{apcf}--\eqref{bc2} obtained by Theorem \ref{thm:main1}. 
Then, 
\begin{equation}\label{iso-ine}
L(t) \ge \sqrt{2 \pi A(0)} \quad \mbox{for} \quad t \in [0,T). 
\end{equation}
\end{lem}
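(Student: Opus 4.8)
The plan is to deduce \eqref{iso-ine} from the classical isoperimetric inequality by comparing $\gamma(t)$ with a closed curve. Since we do not know that $\gamma(t)$ is embedded for $t>0$, we cannot directly apply the isoperimetric inequality to the region bounded by $\gamma(t)$ and the $x$-axis. Instead, I would work with the \emph{signed} area $A(t)$, which by Lemma \ref{lem:area-pre} equals the preserved quantity $A(0)$, and estimate it against the length $L(t)$.

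First I would close up $\gamma(t)$ by adding the segment $S(t)$ on the $x$-axis joining the two endpoints $X(-1,t)$ and $X(1,t)$. The resulting closed curve $\widehat{\gamma}(t)=\gamma(t)\cup S(t)$ has total length $L(t)+\ell(t)$, where $\ell(t)=|x(1,t)-x(-1,t)|$ is the length of the segment. Next I would observe that the signed area enclosed by $\widehat{\gamma}(t)$ — computed via $\tfrac12\int_{\widehat{\gamma}(t)}\langle X,\mathcal N\rangle\,ds$ — equals $A(t)$, because the segment $S(t)$ lies on the $x$-axis where the outward normal is vertical and the contribution $\langle X,\mathcal N\rangle$ along $S(t)$ is $\pm y=0$ (the $y$-coordinate vanishes there by \eqref{bc1}); this is exactly the kind of boundary cancellation already used in the proof of Lemma \ref{lem:area-pre}. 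Then the (possibly non-embedded) closed curve $\widehat\gamma(t)$ satisfies the isoperimetric-type bound $4\pi |A(t)| \le (L(t)+\ell(t))^2$ only if $\widehat\gamma(t)$ is embedded; to avoid assuming embeddedness one uses instead the generalized isoperimetric inequality $4\pi A(t)\le (L(t)+\ell(t))^2$ that holds for the signed area of any closed curve, or — more robustly — one reflects $\gamma(t)$ across the $x$-axis.

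The cleaner route, which I would actually carry out, is the reflection argument: let $\gamma^{\mathrm r}(t)$ be the reflection of $\gamma(t)$ in the $x$-axis, and form the closed curve $\Gamma(t)=\gamma(t)\cup\gamma^{\mathrm r}(t)$ (they match up at the two endpoints on the axis, using \eqref{bc1}). Its length is $2L(t)$, and the signed area it encloses is $2A(t)=2A(0)$ by Lemma \ref{lem:area-pre}, since reflection negates the $y$-coordinate of both $X$ and $\mathcal N$ so $\langle X,\mathcal N\rangle$ is unchanged and each half contributes $A(t)$. Applying the isoperimetric inequality for the signed area of a closed planar curve, $4\pi\,(2A(0)) \le (2L(t))^2$, which simplifies to $L(t)^2\ge 2\pi A(0)$, i.e.\ \eqref{iso-ine}. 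I should note that $A(0)>0$: this follows because $\gamma(0)$ is an embedded concave curve with endpoints on the $x$-axis, so by \eqref{as1} it bounds a genuine region of positive area above (or below) the axis and $A(0)$ coincides with that positive area.

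The main obstacle is justifying the isoperimetric inequality in the form I need without embeddedness of $\gamma(t)$. The resolution is that the inequality $4\pi A \le L^2$, with $A$ the \emph{signed} (algebraic) area $\tfrac12\int\langle X,\mathcal N\rangle\,ds$ and $L$ the length, holds for \emph{every} closed $C^1$ curve — not merely embedded ones — as a consequence of Wirtinger's inequality applied to the Fourier expansion of the support function (this is the standard Hurwitz proof of the isoperimetric inequality, which never uses embeddedness). Invoking this general form for $\Gamma(t)$ avoids the open question of whether $\gamma(t)$ remains simple. A secondary minor point is checking that $\Gamma(t)$ is a legitimately closed curve of class $C^1$ at the two gluing points: at $p=\pm1$ the tangent of $\gamma(t)$ makes angle $\mp\psi_\pm$ with the $x$-axis by \eqref{bc2}, and its reflection makes angle $\pm\psi_\pm$, so $\Gamma(t)$ has corners there; but the isoperimetric inequality tolerates finitely many corners (one may also smooth them with an arbitrarily small change in length and area, or simply apply the inequality to piecewise-$C^1$ closed curves, for which Hurwitz's proof still goes through).
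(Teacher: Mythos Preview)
Your proposal is correct and follows essentially the same route as the paper: reflect $\gamma(t)$ across the $x$-axis to obtain a closed curve of length $2L(t)$ and signed area $2A(t)=2A(0)$, then apply the isoperimetric inequality to conclude $L(t)\ge\sqrt{2\pi A(0)}$. You are in fact more careful than the paper in justifying the isoperimetric inequality for possibly non-embedded curves (via Hurwitz's argument) and in addressing the corners at the gluing points, whereas the paper simply cites \cite{O}.
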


\begin{proof}
Let $\tilde{\gamma}(t)$ denote the mirror image of $\gamma(t)$ with respect to the $x$-axis, i.e., 
\[ \tilde{\gamma}(t) = \{\tilde{X}(p,t) = (x(p,t), -y(p,t)) \}, \]
where $(x(p,t), y(p,t))$ is the position of $\gamma(t)$. 
Then, the outward pointing unit normal vector $\tilde{\mathcal{N}}$ can be defined as 
\[ \tilde{\mathcal{N}} = (\mathcal{N}_x, -\mathcal{N}_y), \]
where $\mathcal{N}_x$ and $\mathcal{N}_y$ is the $x$-coordinate and the $y$-coordinate of the unit normal vector $\mathcal{N}$ of $\gamma(t)$. 
Furthermore, the (singed) area enclosed by $\gamma(t)$ and $\tilde{\gamma}(t)$ can be defined as 
\[ \dfrac{1}{2} \int_{\gamma(t)} \langle X, \mathcal{N} \rangle \; ds + \dfrac{1}{2} \int_{\tilde{\gamma(t)}} \langle \tilde{X}, \tilde{\mathcal{N}} \rangle \; ds \]
and it coincide with $2A(t)$ by virtue of the definition of $\tilde{\gamma}(t)$. 
Since $A(t)$ is preserved in time and the length of $\tilde{\gamma}(t)$ equals $2L(t)$, we obtain by the isoperimetric inequality for closed curves (see, for example, \cite{O}) 
\[ 2A(0) = 2A(t) \le \dfrac{(2L(t))^2}{4\pi}. \]
This implies the conclusion. 
\end{proof}

We now prove that the concavity is preserved for any solution. 

\begin{lem}\label{lem:concave}
Assume \eqref{as1}--{\rm (A4)} and let $X$ be the solution to \eqref{apcf}--\eqref{bc2} obtained by Theorem \ref{thm:main1}. 
$\Theta$ denotes the angle function defined by \eqref{def-angle-fun}. 
Then, 
\begin{align}
&-\psi_+ < \Theta(p,t) < \psi_- \quad \mbox{for} \quad (p,t) \in (-1,1) \times [0,T), \label{maxi-theta}\\
&\kappa(p, t) < 0 \quad \mbox{for} \quad (p,t) \in [-1,1] \times [0, T). \label{concavity}
\end{align}
\end{lem}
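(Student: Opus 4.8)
\textbf{Proof proposal for Lemma \ref{lem:concave}.}

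The plan is to prove the two assertions together by a maximum-principle argument, treating \eqref{maxi-theta} as the primary statement and deriving \eqref{concavity} from it via the boundary values. First I would set up the parabolic problem for $\Theta$. Recall from \eqref{eq-Theta} that $\Theta_t = \kappa_s + \alpha\kappa$, and from \eqref{deri-theta} that $\kappa = \Theta_s = \upsilon^{-1}\Theta_p$; differentiating $\kappa = \Theta_s$ once more gives $\kappa_s = \Theta_{ss}$, so $\Theta$ satisfies a quasilinear parabolic equation of the form $\Theta_t = \Theta_{ss} + \alpha\,\Theta_s$ (in the $p$-variable this is $\Theta_t = \upsilon^{-2}\Theta_{pp} + (\text{lower order})\,\Theta_p$), which has no zeroth-order term. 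The boundary conditions come from \eqref{bc2} together with the choice of orientation of $\mathcal{N}$: since $\mathcal{N}(-1,t) = (-\sin\psi_-,\cos\psi_-)$ and $\mathcal{N}(1,t) = (\sin\psi_+,\cos\psi_+)$, and $\mathcal{T}$ is obtained from $\mathcal{N}$ by a fixed rotation, one reads off $\Theta(-1,t) = \psi_-$ and $\Theta(1,t) = -\psi_+$ for all $t$. The initial data satisfies $-\psi_+ < \Theta(p,0) < \psi_-$ on $(-1,1)$ because $\gamma(0)$ is a concave graph with $\kappa(\cdot,0)<0$ (assumption (A4) and the graph property), so $\Theta(\cdot,0)$ is strictly decreasing from $\psi_-$ to $-\psi_+$.

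Next I would apply the parabolic maximum principle (and the strong maximum principle / Hopf lemma for the strict inequalities) to the equation for $\Theta$. Since the equation has no zeroth-order term, the constants $\psi_-$ and $-\psi_+$ are, respectively, a supersolution and a subsolution; comparing $\Theta$ with these constant barriers on the parabolic boundary — where $\Theta = \psi_-$ at $p=-1$, $\Theta = -\psi_+$ at $p=1$, and $-\psi_+ \le \Theta(\cdot,0) \le \psi_-$ at $t=0$ — yields $-\psi_+ \le \Theta(p,t) \le \psi_-$ on $[-1,1]\times[0,T)$. The strong maximum principle then upgrades this to the strict inequality \eqref{maxi-theta} in the open region $(-1,1)\times(0,T)$, and the hypothesis on the initial data handles $t=0$. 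For \eqref{concavity}: along the boundary $p=\pm1$ the Hopf boundary-point lemma applies. At $p=-1$, $\Theta$ attains its maximum $\psi_-$ over the closed spatial interval, so $\Theta_p(-1,t) \le 0$ with strict inequality by Hopf, whence $\kappa(-1,t) = \upsilon^{-1}\Theta_p(-1,t) < 0$; symmetrically at $p=1$, $\Theta$ attains its minimum, so $\Theta_p(1,t)<0$ and $\kappa(1,t)<0$. In the interior, $\kappa = \Theta_s$ satisfies the parabolic equation \eqref{k-s-eq}, namely $\kappa_t = \kappa_{ss} + \alpha\kappa_s + \kappa^2(\kappa + (\psi_++\psi_-)/L(t))$; since $\kappa(\cdot,0)<0$ and $\kappa<0$ on the boundary, and since the zeroth-order coefficient $\kappa(\kappa+(\psi_++\psi_-)/L(t))$ is a bounded function along the (already-known-to-exist) smooth solution, the maximum principle gives $\kappa<0$ throughout, proving \eqref{concavity}.

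I expect the main technical obstacle to be the rigorous justification of the maximum principle applications, specifically two points. First, one must ensure that $\upsilon$ stays positive and bounded on $[0,T)$ so that the operator $\upsilon^{-2}\partial_p^2 + \cdots$ is uniformly parabolic on the (possibly short) existence interval; this follows from the regularity \eqref{reg-sol} and \eqref{deri-leng} together with the lower bound $L(t)\ge\sqrt{2\pi A(0)}$ from \eqref{iso-ine} and the parametrization \eqref{restrict-para}, but it needs to be stated. Second, the coefficient $\alpha$ appearing in the drift term is only guaranteed to be as regular as the solution (it involves $\kappa_s$-type quantities through \eqref{alpha-p}); to invoke the classical strong maximum principle and Hopf lemma one should work on $[\varepsilon, T)$ where the solution is $C^\infty$ by \eqref{reg-sol}, obtain the strict inequalities there, and then pass to the limit $\varepsilon\to0$ using the continuity of $\Theta$ up to $t=0$ and the strict inequality in the initial data. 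A subtler point worth checking is the ordering at the initial time: one must verify directly from (A2)–(A4) that $\gamma(0)$ being $C^2$, concave, and meeting the axis at the prescribed angles forces $\Theta(\cdot,0)$ to lie strictly between $-\psi_+$ and $\psi_-$ on the open interval — this is where the compatibility of the boundary conditions with concavity enters, and it is the only place the graph-type structure of the initial curve is genuinely used.
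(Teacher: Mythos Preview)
Your approach is essentially the paper's: maximum principle for $\Theta$ to get \eqref{maxi-theta}, Hopf lemma at the endpoints to get $\kappa(\pm1,t)<0$, and then a maximum-principle argument on \eqref{k-s-eq} for the interior negativity. The only difference is that the paper carries out the last step explicitly rather than citing ``the maximum principle with bounded zeroth-order coefficient'': it sets $W=\kappa e^{\mu t}$, uses the isoperimetric bound $L(t)\ge\sqrt{2\pi A(0)}$ to choose $\mu>(\psi_++\psi_-)^2/(8\pi A(0))$ so that the coefficient $\kappa^2+(\psi_++\psi_-)\kappa/L+\mu$ is strictly positive, and then argues directly that $W$ cannot attain a new interior maximum above its parabolic-boundary values. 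This is exactly the exponential-weight trick underlying the abstract principle you invoke, so the arguments are the same in substance.

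One correction: you twice refer to $\gamma(0)$ as a ``concave graph'' and claim the graph structure is ``genuinely used'' for the initial ordering of $\Theta$. The paper makes \emph{no} graph assumption on $\gamma(0)$ --- dropping that hypothesis is precisely the point of this paper relative to \cite{SK}. The strict inequality $-\psi_+<\Theta(\cdot,0)<\psi_-$ on $(-1,1)$ follows from (A4) alone: $\kappa(\cdot,0)<0$ means $\Theta(\cdot,0)$ is strictly decreasing, and its endpoint values are $\psi_-$ and $-\psi_+$ by \eqref{bc2}. No graph property is needed or available.
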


\begin{proof}
First, we prove \eqref{maxi-theta}. 
By virtue of \eqref{bc2} and {\rm (A4)}, the angle function $\Theta$ satisfies 
\begin{align}
& -\psi_+ \le \Theta(p,0) \le \psi_- \quad \mbox{for} \quad p \in [-1,1], \notag\\
& \Theta(\pm1,t) = \mp \psi_\pm \quad \mbox{for} \quad t \in [0,T). \label{max-bdr}
\end{align}
Furthermore, since $\Theta$ satisfies the differential equation \eqref{eq-Theta}, we can apply the maximum principle to obtain \eqref{maxi-theta} by virtue of \eqref{deri-theta}.

Next, we prove \eqref{concavity}. 
Since \eqref{maxi-theta} and \eqref{max-bdr} hold, we see that $p=1$ and $p=-1$ are the minimum point and the maximum point of $\Theta$, respectively. 
Therefore, applying the Hopf lemma, we obtain 
\begin{equation}\label{neg-kappa-bdr} 
\kappa(\pm1, t) < 0 \quad \mbox{for} \quad t \in (0,T) 
\end{equation}
by virtue of \eqref{deri-theta}. 
Now, let $W(p,t) := \kappa(p,t) e^{\mu t}$, where $\mu$ is a constant to be chosen latter. 
From the equation \eqref{k-s-eq}, we obtain the following differential equation for $W(p,t)$: 
\begin{equation}\label{W-eq}
W_t = W_{ss} + \alpha W_s + W\left(\kappa^2 + \dfrac{\psi_+ + \psi_-}{L(t)} \kappa + \mu \right). 
\end{equation}
If we choose $\mu > \dfrac{(\psi_+ + \psi_-)^2}{8 \pi A(0)}$, then we have by \eqref{iso-ine} 
\begin{equation}\label{W-nega}
\kappa^2 + \dfrac{\psi_+ + \psi_-}{L(t)} \kappa + \mu = \left(\kappa + \dfrac{\psi_+ + \psi_-}{2L(t)} \right)^2 + \mu - \dfrac{(\psi_+ + \psi_-)^2}{4(L(t))^2} \ge \mu - \dfrac{(\psi_+ + \psi_-)^2}{8 \pi A(0)} > 0. 
\end{equation}
Now fix a time $0 < t_1 <T$ and define 
\[ W_{\rm max}(t) := \sup_{p \in [-1,1]} W(p,t), \quad \beta(t_1) := \max\{ W_{\rm max}(0), \max_{t \in [0,t_1]} W(\pm1, t)\}. \]
Note that $\beta(t_1) < 0$ from {\rm (A4)} and \eqref{neg-kappa-bdr}. 
Suppose that at some time $t_2 \in (0,t_1]$, $\beta(t_1) < W_{\rm max}(t_2) < 0$ to obtain a contradiction. 
Let $t_{\rm max} \in (0,t_1]$ be the smallest time such that $W(p_{\rm max}, t_{\rm max}) = W_{\rm max} (t_2)$ for some interior point $p_{\rm max} \in (-1,1)$. 
Then, we have 
\[ W_t \ge 0, \quad W = W_{\rm max}(t_2) < 0, \quad W_s = 0, \quad W_{ss} \ge 0 \quad \mbox{at} \quad (p_{\rm max}, t_{\rm max}). \]
This contradicts with \eqref{W-eq} and \eqref{W-nega}, and we conclude that 
\[ \kappa(p,t) \le W_{\rm max}(t) e^{-\mu t} \le \beta(t_1) e^{-\mu t} < 0 \]
for any $p \in [-1, 1]$ and $0 < t < t_1 <T$.
\end{proof}

\begin{rk}\label{rk:pre-sim}
The argument for \eqref{concavity} is based on the argument in \cite{G}. 
The preservation of the convexity also holds for the flow of a Jordan curve governed by \eqref{apcf}. 
While the simplicity of a closed curve with single rotation index follows immediately from its convexity as in \cite{G}, we do not know whether the curve $\gamma(t)$ is simple or not in our problem under the assumptions \eqref{as1}--{\rm (A4)}. 
Indeed, by \eqref{concavity}, the simplicity of the concave curve $\gamma(t)$ is equivalent to the condition $x(-1,t) < x(1,t)$ and it seems that $x(-1,t) > x(1,t)$ could possibly hold for some initial curve when $\psi_+ + \psi_- > \pi$ (see Figure \ref{fig:non-simple}). 
At least in the case $\psi_+ + \psi_- \le \pi$, the condition $x(-1,t) < x(1,t)$ holds for any $t > 0$ by virtue of the comparison principle. 
If $\gamma(t)$ is not simple, then $A(t)$ does not coincide with the area enclosed by $\gamma(t)$ and the $x$-axis. 
\end{rk}

\begin{figure}[t]
\begin{center}
\scalebox{0.35}{\includegraphics{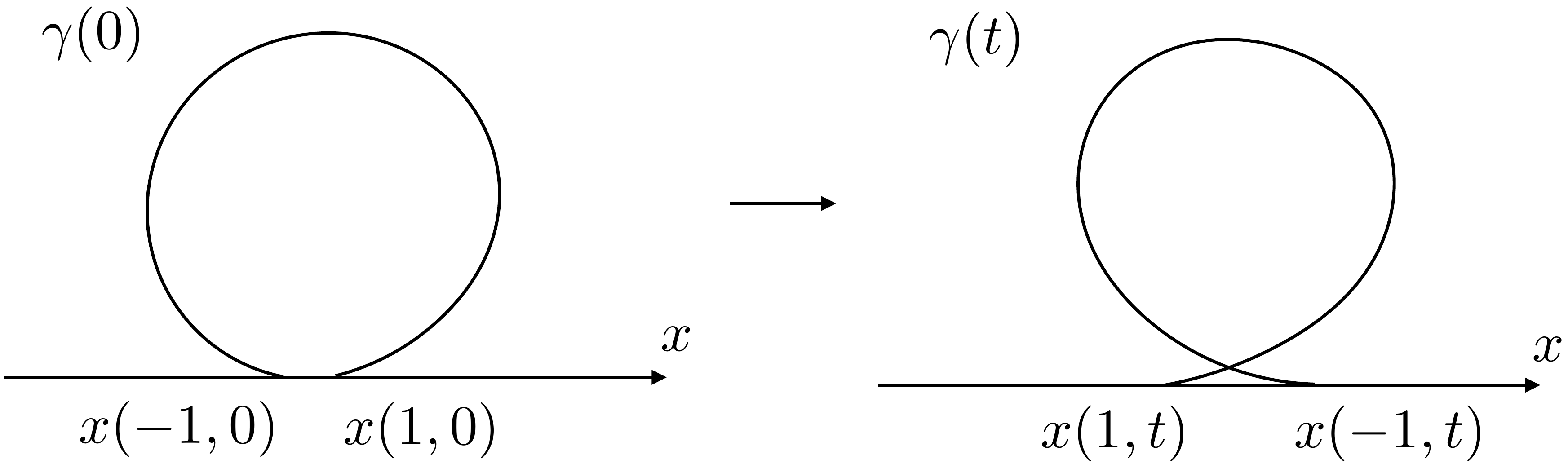}}
%\scalebox{0.30}{\includegraphics{angle2.eps}}
\end{center}
\caption{Possible case with a loss of the simplicity.}
\label{fig:non-simple}
\end{figure}

\begin{rk}\label{rk:regularity}
For the motion of a Jordan curve governed by \eqref{apcf}, the length of the curve is uniformly bounded by virtue of the variational structure as mentioned in the introduction. 
However, the variational structure for our problem is different from that for the motion of a Jordan curve. 
Therefore, we need some argument to prove the uniform boundedness of the length of the curve in our problem. 
For the motion of a concave graph, Shimojo and the author \cite{SK} proved the uniform boundedness of length. 
One of the key arguments in \cite{SK} for obtaining the boundedness is to compare the area $A(t)$ and the area of an interior triangle between $\gamma(t)$ and the $x$-axis. 
In that problem, the area $A(t)$ is larger than the area of an interior triangle, because $A(t)$ coincides with the area enclosed by $\gamma(t)$ and the $x$-axis. 
However, we cannot compare these areas in our problem because of the possibility of losing the simplicity of $\gamma(t)$ as in Remark \ref{rk:pre-sim}. 
The uniform boundedness of length was applied to obtain the uniform boundedness of the curvature in \cite{SK}. 
Therefore, we assume {\rm (A6)} and {\rm (A7)} in our problem, noting that the boundedness or otherwise of the length and the curvature should be studied further. 
\end{rk}

Since the curve $\gamma(t)$ is concave at any time $t$, the angle function $\Theta$ defined by \eqref{def-angle-fun} is injective. 
Therefore, we can re-parametrize $\gamma(t)$ by the angle $\theta \in [-\psi_+, \psi_-]$ between the tangent vector $\mathcal{T}$ and the $x$-axis. 
We will use this angle parameter $\theta$ from here on. 
As in \cite[Section 4]{SK}, we introduce the following differential equation for $\kappa$ in the variables $\theta$ and $t$: 
\begin{equation}\label{eq-theta}
\begin{cases}
\kappa_t = \kappa^2\left(\kappa_{\theta\theta} + \kappa + \frac{\psi_+ + \psi_-}{L(t)}\right) & \mbox{for} \quad -\psi_+ < \theta < \psi_-, \quad t>0, \\
\kappa_\theta = \cot \theta \left(\kappa + \frac{\psi_+ + \psi_-}{L(t)}\right) & \mbox{for} \quad \theta = \mp \psi_\pm, \quad t > 0, \\
\kappa(\theta,0) = \kappa_0(\theta) & \mbox{for} \quad -\psi_+ \le \theta \le \psi_- 
\end{cases}
\end{equation}
with the initial condition 
\begin{equation}\label{ini-con} 
\int_{-\psi_+}^{\psi_-} \frac{\sin\theta}{\kappa_0(\theta)} \; d\theta = 0. 
\end{equation}
Note that the length and the position of $\gamma(t)$ can be represented by the left endpoint $x(-1,0)$ at $t=0$ and the curvature $\kappa$ using the variables $\theta$ and $t$ as 
\begin{align}
& L(t) = - \int_{-\psi_+}^{\psi_-}\dfrac{d\theta}{\kappa(\theta,t)}, \label{leng-theta} \\
& x(-1,t) = x(-1,0) - \dfrac{1}{\sin \psi_-} \int_0^t \kappa(\psi_-, \tilde{t}) + \dfrac{\psi_+ + \psi_-}{L(\tilde{t})} \; d\tilde{t}, \label{left-theta}\\
& \gamma(t) = \left\{ \left( x(-1,t) - \int_{\theta}^{\psi_-} \dfrac{\cos\tilde{\theta}}{\kappa (\tilde{\theta}, t)} \; d\tilde{\theta}, -\int_{\theta}^{\psi_-} \dfrac{\sin\tilde{\theta}}{\kappa(\tilde{\theta}, t)} \; d\tilde{\theta} \right) : -\psi_+ \le \theta \le \psi_- \right\}. \label{posi-theta}
\end{align}
From the representation \eqref{posi-theta}, we see that the condition \eqref{ini-con} is equivalent to the boundary condition \eqref{bc1} at $t=0$. 
Furthermore, it is easy to see that 
\begin{equation}\label{bc1-theta} 
\int_{-\psi_+}^{\psi_-} \dfrac{\sin \theta}{\kappa (\theta,t)} \; d\theta = 0 \quad \mbox{for} \quad t \ge 0 
\end{equation}
since the derivative of the integral is zero for any solution to \eqref{eq-theta}. 
This condition is equivalent to \eqref{bc1}. 
Moreover, according to \eqref{posi-theta}, \eqref{bc1-theta} and $\mathcal{N} = (-\sin \theta, \cos\theta)$, the area $A(t)$ defined by \eqref{def-area} can be represented using $\kappa(\theta,t)$ as 
\begin{equation}\label{area-theta}
A(t) = \dfrac{1}{2} \left(-\int_{-\psi_+}^{\psi_-} \dfrac{\sin\theta}{\kappa(\theta, t)} \int_{\theta}^{\psi_-} \dfrac{\cos\tilde{\theta}}{\kappa(\tilde{\theta}, t)} \; d\tilde{\theta}d\theta  + \int_{-\psi_+}^{\psi_-} \dfrac{\cos \theta}{\kappa (\theta, t)} \int_{\theta}^{\psi_-} \dfrac{\sin \tilde{\theta}}{\kappa(\tilde{\theta}, t)} \; d\tilde{\theta} d\theta \right).
\end{equation}

In order to prove the global stability of the traveling waves obtained by \cite{KK}, we find a Lyapunov functional for the differential equation \eqref{eq-theta}. 
In this argument, we will need the boundedness of $|\kappa_{\theta}|$ and the uniform negativity of $\kappa$. 
We remark that Lemma \ref{lem:concave} only shows the negativity of $\kappa$ at any positive time. 
Hence we cannot obtain the uniform boundedness of $|\kappa_\theta|$ even if $|\kappa_s|$ is uniformly bounded since $\frac{\partial}{\partial \theta} = \frac{1}{\kappa} \cdot \frac{\partial}{\partial s}$ holds. 
The proof of the following lemma is based on \cite{A, CLW}.

\begin{lem}\label{lem:lip-theta}
Assume \eqref{as1}--{\rm (A7)} and let $X$ be the solution to \eqref{apcf}--\eqref{bc2} obtained by Theorem \ref{thm:main1}. 
Then, for any $\varepsilon > 0$, there exists a constant $M_1 > 0$ such that 
\begin{equation}\label{lip-theta}
|\kappa_\theta ( \theta, t)| \le M_1 \quad \mbox{for} \quad (\theta,t) \in [-\psi_+, \psi_-] \times [\varepsilon,\infty). 
\end{equation}
\end{lem}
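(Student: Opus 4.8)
The plan is to derive a parabolic differential inequality for $\kappa_\theta$ (or, more precisely, for an auxiliary quantity built from it and a power of $\kappa$) and then apply a maximum principle argument on the $(\theta,t)$-domain $[-\psi_+,\psi_-]\times(0,\infty)$. First I would differentiate the evolution equation \eqref{eq-theta} for $\kappa$ with respect to $\theta$. Writing $w := \kappa_\theta$, the first equation of \eqref{eq-theta} gives, after one $\theta$-derivative, an equation of the schematic form
\[
w_t = \kappa^2 w_{\theta\theta} + 2\kappa w (w_{\theta} + w + \cdots) + \kappa^2 w + (\text{lower order terms involving } \kappa,\,w,\,L(t)),
\]
where all coefficients are controlled because of (A6) (so $|\kappa|\le c_1$) and (A7) (so $L(t)\le c_2$, hence $1/L(t)$ is bounded below by the isoperimetric inequality \eqref{iso-ine}). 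The genuine difficulty is that this equation degenerates where $\kappa$ is small; however, Lemma \ref{lem:concave} gives $\kappa(\cdot,t)<0$ for $t>0$, and for the estimate away from $t=0$ (namely on $t\ge\varepsilon$) one expects a quantitative lower bound $-\kappa \ge \delta(\varepsilon)>0$. The cleanest way to obtain such a bound is to revisit the proof of Lemma \ref{lem:concave}: the function $W=\kappa e^{\mu t}$ was shown there to satisfy $\kappa(p,t)\le \beta(t_1)e^{-\mu t}<0$, and combined with the global bounds (A6), (A7) this yields a uniform (in $t\ge\varepsilon$) strict negativity $\kappa \le -\delta<0$; I would state and prove this uniform negativity as a preliminary step, since it is exactly what makes \eqref{eq-theta} uniformly parabolic on $[\varepsilon,\infty)$.

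Next I would handle the boundary. At $\theta=\mp\psi_\pm$ the second line of \eqref{eq-theta} is a Robin-type condition $\kappa_\theta = \cot\theta\,(\kappa + (\psi_++\psi_-)/L(t))$, so at the boundary $|w|=|\kappa_\theta|$ is already controlled directly in terms of the bounded quantities $\kappa$ and $1/L(t)$; there is nothing to prove there beyond substituting the known bounds. This is the reason the estimate is clean: the maximum of $|\kappa_\theta|$ over $[-\psi_+,\psi_-]$ can only be attained either at an interior point or on the boundary, and the boundary contributes an explicit constant. So the strategy is: let $m(t):=\max_{\theta}|\kappa_\theta(\theta,t)|$ (or work with $w^2$ to avoid sign issues); at an interior maximum point of $w^2$ we have $(w^2)_\theta=0$ and $(w^2)_{\theta\theta}\le 0$, and plugging into the $w$-equation multiplied appropriately gives $\frac{d}{dt}m^2 \le C_1 m^2 + C_2$ in the barrier sense (here the crucial cancellation is that the genuinely dangerous term $2\kappa w\, w_\theta$ vanishes at the maximum, since $w_\theta=0$ there). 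By a standard ODE comparison (Gronwall) this gives a bound on $m(t)$ on any finite interval; to get the bound uniform on all of $[\varepsilon,\infty)$ one needs the right sign, i.e. the coefficient of $m^2$ should be nonpositive, or one applies the argument on sliding unit time-intervals using smoothing: since $\kappa\in C^\infty$ for $t>0$ and everything is autonomous-in-structure with bounded coefficients, parabolic interior estimates on $[\varepsilon/2,\infty)$ propagate a bound on $\|\kappa\|_{C^1}$ forward in time without growth.

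I expect the main obstacle to be precisely extracting a sharp enough differential inequality so that the constant does not blow up as $t\to\infty$ — a naive Gronwall gives exponential growth, which is useless here. The resolution, following the cited works \cite{A, CLW}, is to choose a better test quantity: rather than $\kappa_\theta^2$ alone, one considers something like $g := \kappa_\theta^2 + \lambda \kappa^2$ (or $\kappa_\theta^2 \kappa^{-2}$, which is $\kappa_s^2$-like and already bounded by assumption, turned around) for a suitable constant $\lambda$, engineered so that the bad positive terms in the evolution of $\kappa_\theta^2$ are absorbed by the good negative terms coming from $\kappa^2$ or from the $-\kappa^2 w_\theta^2$-type diffusion contributions. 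With the uniform negativity $\kappa\le-\delta<0$ in hand, the coefficient structure becomes favorable and one obtains $\frac{d}{dt}\big(\max_\theta g\big)\le -\nu \max_\theta g + C$ for some $\nu>0$, which immediately yields the claimed uniform bound $|\kappa_\theta|\le M_1$ on $[\varepsilon,\infty)$. The boundary term is, as noted, harmless. I would also remark that the dependence of $M_1$ on $\varepsilon$ enters only through the lower bound $\delta=\delta(\varepsilon)$ on $-\kappa$; once $\delta$ is fixed, $M_1$ is uniform in $t$.
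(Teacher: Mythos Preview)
Your proposal identifies the right test quantity --- $g=\kappa_\theta^2+\lambda\kappa^2$ (the paper takes $\lambda=1$) --- and correctly observes that the boundary condition in \eqref{eq-theta} bounds $|\kappa_\theta|$ on $\theta=\mp\psi_\pm$ for free. But there is a genuine gap in the logical order.

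You propose to first establish a uniform lower bound $-\kappa\ge\delta(\varepsilon)>0$ and then run a Gronwall/decay argument on $g$. The route you suggest for that lower bound, namely revisiting Lemma~\ref{lem:concave}, does not work: the estimate $\kappa\le\beta(t_1)e^{-\mu t}$ there has $\beta(t_1)=\max\{W_{\max}(0),\max_{[0,t_1]}W(\pm1,\cdot)\}$ depending on $t_1$, and nothing prevents $\beta(t_1)\to 0$ as $t_1\to\infty$. The assumptions (A6), (A7) do not help here --- a bound on $L(t)=\int(-\kappa)^{-1}\,d\theta$ only forces $-\kappa$ to be bounded below \emph{somewhere}, not uniformly. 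In fact the paper proves the uniform negativity (Lemma~\ref{lem:uni-nega-k}) \emph{after} and \emph{using} the present lemma, precisely via the Lipschitz bound \eqref{lip-theta}. So your preliminary step would make the argument circular.

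The paper avoids this entirely. With $\Phi=\kappa^2+\kappa_\theta^2$, suppose an interior space--time maximum occurs at $(\theta_0,t_0)$ with $t_0>\varepsilon$ and $\kappa_\theta(\theta_0,t_0)\neq 0$. Then $\Phi_\theta=0$ forces $\kappa_{\theta\theta}+\kappa=0$, and $\Phi_{\theta\theta}\le 0$ gives $\kappa_\theta(\kappa_{\theta\theta\theta}+\kappa_\theta)\le 0$. Computing $\Phi_t$ from \eqref{eq-theta} and substituting these identities leaves
\[
\Phi_t \;\le\; \bigl(2\kappa^3+4\kappa_\theta^2\kappa\bigr)\frac{\psi_++\psi_-}{L(t)} \;=\; 2\kappa\bigl(\kappa^2+2\kappa_\theta^2\bigr)\frac{\psi_++\psi_-}{L(t)} \;<\;0,
\]
using only the \emph{pointwise} negativity $\kappa(\theta_0,t_0)<0$ from Lemma~\ref{lem:concave}. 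This contradicts $\Phi_t\ge 0$, so $\kappa_\theta(\theta_0,t_0)=0$ and $\sup\Phi\le\sup\kappa^2\le c_1^2$. No quantitative lower bound on $-\kappa$, no Gronwall, and no decay rate are needed; the maximum principle argument is a one-shot contradiction. Your proposed differential inequality $\frac{d}{dt}(\max g)\le -\nu\max g + C$ with $\nu>0$ is neither derived nor needed, and the uniform parabolicity you invoke is unavailable at this stage.
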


\begin{proof}
Denote $I:= [-\psi_+, \psi_-]$ for simplicity. 
We note that $\|\kappa_\theta(\cdot,\varepsilon)\|_{L^\infty(I)}$ is bounded by some constant depending on $\varepsilon$ by virtue of the higher regularity in Lemma \ref{short-v} and the assumptions {\rm (A6)} and {\rm (A7)}. 
Here, \eqref{concavity} is also used since the relation $\frac{\partial}{\partial \theta} = \frac{1}{\kappa} \cdot \frac{\partial}{\partial s}$ holds. 
We prove 
\begin{equation}\label{lip-theta-1}
\sup_{I\times [\varepsilon,\infty)}(\kappa^2 + \kappa_\theta^2) \le \max \left\{ \sup_{I \times [\varepsilon,\infty)} \kappa^2, \sup_{I\times \{\varepsilon\}} (\kappa^2 + \kappa_\theta^2), \sup_{\{\mp \psi_\pm\} \times [\varepsilon,\infty)} (\kappa^2 + \kappa_\theta^2) \right\}. 
\end{equation}
By virtue of the assumption {\rm (A6)}, the isoperimetric inequality \eqref{iso-ine} and the boundary condition of \eqref{eq-theta}, we may see that the right hand side of \eqref{lip-theta-1} is bounded as 
\begin{align}
&\sup_{I \times [\varepsilon,\infty)} \kappa^2 \le c_1^2, \label{bdd-const1}\\
&\sup_{I \times \{\varepsilon\}} (\kappa^2 + \kappa_\theta^2) \le c_1^2 + \|\kappa_\theta (\cdot,\varepsilon)\|_{L^\infty(I)}^2, \label{bdd-const2}\\
&\sup_{\{\mp \psi_\pm\} \times [\varepsilon,\infty)} (\kappa^2 + \kappa_{\theta}^2) \le c_1^2 + \max\{ \cot^2 \psi_+, \cot^2 \psi_-\} \left(c_1 + \frac{\psi_+ + \psi_-}{\sqrt{2 \pi A(0)} }\right)^2. \label{bdd-const3}
\end{align} 
Therefore, it is enough to prove \eqref{lip-theta-1} in order to obtain the conclusion.
Let $\Psi(\theta,t) = \kappa^2(\theta,t) + \kappa_\theta^2(\theta,t)$ and let $T > \varepsilon$ be fixed. 
Suppose at $(\theta_0, t_0) \in I \times [\varepsilon,T]$ we have $\Phi(\theta_0, t_0) = \sup_{I \times [\varepsilon,T]} (\kappa^2 + \kappa_\theta^2)$. 
Then we may assume $\theta_0$ is an interior point in $I$ and $t_0 > \varepsilon$ since otherwise we are done. 
We claim that $\kappa_\theta (\theta_0, t_0) = 0$. 
If not, we will have 
\begin{align}
&\frac{\Phi_\theta}{2\kappa_\theta} = \kappa_{\theta\theta} + \kappa = 0, \label{lip-theta-2}\\
&\Phi_{\theta\theta} = 2\kappa_{\theta\theta} ( \kappa_{\theta\theta} + \kappa) + 2\kappa_{\theta} (\kappa_{\theta\theta\theta} + \kappa_\theta) = 2\kappa_{\theta} (\kappa_{\theta\theta\theta} + \kappa_\theta) \le 0, \label{lip-theta-3}\\
& \Phi_t \ge 0 \label{lip-theta-4}
\end{align}
at $(\theta_0,t_0)$. 
On the other hand, by virtue of \eqref{eq-theta}, we obtain by a simple calculation 
\begin{equation}\label{lip-theta-5}
\Phi_t = 2 \kappa^2 \kappa_\theta (\kappa_{\theta\theta\theta} + \kappa_\theta) + (2 \kappa^3 + 4\kappa_\theta^2 \kappa) \dfrac{\psi_+ + \psi_-}{L(t)} + (4\kappa \kappa_\theta^2 + 2\kappa^3) (\kappa_{\theta\theta} + \kappa) 
\end{equation}
for $(\theta, t) \in I \times [0,T]$. 
Since $\kappa$ is negative at finite time $t_0$, substituting \eqref{lip-theta-2} and \eqref{lip-theta-3} into \eqref{lip-theta-5}, we obtain $\Phi_t < 0$. 
This contracts with \eqref{lip-theta-4}. 
Therefore $\kappa_\theta (\theta_0, t_0) = 0$ and we conclude that 
\[ \sup_{I \times [0,T]} (\kappa^2 + \kappa_\theta^2) = \kappa^2(\theta_0, t_0) \le \sup_{I \times [0,T]} \kappa^2. \]
Thus, we obtain \eqref{lip-theta-1}. 
The estimate \eqref{lip-theta} follows form \eqref{lip-theta-1}--\eqref{bdd-const3} and $\sup_{I \times [\varepsilon, \infty)} \kappa_\theta^2 \le \sup_{I \times [\varepsilon, \infty)} \kappa^2 + \kappa_\theta^2$. 
\end{proof}

Next, we prove the uniform negativity of the curvature. 

\begin{lem}\label{lem:uni-nega-k}
Assume \eqref{as1}--{\rm (A7)} and let $X$ be the solution to \eqref{apcf}--\eqref{bc2} obtained by Theorem \ref{thm:main1}. 
Then, there exists a constant $M_2 > 0$ such that
\begin{equation}\label{uni-nega-k} 
\kappa(\theta,t) \le - M_2 \quad \mbox{for} \quad (\theta,t) \in [-\psi_+, \psi_-] \times [0,\infty). 
\end{equation}
\end{lem}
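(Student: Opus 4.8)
The plan is to combine the already-established upper bounds with a lower bound on the length obtained by integrating $-1/\kappa$, and then to rule out the possibility that $\kappa$ touches zero by a barrier/maximum-principle argument applied to $1/\kappa$ (equivalently to the support-function or curvature-radius equation). The rough idea: if $\kappa$ were not uniformly negative there would be a sequence $(\theta_n,t_n)$ with $\kappa(\theta_n,t_n)\to 0$; since $|\kappa_\theta|$ is bounded on $[-\psi_+,\psi_-]\times[\varepsilon,\infty)$ by Lemma \ref{lem:lip-theta}, a near-zero value of $|\kappa|$ at one angle forces $|\kappa|$ to be small on a whole $\theta$-interval of definite length, which via \eqref{leng-theta} would force $L(t_n)\to\infty$, contradicting (A7). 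This handles $t\ge\varepsilon$; the compact interval $[0,\varepsilon]$ is then covered separately using the smoothness from Lemma \ref{short-v} together with \eqref{concavity} (which gives strict negativity on the closed set $[-\psi_+,\psi_-]\times[0,\varepsilon]$, once we check the endpoints $t=0$ via (A4) and the boundary $\theta=\mp\psi_\pm$ via the boundary condition in \eqref{eq-theta}).

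First I would treat $t\in[\varepsilon,\infty)$. Fix $\varepsilon>0$ and let $M_1=M_1(\varepsilon)$ be the Lipschitz bound from Lemma \ref{lem:lip-theta}, so $|\kappa(\theta,t)-\kappa(\theta',t)|\le M_1|\theta-\theta'|$ for all $\theta,\theta'\in I:=[-\psi_+,\psi_-]$ and $t\ge\varepsilon$. Suppose toward a contradiction that $\inf_{I\times[\varepsilon,\infty)}(-\kappa)=0$; pick $(\theta_n,t_n)$ with $-\kappa(\theta_n,t_n)=:\delta_n\to 0$. By the Lipschitz bound, $-\kappa(\theta,t_n)\le \delta_n+M_1|\theta-\theta_n|$ for all $\theta\in I$, hence $-1/\kappa(\theta,t_n)\ge 1/(\delta_n+M_1|\theta-\theta_n|)$. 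Integrating over $\theta\in I$ and using \eqref{leng-theta},
\begin{equation*}
L(t_n)=\int_{-\psi_+}^{\psi_-}\frac{-d\theta}{\kappa(\theta,t_n)}\ge \int_{-\psi_+}^{\psi_-}\frac{d\theta}{\delta_n+M_1|\theta-\theta_n|}\ \longrightarrow\ \infty
\end{equation*}
as $n\to\infty$ (the integrand blows up like $1/(M_1|\theta-\theta_n|)$ near $\theta_n$, which is non-integrable in the limit $\delta_n\to 0$, and $\theta_n$ stays in the compact set $I$ so at least a one-sided neighborhood of fixed length survives). This contradicts (A7), so $\kappa\le -M_2'$ on $I\times[\varepsilon,\infty)$ for some $M_2'>0$ depending on $\varepsilon$; to get a bound independent of $\varepsilon$ one fixes, say, $\varepsilon=1$.

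It remains to handle $t\in[0,1]$. On $I\times\{0\}$, $\kappa_0<0$ by (A4) and $\kappa_0$ is continuous on the compact set $I$, so $\kappa_0\le -m_0<0$. For $t\in(0,1]$ the flow is smooth (Lemma \ref{short-v} / Theorem \ref{thm:main1}) and $\kappa<0$ everywhere by \eqref{concavity}; at the endpoints $\theta=\mp\psi_\pm$ the boundary relation $\kappa_\theta=\cot\theta\,(\kappa+(\psi_++\psi_-)/L(t))$ together with the isoperimetric bound \eqref{iso-ine} and (A6) keeps $\kappa(\mp\psi_\pm,t)$ bounded, and the same Lipschitz-forces-large-length argument as above (now on a fixed compact time interval where (A6), (A7) still apply, and using interior regularity on $[\varepsilon_0,1]$ for any small $\varepsilon_0$) shows $\kappa$ cannot approach $0$. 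More simply: $-\kappa$ is continuous and positive on the compact set $I\times[\varepsilon_0,1]$ hence bounded below there, for any $\varepsilon_0>0$, and the previous paragraph with $\varepsilon=\varepsilon_0$ already gives a bound on $I\times[\varepsilon_0,\infty)$; combining with the bound $-m_0$ near $t=0$ (by continuity of $\kappa$ up to $t=0$ from the $C^{1+\alpha,(1+\alpha)/2}$ regularity in \eqref{reg-sol}, we may shrink to a neighborhood $[0,\varepsilon_0]$ where $\kappa\le -m_0/2$) yields a uniform $M_2>0$ with $\kappa\le -M_2$ on all of $I\times[0,\infty)$.

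\textbf{Main obstacle.} The delicate point is making the ``small $|\kappa|$ at one angle $\Rightarrow$ $L$ large'' argument fully rigorous near the moving point $\theta_n$: one must ensure that the interval over which the Lipschitz bound forces $-1/\kappa$ to be large has length bounded below independently of $n$ (true since $\theta_n\in I$ compact, so either $[\theta_n,\theta_n+\ell]$ or $[\theta_n-\ell,\theta_n]$ lies in $I$ for $\ell=\tfrac12\min\{\psi_+,\psi_-\}$), and that the resulting lower integral diverges as $\delta_n\to 0$ — which it does, being $\sim M_1^{-1}\log(1+M_1\ell/\delta_n)$. One also needs the $\varepsilon$-dependence of $M_1$ to be harmless, which it is because we only ever use one fixed $\varepsilon$; there is no need for $M_1$ uniform down to $t=0$ since the short-time regularity handles $[0,\varepsilon]$ directly.
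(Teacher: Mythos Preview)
Your argument is correct and is essentially the same as the paper's: both proceed by contradiction, use the Lipschitz bound on $\kappa_\theta$ from Lemma~\ref{lem:lip-theta} to show that a near-zero value of $-\kappa$ at one angle forces $-1/\kappa$ to be large on a $\theta$-interval of definite length, and then derive $L(t_n)\to\infty$ via \eqref{leng-theta} to contradict (A7). The only cosmetic difference is that the paper handles finite times in one line (observing that strict negativity of $\kappa$ on compact time intervals forces $t_i\to\infty$) rather than splitting explicitly into $[0,\varepsilon]$ and $[\varepsilon,\infty)$ as you do; your logarithmic lower bound $M_1^{-1}\log(1+M_1\ell/\delta_n)$ is exactly the estimate the paper writes down.
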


\begin{proof}
In proof by contradiction, suppose that there exists a sequence $(\theta_i, t_i) \in [-\psi_+,\psi_-] \times [0,\infty)$ such that 
\begin{equation}\label{uni-nega-k-1}
\lim_{i \to \infty} \kappa(\theta_i, t_i) = 0. 
\end{equation}
Since $[-\psi_+, \psi_-]$ is compact and $\kappa$ is negative at finite time, we may assume 
\[ \lim_{i \to \infty} \theta_i = \hat{\theta}, \quad \lim_{i \to \infty} t_i = \infty \]
for some $\hat{\theta} \in [-\psi_+, \psi_-]$ without loss of generality. 
First we consider the case $\hat{\theta} \neq -\psi_+$. 
Let $s(\theta, t)$ is the arc-length at $(x(\theta,t), y(\theta,t)) \in \gamma(t)$. 
By virtue of $\frac{\partial}{\partial \theta} = \frac{1}{\kappa} \cdot \frac{\partial}{\partial s}$ and {\rm (A7)}, we may see that 
\begin{equation}\label{uni-nega-k-2}
-\int_{-\psi_+}^{\theta_i} \dfrac{d\theta}{\kappa(\theta,t_i)} = |s(\theta_i, t_i) - s(-\psi_+,t_i)| \le L(t_i) \le c_2
\end{equation}
for arbitrary $i \in \mathbb{N}$. 
Now, we prove the left hand side of \eqref{uni-nega-k-2} diverges to infinity as $i \to \infty$. 
Applying \eqref{lip-theta}, we have 
\[ -\kappa(\theta,t_i) = (\kappa(\theta_i,t_i) - \kappa(\theta, t_i)) - \kappa(\theta_i,t_i) \le M_1 |\theta - \theta_i| - \kappa(\theta_i, t_i) = M_1 (\theta_i - \theta) - \kappa(\theta_i,t_i) \]
for arbitrary $\theta \in [-\psi_+, \theta_i]$ and sufficiently large $i \in \mathbb{N}$. 
Therefore, we obtain 
\begin{equation}\label{uni-nega-k-3}
-\int_{-\psi_+}^{\theta_i} \dfrac{d\theta}{\kappa(\theta,t_i)} \ge \int_{-\psi_+}^{\theta_i} \dfrac{d\theta}{C_1(\theta_i-\theta) - \kappa(\theta_i,t_i)} = \dfrac{1}{M_1} \log \left(\dfrac{M_1(\theta_i + \psi_+)- \kappa(\theta_i,t_i)}{-\kappa(\theta_i,t_i)} \right)
\end{equation}
for sufficiently large $i \in \mathbb{N}$. 
By virtue of the convergence \eqref{uni-nega-k-1} and $\lim_{i \to \infty} \theta_i = \hat{\theta} \neq -\psi_+$, the right hand side of \eqref{uni-nega-k-3} diverges to infinity, and hence the left hand side also diverges to infinity. 
This contradicts with \eqref{uni-nega-k-2}. 
In the case $\hat{\theta} \neq \psi_-$, we can obtain a contradiction by a similar argument. 
Therefore, we obtain \eqref{uni-nega-k}. 
\end{proof}

\subsection{Traveling waves}

As mentioned in the introduction, the existence of traveling waves was studied by \cite{KK, SK}. 
We recall the existence theory from \cite{KK} for convenience. 

\begin{thm}[{\cite[Theorem 1]{KK}}] \label{thm:ex-traveling}
For any $\psi_\pm \in (0,\pi)$, there exists a traveling wave $\mathcal{W}(t)$ for \eqref{apcf}--\eqref{bc2} defined as \eqref{def-tw} such that 
\[ -\int_{\mathcal{W}(0)} \kappa_{\mathcal{W}} \; ds = \psi_+ + \psi_-, \quad L_{\mathcal{W}} = 1, \]
where $\kappa_{\mathcal{W}}$ and $L_{\mathcal{W}}$ are the curvature and the length of $\mathcal{W}(0)$, respectively. 
Furthermore, the traveling wave is concave {\rm (}i.e.\ $\kappa_{\mathcal{W}} < 0${\rm )} and unique except the translation parallel to the $x$-axis. 
In addition, the wave speed $c$ and the curvature $\kappa_{\mathcal{W}}$ fulfill
\begin{align}
&\psi_{-}
\left\{
\begin{array}{l}
> \\ = \\ <
\end{array}
\right\}
\psi_{+}
\;
\iff
\;
c
\left\{
\begin{array}{l}
> \\ = \\ <
\end{array}
\right\}
0, \label{sign-c}\\
&\kappa_{\mathcal{W}} (\theta) = -c \sin \theta - (\psi_+ + \psi_-) \;\; \mbox{for} \;\; -\psi_+ \le \theta \le \psi_-. \label{curvature-tra}
\end{align}
\end{thm}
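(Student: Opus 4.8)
The plan is to reduce the construction of the traveling wave to a single scalar equation for the wave speed $c$, which I will solve by monotonicity, and then to read off all the asserted properties from that equation. First I would insert the ansatz $\mathcal{W}(t) = \mathcal{W}(0) + c t\vec{e}_1$ directly into \eqref{apcf}. Parametrizing the concave profile curve by its tangent angle $\theta$ ranging over $[-\psi_+,\psi_-]$ (so that, in particular, $-\int_{\mathcal{W}(0)}\kappa_{\mathcal{W}}\,ds = \psi_+ + \psi_-$), the outward normal is $\mathcal{N} = (-\sin\theta,\cos\theta)$ and $\langle \partial_t X, \mathcal{N}\rangle = -c\sin\theta$, so equation \eqref{apcf} reads $\kappa_{\mathcal{W}}(\theta) + (\psi_+ + \psi_-)/L_{\mathcal{W}} = -c\sin\theta$, which under the normalization $L_{\mathcal{W}} = 1$ is exactly \eqref{curvature-tra}. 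So it remains to exhibit a $c$ for which the curve reconstructed from this curvature through \eqref{posi-theta} closes up on the $x$-axis and has length $1$.

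For this I would introduce, for $c$ in the open interval $J$ on which $c\sin\theta + (\psi_+ + \psi_-) > 0$ for all $\theta\in[-\psi_+,\psi_-]$,
\[
\Phi(c) := \int_{-\psi_+}^{\psi_-}\frac{\sin\theta}{c\sin\theta + (\psi_+ + \psi_-)}\,d\theta = -\int_{-\psi_+}^{\psi_-}\frac{\sin\theta}{\kappa_{\mathcal{W}}(\theta)}\,d\theta,
\]
so that, by \eqref{bc1-theta}, the closing condition \eqref{bc1} for the reconstructed curve is exactly $\Phi(c) = 0$. Writing $m := \min_{[-\psi_+,\psi_-]}\sin\theta < 0$ and $M := \max_{[-\psi_+,\psi_-]}\sin\theta > 0$, one has $J = \big(-(\psi_+ + \psi_-)/M,\ -(\psi_+ + \psi_-)/m\big)$, and for every $c \in J$ the curvature $\kappa_{\mathcal{W}}$ is strictly negative on $[-\psi_+,\psi_-]$, so concavity is automatic. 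Moreover the length normalization is not an extra requirement: since $(\psi_+ + \psi_-)L_{\mathcal{W}} = \int_{-\psi_+}^{\psi_-}\big(1 - \frac{c\sin\theta}{c\sin\theta + (\psi_+ + \psi_-)}\big)d\theta = (\psi_+ + \psi_-) - c\,\Phi(c)$ by \eqref{leng-theta}, the identity $\Phi(c) = 0$ forces $L_{\mathcal{W}} = 1$. Hence the entire problem collapses to solving $\Phi(c) = 0$ on $J$.

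Then I would analyze $\Phi$ on $J$. It is smooth with $\Phi'(c) = -\int_{-\psi_+}^{\psi_-}\sin^2\theta\,\big(c\sin\theta + (\psi_+ + \psi_-)\big)^{-2}\,d\theta < 0$, hence strictly decreasing, and as $c$ tends to the right (resp.\ left) endpoint of $J$ the denominator degenerates at the angle where $\sin\theta = m$ (resp.\ $\sin\theta = M$), forcing $\Phi(c) \to -\infty$ (resp.\ $\Phi(c)\to +\infty$). Therefore there is a unique $c^* \in J$ with $\Phi(c^*) = 0$; this is the wave speed, $\kappa_{\mathcal{W}}(\theta) = -c^*\sin\theta - (\psi_+ + \psi_-)$ is the profile curvature, and \eqref{posi-theta} reconstructs a concave curve $\mathcal{W}(0)$ of length $1$ with endpoints on the $x$-axis and interior contact angles $\psi_\pm$, for which $\mathcal{W}(t) := \mathcal{W}(0) + c^* t\vec{e}_1$ solves \eqref{apcf}--\eqref{bc2} for all $t \in \mathbb{R}$. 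Uniqueness up to a horizontal translation follows from the uniqueness of $c^*$ and the fact that a curvature function of $\theta$ determines a curve up to a rigid motion, the endpoint and contact-angle conditions then eliminating all freedom except translation parallel to the $x$-axis. Finally, since $\Phi$ is strictly decreasing and $\Phi(0) = (\cos\psi_+ - \cos\psi_-)/(\psi_+ + \psi_-)$, while $\cos$ is strictly decreasing on $(0,\pi)$, the sign of $c^*$ coincides with the sign of $\Phi(0)$: $c^* > 0 \iff \psi_+ < \psi_-$, with the obvious equality cases, which is \eqref{sign-c}.

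The step I expect to be the main obstacle is the boundary behaviour of $\Phi$ at $\partial J$: one must make the divergences $\Phi(c) \to \mp\infty$ rigorous, distinguishing whether the extremal angle of $\sin\theta$ on $[-\psi_+,\psi_-]$ is an interior point (then $c\sin\theta + (\psi_+ + \psi_-)$ vanishes quadratically, producing a $\big(\mathrm{dist}(c,\partial J)\big)^{-1/2}$ blow-up of the integral) or an endpoint (linear vanishing, hence only a logarithmic blow-up), with a further subcase at $\psi_\pm = \pi/2$. A secondary technical point is to verify that the curve produced by \eqref{posi-theta} is an admissible curve for the free boundary problem and that the translating family $\mathcal{W}(t)$ is a genuine classical global-in-time solution with the correct (outward) orientation of $\mathcal{N}$.
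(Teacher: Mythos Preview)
The paper does not prove this theorem; it is quoted from \cite{KK} as a known result, so there is no in-paper proof to compare against.

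Your argument is correct and is the natural one. Reducing to the scalar equation $\Phi(c)=0$ via the tangent-angle parametrization, observing that $L_{\mathcal W}=1$ is automatic once $\Phi(c)=0$, and then using strict monotonicity of $\Phi$ together with its blow-up at $\partial J$ gives existence and uniqueness of the speed; the sign relation \eqref{sign-c} drops out of $\Phi(0)=(\cos\psi_+-\cos\psi_-)/(\psi_++\psi_-)$ exactly as you say. The boundary blow-up you flag as the main obstacle is genuine but routine: your case split (interior extremum of $\sin\theta$ giving quadratic vanishing and an $\varepsilon^{-1/2}$ divergence, endpoint extremum giving linear vanishing and a logarithmic divergence, with the borderline $\psi_\pm=\pi/2$) is precisely what is needed, and in every case the integral does diverge with the required sign.

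One small gap in the uniqueness part: your derivation of \eqref{curvature-tra} and the reduction to $\Phi(c)=0$ on $J$ already presuppose that the profile can be parametrized monotonically by $\theta$, i.e.\ that it is concave. To close the argument you should note that concavity is automatic for \emph{any} traveling wave with the prescribed winding number and $L_{\mathcal W}=1$: from $\Theta_s=\kappa=-c\sin\Theta-(\psi_++\psi_-)$ one has an autonomous first-order ODE for the tangent angle, so $\Theta_s$ cannot vanish at any point without $\Theta$ being identically constant (by ODE uniqueness), contradicting $\Theta=\psi_-$ and $\Theta=-\psi_+$ at the two endpoints. Since $\Theta$ must decrease overall, $\kappa=\Theta_s<0$ everywhere; in particular $c\in J$, and your uniqueness of $c^\ast\in J$ then really does give uniqueness among all admissible traveling waves up to horizontal translation.
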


We next study the positivity of the area $A_{\mathcal{W}}$ defined by \eqref{def-area3} for the traveling wave obtained in Theorem \ref{thm:ex-traveling}. 
The positivity is not obvious, because we do not know whether the profile curve $\mathcal{W}(0)$ is simple (see also Remark \ref{rk:pre-sim}). 

\begin{lem}
Let $\mathcal{W}(t)$ be the traveling wave obtained in Theorem \ref{thm:ex-traveling}. 
Then, 
\begin{equation}\label{posi-area-tra} 
x_{-, \mathcal{W}} < x_{+, \mathcal{W}}, \quad A_{\mathcal{W}} > 0, 
\end{equation}
where $x_{-, \mathcal{W}}$ is the left endpoint of $\mathcal{W}(0)$, $x_{+, \mathcal{W}}$ is the right endpoint of $\mathcal{W}(0)$ and $A_{\mathcal{W}}$ is the area defined by \eqref{def-area3}. 
\end{lem}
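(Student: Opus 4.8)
The plan is to prove the two inequalities in \eqref{posi-area-tra} together, using only the explicit formula \eqref{curvature-tra} for $\kappa_{\mathcal{W}}$ and the constraint $L_{\mathcal{W}}=1$ together with \eqref{bc1-theta} (the vanishing of $\int \sin\theta/\kappa_{\mathcal{W}}\,d\theta$, which holds for the traveling profile since it is a stationary-shape solution of \eqref{eq-theta}). Since the profile curve is concave, $\kappa_{\mathcal{W}}<0$, and by \eqref{posi-theta} the right endpoint minus the left endpoint equals
\[
x_{+,\mathcal{W}} - x_{-,\mathcal{W}} = \int_{-\psi_+}^{\psi_-} \frac{\cos\theta}{\kappa_{\mathcal{W}}(\theta)}\,d\theta \Big/ (-1) = -\int_{-\psi_+}^{\psi_-}\frac{\cos\theta}{\kappa_{\mathcal{W}}(\theta)}\,d\theta.
\]
Wait — more precisely, from \eqref{posi-theta} one has $x_{+,\mathcal{W}}-x_{-,\mathcal{W}} = -\int_{-\psi_+}^{\psi_-}\cos\tilde\theta/\kappa_{\mathcal{W}}(\tilde\theta)\,d\tilde\theta$, and since $\kappa_{\mathcal{W}}<0$ and $\cos\theta>0$ on $(-\pi/2,\pi/2)$ this is manifestly positive as long as $[-\psi_+,\psi_-]\subset(-\pi/2,\pi/2)$; in the general case $\psi_\pm\in(0,\pi)$ the sign of $\cos\theta$ near the endpoints can be negative, so the first inequality is not automatic and requires the argument below.

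First I would set up the computation of $A_{\mathcal{W}}$ from \eqref{area-theta}, substituting the explicit $\kappa_{\mathcal{W}}(\theta) = -c\sin\theta - (\psi_++\psi_-)$. The key algebraic observation is that $\mathcal{W}(0)$, being a translate of itself under the flow, satisfies an ODE whose first integral gives a clean relation; concretely, differentiating the position representation and using \eqref{def-tw}, the traveling-wave condition forces $\kappa_{\mathcal{W},\theta\theta}+\kappa_{\mathcal{W}} = -c\sin\theta$ up to the constant, which is consistent with \eqref{curvature-tra}. I would then express $A_{\mathcal{W}}$ as the area swept and use integration by parts together with \eqref{bc1-theta} to reduce the double integral in \eqref{area-theta} to a single integral. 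An alternative and probably cleaner route: consider the closed curve obtained by reflecting $\mathcal{W}(0)$ across the $x$-axis, as in the proof of the isoperimetric inequality \eqref{iso-ine}; its signed area is $2A_{\mathcal{W}}$. One can relate $A_{\mathcal{W}}$ to $\int_{-\psi_+}^{\psi_-} (-1/\kappa_{\mathcal{W}})\,d\theta = L_{\mathcal{W}} = 1$ and to $\int \sin^2\theta/\kappa_{\mathcal{W}}\,d\theta$, $\int\cos^2\theta/\kappa_{\mathcal{W}}\,d\theta$ via the support-function formula: for a curve with support function $h(\theta)$ one has $\mathrm{Area} = \tfrac12\int h(h+h_{\theta\theta})\,d\theta = \tfrac12\int h\,(-1/\kappa)\,d\theta$ over the relevant $\theta$-range, and $h = -\langle X,\mathcal{N}\rangle$-type quantities can be read from \eqref{posi-theta}.

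The key steps, in order, would be: (1) record $\kappa_{\mathcal{W}}(\theta) = -c\sin\theta-(\psi_++\psi_-)<0$ on $[-\psi_+,\psi_-]$, noting $\kappa_{\mathcal{W}}$ is negative throughout by Theorem \ref{thm:ex-traveling}; (2) use the constraint $\int_{-\psi_+}^{\psi_-} (-1/\kappa_{\mathcal{W}})\,d\theta = 1$ and $\int_{-\psi_+}^{\psi_-}\sin\theta/\kappa_{\mathcal{W}}\,d\theta = 0$ to pin down the two free parameters (the speed $c$ and an overall scale, though here $L_{\mathcal{W}}=1$ fixes scale); (3) compute $x_{+,\mathcal{W}}-x_{-,\mathcal{W}}$ from \eqref{posi-theta} and show it is positive — when $\psi_++\psi_-\le\pi$ one can arrange $[-\psi_+,\psi_-]\subset[-\pi/2,\pi/2]$ only if both $\psi_\pm\le\pi/2$, so in general one integrates $\cos\theta/\kappa_{\mathcal{W}}$ and bounds the (possibly sign-indefinite) contributions near $\pm\pi/2$ using the explicit form of $\kappa_{\mathcal{W}}$, exploiting that $|\kappa_{\mathcal{W}}|\ge\psi_++\psi_-+c\sin\theta$ stays bounded below and that the reflected closed curve is convex hence embedded with positive area; (4) deduce $A_{\mathcal{W}}>0$ either from the embeddedness of the reflected convex curve (a convex closed curve bounds positive area) or directly by evaluating \eqref{area-theta} and showing the integrand combination is positive. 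I expect step (3)/(4) — establishing $x_{-,\mathcal{W}}<x_{+,\mathcal{W}}$ and hence that the reflected curve is genuinely a convex \emph{embedded} Jordan curve with $\mathcal{W}(0)$ in the closed upper half-plane — to be the main obstacle, since in the regime $\psi_++\psi_->\pi$ the naive sign argument fails and one must use the convexity of the doubled curve (rotation index one, $\kappa<0$ everywhere) to conclude embeddedness and positivity of area, mirroring Remark \ref{rk:pre-sim}; the cleanest finish is: the reflection of $\mathcal{W}(0)$ across the $x$-axis is a closed curve with everywhere-negative curvature and total turning $2(\psi_++\psi_-)+\text{(turning at the two corners)} = 2\pi$, hence convex and embedded, so it bounds area $2A_{\mathcal{W}}>0$, which simultaneously forces $x_{-,\mathcal{W}}<x_{+,\mathcal{W}}$.
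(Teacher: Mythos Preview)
Your proposal never commits to a complete argument, and the route you single out as ``cleanest'' --- reflecting $\mathcal{W}(0)$ across the $x$-axis and invoking convexity to force embeddedness --- has a genuine gap. The doubled curve is only $C^0$ at the two junction points on the $x$-axis unless $\psi_\pm=\pi/2$. If you compute the exterior turning at the right junction you get $2\psi_+-\pi$ (and $2\psi_--\pi$ at the left one); when $\psi_+>\pi/2$ or $\psi_->\pi/2$ the corner turns \emph{opposite} to the sign of $\kappa_{\mathcal{W}}$, so the doubled curve is not convex and the standard ``strictly convex closed curve $\Rightarrow$ embedded'' fact does not apply. This is precisely the regime $\psi_++\psi_->\pi$ flagged in Remark~\ref{rk:pre-sim} as delicate, so you cannot bypass it by reflection.

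The paper's proof is far more direct and you are one step away from it. You already wrote down
\[
x_{+,\mathcal{W}}-x_{-,\mathcal{W}}=-\int_{-\psi_+}^{\psi_-}\frac{\cos\theta}{\kappa_{\mathcal{W}}(\theta)}\,d\theta
=\int_{-\psi_+}^{\psi_-}\frac{\cos\theta}{c\sin\theta+(\psi_++\psi_-)}\,d\theta,
\]
but did not notice that this integral is elementary: for $c\neq0$ (i.e.\ $\psi_+\neq\psi_-$, the case $\psi_+=\psi_-$ being a circular arc and trivial) the antiderivative is $\tfrac{1}{c}\log\bigl(c\sin\theta+(\psi_++\psi_-)\bigr)$, which is well-defined since $-\kappa_{\mathcal{W}}=c\sin\theta+(\psi_++\psi_-)>0$. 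Thus
\[
x_{+,\mathcal{W}}-x_{-,\mathcal{W}}=\frac{1}{c}\log\!\left(\frac{c\sin\psi_-+(\psi_++\psi_-)}{-c\sin\psi_++(\psi_++\psi_-)}\right),
\]
and \eqref{sign-c} shows that the fraction inside the logarithm is $>1$ when $c>0$ and $<1$ when $c<0$; in either case the expression is positive. Once $x_{-,\mathcal{W}}<x_{+,\mathcal{W}}$, the concave curve $\mathcal{W}(0)$ is simple and contained in the closed upper half-plane, so $A_{\mathcal{W}}$ equals the ordinary enclosed area and is positive. No support-function manipulations or reflection arguments are needed.
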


\begin{proof}
When $\psi_+ = \psi_-$, \eqref{posi-area-tra} obviously holds since $\mathcal{W}(0)$ is an arc. 
Thus, we consider only in the case $\psi_+ \neq \psi_-$. 
According to the representation as \eqref{posi-theta}, we can see 
\[ x_{+, \mathcal{W}} = x_{-,\mathcal{W}} - \int_{-\psi_+}^{\psi_-} \dfrac{\cos\theta}{\kappa_{\mathcal{W}}(\theta)} \; d\theta. \]
Therefore, we obtain by virtue of \eqref{curvature-tra} 
\begin{equation}\label{posi-area-tra1} 
x_{+, \mathcal{W}} - x_{-,\mathcal{W}} = \int_{-\psi_+}^{\psi_-} \dfrac{\cos\theta}{c \sin\theta + (\psi_+ + \psi_-)} \; d\theta = \dfrac{1}{c} \log \left(\dfrac{c \sin \psi_- + (\psi_+ + \psi_-)}{-c \sin \psi_+ + (\psi_+ + \psi_-)} \right). 
\end{equation}
Here, the negativity of $\kappa_{\mathcal{W}} = - c \sin \theta - (\psi_+ + \psi_-)$ have used. 
From the sign of $c$ as in \eqref{sign-c}, we obtain 
\[\begin{aligned} 
&\dfrac{c \sin \psi_- + (\psi_+ + \psi_-)}{-c \sin \psi_+ + (\psi_+ + \psi_-)} > 1 \quad \mbox{if} \quad \psi_+ < \psi_-, \\
&\dfrac{c \sin \psi_- + (\psi_+ + \psi_-)}{-c \sin \psi_+ + (\psi_+ + \psi_-)} < 1 \quad \mbox{if} \quad \psi_+ > \psi_-.
\end{aligned}\]
Substituting the above inequality into \eqref{posi-area-tra1} and applying \eqref{sign-c} again, we conclude $x_{-, \mathcal{W}} < x_{+, \mathcal{W}}$. 
Hence, the profile curve $\mathcal{W}(0)$ is simple and $A_{\mathcal{W}}$ is positive since it coincides with the area enclosed by $\mathcal{W}(0)$ and the $x$-axis. 
\end{proof}

When a pair $(\mathcal{W}(0), c)$ constructs a traveling wave as \eqref{def-tw}, scaled and translated traveling waves 
\[ \mathcal{W}_{\lambda, a} (t) := \lambda \mathcal{W}(0) + (c/\lambda)t \vec{e}_1 + a \vec{e}_1 \]
can be constructed for any $\lambda > 0$ and any $a \in \mathbb{R}$, where $\vec{e}_1 = (1,0)$. 
By virtue of the positivity of $A_{\mathcal{W}}$ and the invariance under scaling and translation, we can rewrite Theorem \ref{thm:ex-traveling} as the following corollary so that it can be applied to our problem. 

\begin{cor}\label{cor:ex-tw}
For any $A^* > 0$, there exists a traveling wave $\mathcal{W}(t)$ for \eqref{apcf}--\eqref{bc2} defined as \eqref{def-tw} such that $A_{\mathcal{W}} = A^*$ and 
\[ -\int_{\mathcal{W}(0)} \kappa_{\mathcal{W}} \; ds = \psi_+ + \psi_-, \]
where $\kappa_{\mathcal{W}}$ is the curvature of $\mathcal{W}(0)$ and $A_{\mathcal{W}}$ is defined by \eqref{def-area3}. 
Furthermore, the traveling wave is concave and unique except the translation parallel to the $x$-axis. 
In addition, the wave speed $c$ and the curvature $\kappa_{\mathcal{W}}$ fulfill \eqref{sign-c} and 
\begin{equation}\label{curvature-tw} 
\kappa_{\mathcal{W}}(\theta) = - c \sin \theta - \dfrac{\psi_+ + \psi_-}{L_{\mathcal{W}}} \quad \mbox{for} \quad - \psi_+ \le \theta \le \psi_-, 
\end{equation}
where $L_{\mathcal{W}}$ is the length of $\mathcal{W}(0)$. 
\end{cor}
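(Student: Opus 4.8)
\textbf{Plan of proof for Corollary \ref{cor:ex-tw}.} The plan is to deduce the corollary from Theorem \ref{thm:ex-traveling} by exploiting the scaling and translation invariance of \eqref{apcf}--\eqref{bc2} together with the positivity of $A_{\mathcal{W}}$ just established. First I would fix the traveling wave $\mathcal{W}^0(t)$ furnished by Theorem \ref{thm:ex-traveling}, normalized so that $L_{\mathcal{W}^0} = 1$ and $-\int_{\mathcal{W}^0(0)} \kappa_{\mathcal{W}^0}\,ds = \psi_+ + \psi_-$, with wave speed $c^0$ and curvature $\kappa_{\mathcal{W}^0}(\theta) = -c^0\sin\theta - (\psi_+ + \psi_-)$. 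Denote by $A_{\mathcal{W}^0} > 0$ its area, which is positive by the preceding lemma. Then for $\lambda > 0$ and $a \in \mathbb{R}$ consider $\mathcal{W}_{\lambda,a}(t) := \lambda\mathcal{W}^0(0) + (c^0/\lambda)t\,\vec{e}_1 + a\,\vec{e}_1$.

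The key steps are then as follows. (1) Verify that $\mathcal{W}_{\lambda,a}(t)$ is again a solution of \eqref{apcf}--\eqref{bc2}: the boundary conditions \eqref{bc1} and \eqref{bc2} are invariant under horizontal translation and under scaling about a point on the $x$-axis (provided the scaling center is chosen on the axis, which it is since $\mathcal{W}^0(0)$ has its endpoints on the $x$-axis); under scaling by $\lambda$ the curvature, the length, and the normal velocity transform by factors $\lambda^{-1}$, $\lambda$, and $\lambda^{-1}$ respectively, so the right-hand side of \eqref{apcf} scales by $\lambda^{-1}$, matching $\partial_t$ of a profile moving at speed $c^0/\lambda$; hence \eqref{def-tw} holds with $c = c^0/\lambda$. (2) Compute $L_{\mathcal{W}_{\lambda,a}} = \lambda$ and $A_{\mathcal{W}_{\lambda,a}} = \lambda^2 A_{\mathcal{W}^0}$, the latter because the area functional \eqref{def-area3} is quadratic in the position. (3) Given $A^* > 0$, choose $\lambda = \sqrt{A^*/A_{\mathcal{W}^0}}$ so that $A_{\mathcal{W}_{\lambda,a}} = A^*$; set $\mathcal{W}(t) := \mathcal{W}_{\lambda,a}(t)$ for this $\lambda$ and any $a$. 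Since $\kappa_{\mathcal{W}} = \lambda^{-1}\kappa_{\mathcal{W}^0}$ and $L_{\mathcal{W}} = \lambda$, one reads off $\kappa_{\mathcal{W}}(\theta) = -(c^0/\lambda)\sin\theta - (\psi_+ + \psi_-)/\lambda = -c\sin\theta - (\psi_+ + \psi_-)/L_{\mathcal{W}}$, which is \eqref{curvature-tw}; likewise $-\int_{\mathcal{W}(0)}\kappa_{\mathcal{W}}\,ds = \psi_+ + \psi_-$ since this winding number is scale invariant. (4) The sign relation \eqref{sign-c} is inherited because $c = c^0/\lambda$ has the same sign as $c^0$, and \eqref{sign-c} holds for $c^0$ by Theorem \ref{thm:ex-traveling}. (5) Uniqueness up to horizontal translation: if $\mathcal{W}(t)$ is any traveling wave with $A_{\mathcal{W}} = A^*$ and the prescribed winding number, then rescaling it by $1/L_{\mathcal{W}}$ produces a traveling wave with length $1$ and the same winding number, so by the uniqueness clause of Theorem \ref{thm:ex-traveling} it equals $\mathcal{W}^0(0)$ up to horizontal translation; tracking the scaling back and matching areas forces the scale factor to be exactly $\lambda$, giving uniqueness up to translation.

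I expect the only genuinely delicate point to be step (1): one must check carefully that the \emph{nonlocal} term $\int_{\gamma}\kappa\,ds / L$ in \eqref{apcf} scales consistently (it does, since numerator scales by $\lambda^{-1}\cdot\lambda = 1$ and denominator by $\lambda$, giving $\lambda^{-1}$, the same as the local term $\kappa$), and that the contact-angle boundary condition \eqref{bc2} is genuinely preserved under the dilation — this requires the scaling center to lie on the $x$-axis, which is why one dilates about, say, the left endpoint of $\mathcal{W}^0(0)$, and then absorbs the resulting horizontal shift into the free parameter $a$. Everything else is bookkeeping with the scaling weights of $\kappa$, $L$, and $A$. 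The uniqueness argument in step (5) is similarly routine once one observes that fixing $A^*$ and the winding number pins down $\lambda$.
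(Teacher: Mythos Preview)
Your proposal is correct and follows exactly the approach the paper uses: the paper states the corollary as an immediate consequence of Theorem~\ref{thm:ex-traveling}, the positivity of $A_{\mathcal{W}}$ from the preceding lemma, and the scaling-translation invariance $\mathcal{W}_{\lambda,a}(t) = \lambda\mathcal{W}(0) + (c/\lambda)t\,\vec{e}_1 + a\,\vec{e}_1$, without spelling out further details. Your steps (1)--(5) are a faithful and correct elaboration of precisely this argument.
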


\begin{rk}\label{rk:posi-area-TW}
According to Theorem \ref{thm:ex-traveling} and \eqref{posi-area-tra}, there is no traveling wave such that 
\[ -\int_{\mathcal{W}(0)} \kappa_{\mathcal{W}} \; ds = \psi_+ + \psi_-, \quad A_{\mathcal{W}} \le 0. \]
However, the area $A(0)$ can be non-positive if we do not assume the concavity or simplicity of the initial curve $\gamma(0)$ (see Figure \ref{fig:non-positive}). 
In this case, the asymptotic behavior of a global-in-time solution and its singularities are not obvious. 
\end{rk}

\begin{figure}[t]
\begin{center}
\scalebox{0.35}{\includegraphics{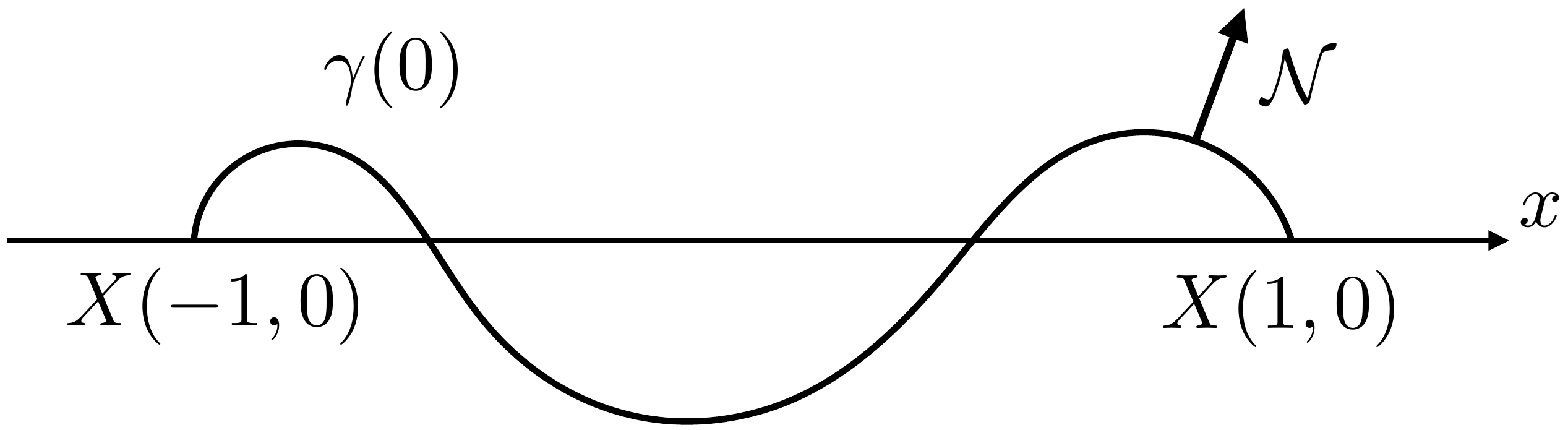}}
%\scalebox{0.30}{\includegraphics{angle2.eps}}
\end{center}
\caption{An example of a non-concave graph with negative area.}
\label{fig:non-positive}
\end{figure}

In the present paper, our aim is to prove the convergence of a global-in-time solution for the traveling wave without the assumption of ``closeness'' between the initial curve and a traveling wave. 
The local exponential stability of the traveling wave was proved by \cite{SK} when $\psi_\pm \in (0,\pi/2)$, which is required a ``closeness'' between the solution and a traveling wave. 
The proof of the local exponential stability in \cite{SK} is based on the spectral analysis of a linearized problem for a generalized differential equation of \eqref{eq-theta} around the stationary solution $\kappa_{\mathcal{W}}$ obtained in \eqref{curvature-tw}. 
One of the key properties for proving that the spectrum of the linearized problem consists of non-positive eigenvalues is that the zero level set of $\sin \theta$ is a simple point in $[-\psi_+, \psi_-]$. 
This property of the sine function holds for general angle conditions $\psi_\pm \in (0,\pi)$. 
Therefore, the local exponential stability in \cite{SK} can be expanded to the more general case $\psi_\pm \in (0, \pi)$ as in the following theorem. 
The proof of the following theorem is the same as the proof in \cite{SK}. 

\begin{thm}[{\cite[Theorem 1.2, Theorem 5.1]{SK}}]\label{thm:local-sta} 
Let $\gamma(t)$ and $\mathcal{W}(t)$ be a global-in-time solution to \eqref{apcf}--\eqref{bc2} and the traveling wave obtained by \cite{KK} such that $A(0) = A_{\mathcal{W}}$, respectively. 
Denote $\kappa(\theta,t)$ and $\kappa_{\mathcal{W}}(\theta)$ by the curvature of $\gamma(t)$ and $\mathcal{W}(0)$, respectively. 
Assume $\|\kappa(\cdot,0) - \kappa_{\mathcal{W}}\|_{L^\infty}$ is sufficiently small. 
Then, there exists a constant $a \in \mathbb{R}$ such that, as $t \to \infty$, $\kappa(\cdot,t)$ and $\gamma(t)$ exponentially converge to $\kappa_{\mathcal{W}}$ in $C^\infty$ and $\mathcal{W}(t) + a \vec{e}_1$ in the Hausdorff metric, respectively, where $\vec{e}_1 = (1,0)$. 
\end{thm}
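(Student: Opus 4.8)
The plan is to prove local exponential stability by linearizing the curvature equation \eqref{eq-theta} about the stationary profile $\kappa_{\mathcal{W}}$ of Corollary \ref{cor:ex-tw} (with $A_{\mathcal{W}}=A(0)$) and establishing a spectral gap for the linearized operator on the subspace cut out by the preserved constraints, following \cite{SK}; only the spectral step needs the mild extension to $\psi_\pm\in(0,\pi)$. First I would record that $\kappa_{\mathcal{W}}$ is a genuine stationary solution of \eqref{eq-theta}: from \eqref{curvature-tw} one has $(\kappa_{\mathcal{W}})_{\theta\theta}+\kappa_{\mathcal{W}}+(\psi_++\psi_-)/L_{\mathcal{W}}=0$ and the Robin condition at $\theta=\mp\psi_\pm$. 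Setting $\kappa=\kappa_{\mathcal{W}}+w$ and expanding $L$ via \eqref{leng-theta}, the perturbation obeys $w_t=\mathcal{L}w+\mathcal{R}[w]$, where
\begin{equation*}
\mathcal{L}w=\kappa_{\mathcal{W}}^2\left(w_{\theta\theta}+w-\frac{\psi_++\psi_-}{L_{\mathcal{W}}^2}\int_{-\psi_+}^{\psi_-}\frac{w}{\kappa_{\mathcal{W}}^2}\,d\theta\right),
\end{equation*}
the boundary operator is the linearization $w_\theta=\cot\theta\,(w-\frac{\psi_++\psi_-}{L_{\mathcal{W}}^2}\int\kappa_{\mathcal{W}}^{-2}w\,d\theta)$ at $\theta=\mp\psi_\pm$, and $\mathcal{R}[w]$ collects quadratic-and-higher terms. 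The admissible perturbations lie in the codimension-two set cut out by the endpoint constraint \eqref{bc1-theta}, whose linearization is $\int\sin\theta\,\kappa_{\mathcal{W}}^{-2}w\,d\theta=0$, and the area constraint $A(t)=A_{\mathcal{W}}$ of Lemma \ref{lem:area-pre}, whose linearization $\delta A[w]=0$ is read off from \eqref{area-theta}; both are invariant under the flow.

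The heart of the proof is the spectral analysis of $\mathcal{L}$, and this is where I expect the main obstacle to lie. For $\|\kappa_0-\kappa_{\mathcal{W}}\|_{L^\infty}$ small the solution stays uniformly negative, so (as for $\kappa_{\mathcal{W}}$ itself, cf.\ Lemma \ref{lem:uni-nega-k}) the weight $\kappa_{\mathcal{W}}^2$ is strictly positive and $\mathcal{L}$ is a genuine diffusion operator with discrete spectrum bounded above. The nonlocal term is exactly the linearized area-preservation multiplier; interpreting \eqref{eq-theta} as the weighted gradient flow carrying this multiplier shows $\mathcal{L}$ to be self-adjoint in $L^2(\kappa_{\mathcal{W}}^{-2}\,d\theta)$ once restricted to the tangent space of the two constraints, so Sturm--Liouville oscillation theory applies. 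A direct computation, using $(\kappa_{\mathcal{W}})_{\theta\theta}+\kappa_{\mathcal{W}}=-(\psi_++\psi_-)/L_{\mathcal{W}}$ and $\int\kappa_{\mathcal{W}}^{-1}\,d\theta=-L_{\mathcal{W}}$, gives $\mathcal{L}\kappa_{\mathcal{W}}=0$: the zero eigenfunction is the scaling mode. Since $\kappa_{\mathcal{W}}<0$ has no interior zero it is the principal eigenfunction, so $0$ is the top of the spectrum and every other eigenvalue is strictly negative; the simple zero of $\sin\theta$ at $\theta=0$ in $[-\psi_+,\psi_-]$ (valid for all $\psi_\pm\in(0,\pi)$), entering through the endpoint constraint \eqref{bc1-theta}, is the property of \cite{SK} that rules out any further nonnegative eigenvalue. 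Finally, because the area scales as $\lambda^2$ along the family generated by $\kappa_{\mathcal{W}}$, one has $\delta A[\kappa_{\mathcal{W}}]=-2A_{\mathcal{W}}\neq0$ (using $A_{\mathcal{W}}>0$ from \eqref{posi-area-tra}), so the zero mode is transverse to the area constraint and is removed upon restriction; on the admissible subspace the spectrum therefore lies in $\{\operatorname{Re}\lambda\le-\omega\}$ for some $\omega>0$. The delicate points to verify verbatim from \cite{SK} are the self-adjointness on the constraint tangent space in the correct weighted inner product and the Sturmian exclusion of extra neutral directions.

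With the spectral gap in hand, local exponential stability follows from the linearized-stability principle for quasilinear parabolic equations in the analytic-semigroup framework already used for Theorem \ref{thm:main1} (Lemma \ref{short-v}). Working in the little-Hölder / interpolation spaces adapted to the Robin problem, I would show that for small $\|\kappa_0-\kappa_{\mathcal{W}}\|_{L^\infty}$ the solution remains in a neighbourhood of $\kappa_{\mathcal{W}}$ where $\kappa$ stays uniformly negative (keeping $\kappa^2$ nondegenerate and the estimates of Lemma \ref{lem:lip-theta} available), and that $\|w(\cdot,t)\|\le Ce^{-\omega t}\|w_0\|$; parabolic smoothing together with the higher-order bounds of Lemma \ref{short-v}(ii) then upgrades this to exponential convergence $\kappa(\cdot,t)\to\kappa_{\mathcal{W}}$ in $C^\infty$.

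To identify the translation constant $a$ and obtain Hausdorff convergence, I would compare the left endpoint \eqref{left-theta} with that of $\mathcal{W}(t)=\mathcal{W}(0)+ct\vec{e}_1$, noting that the wave speed satisfies $c=-\frac{1}{\sin\psi_-}(\kappa_{\mathcal{W}}(\psi_-)+(\psi_++\psi_-)/L_{\mathcal{W}})$. Since $\kappa(\psi_-,t)\to\kappa_{\mathcal{W}}(\psi_-)$ and $L(t)\to L_{\mathcal{W}}$ exponentially, the integral defining $x(-1,t)-ct$ in \eqref{left-theta} converges to a finite limit, which I take as $a$; feeding the $C^\infty$ convergence of $\kappa$ and the convergence of the endpoint into the representation \eqref{posi-theta} then yields $\gamma(t)\to\mathcal{W}(t)+a\vec{e}_1$ in the Hausdorff metric, completing the proof.
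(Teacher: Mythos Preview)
Your proposal is correct and aligns with the paper's treatment. The paper does not give a proof of this theorem at all: it states explicitly that ``the proof of the following theorem is the same as the proof in \cite{SK}'', and the only remark it adds is precisely the one you isolate---that the key spectral property (the zero set of $\sin\theta$ being a single point in $[-\psi_+,\psi_-]$) continues to hold for all $\psi_\pm\in(0,\pi)$, so the argument of \cite{SK} carries over verbatim. Your sketch of the linearization, the identification of $\kappa_{\mathcal{W}}$ as the zero mode removed by the area constraint, and the passage from exponential convergence of $\kappa$ to Hausdorff convergence via \eqref{left-theta}--\eqref{posi-theta} is exactly the mechanism the paper invokes (see the sentence following the theorem statement).
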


The convergence of $\gamma(t)$ can be easily seen from the convergence of $\kappa$ in the representations \eqref{left-theta} and \eqref{posi-theta}. 
According to Theorem \ref{thm:local-sta}, it is enough to prove that the curvature of a global-in-time solution is sufficiently close to the curvature of a traveling wave at some time to prove the global stability of the traveling waves. 

\section{Global stability of traveling waves} \label{sec:main}
In this section, we prove the global stability of the traveling waves obtained by Corollary \ref{cor:ex-tw} in the sense of Theorem \ref{thm:main2}. 
Under the assumptions {\rm (A4)}--{\rm (A7)}, $\kappa(\cdot,t)$ is uniformly bounded in $C^\infty([-\psi_+, \psi_-])$ by virtue of \eqref{uni-nega-k} and $\frac{\partial}{\partial \theta} = \frac{1}{\kappa} \cdot \frac{\partial}{\partial s}$. 
Therefore, we can apply the Ascoli-Arzel\'a theorem to $\kappa(\theta,t)$. 
In the following, we study the $\omega$-limit points of $\{\kappa(\theta,t)\}_{t \ge 0}$. 
 
We note that the variable $\theta$ and the curvature $\kappa$ does not depend on the translation of $\gamma(t)$ parallel to the $x$-axis. 
As mentioned in Remark \ref{rk:ene}, the functional $E(\gamma)$ is not bounded from blow because of the dependence of $E(\gamma)$ on the translation. 
Therefore, we find a Lyapunov functional of $\kappa(\theta,t)$ to analyze the $\omega$-limit points of $\{\kappa(\theta,t)\}_{t \ge 0}$. 

\begin{defi}
Define functionals $F_1$ and $F_2$ as 
\[\begin{aligned} 
F_1(t) :=&\; \int_{-\psi_+}^{\psi_-} -\dfrac{1}{2} \kappa_{\theta}^2(\theta,t) + \dfrac{1}{2} \kappa^2(\theta,t) + \kappa(\theta,t)\dfrac{\psi_+ + \psi_-}{L(t)} \; d\theta \\
&\; +\cot \psi_- \left(\dfrac{1}{2} \kappa^2(\psi_-,t) + \kappa(\psi_-,t) \dfrac{\psi_+ + \psi_-}{L(t)} \right) \\
&\; + \cot\psi_+ \left(\dfrac{1}{2}\kappa^2(-\psi_+,t) + \kappa(-\psi_+,t) \dfrac{\psi_+ + \psi_-}{L(t)}\right), \\
F_2(t) :=&\; \dfrac{(\psi_+ + \psi_-)^2}{2(L(t))^2} \{ (\psi_+ + \psi_-) + \cot \psi_- + \cot \psi_+ \}. 
\end{aligned}\]
Let $F(t)$ be the sum of $F_1(t)$ and $F_2(t)$ and denote $\tilde{F}(t)$ by 
\begin{equation}\label{def-lia} 
\tilde{F}(t) := (L(t))^2{\rm exp}\left(-\dfrac{2\int_{-\psi_+}^{\psi_-} \log(-\kappa) \; d\theta}{\psi_++\psi_-} \right) F(t). 
\end{equation}
\end{defi}

We prove $-\tilde{F}(t)$ is a Lyapunov function. 
Here, we note that H\"{o}lder's inequality implies 
\begin{equation}\label{holder-ine}
\left(\int_{-\psi_+}^{\psi_-} \dfrac{\kappa_t}{\kappa} \; d\theta \right)^2 \le (\psi_+ + \psi_-)\int_{-\psi_+}^{\psi_-} \dfrac{\kappa_t^2}{\kappa^2} \; d\theta. 
\end{equation}

\begin{lem}\label{lem:liapunov}
For $t \in (0,\infty)$, the functional $\tilde{F}(t)$ satisfies 
\begin{equation}\label{liapunov} 
\dfrac{d}{dt} \tilde{F}(t) = (L(t))^2{\rm exp}\left(-\dfrac{2\int_{-\psi_+}^{\psi_-} \log(-\kappa) \; d\theta}{\psi_++\psi_-} \right) \left(\int_{-\psi_+}^{\psi_-} \dfrac{\kappa_t^2}{\kappa^2} \; d\theta - \dfrac{1}{\psi_++\psi_-} \left(\int_{-\psi_+}^{\psi_-} \dfrac{\kappa_t}{\kappa} \; d\theta \right)^2\right). 
\end{equation}
\end{lem}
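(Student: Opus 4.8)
The plan is to differentiate $\tilde F(t)$ directly using the product rule, organizing the computation around the three factors $(L(t))^2$, the exponential weight, and $F(t)$. First I would set the normal velocity $V := \kappa + (\psi_++\psi_-)/L(t)$ and record from \eqref{eq-theta} that $\kappa_t = \kappa^2(\kappa_{\theta\theta}+V)$, so that $\kappa_t/\kappa^2 = \kappa_{\theta\theta}+V$ and $\kappa_t/\kappa = \kappa(\kappa_{\theta\theta}+V)$; these will be the quantities appearing on the right-hand side of \eqref{liapunov}, so the strategy is to massage everything into these forms. The derivative of the logarithmic integral is $\frac{d}{dt}\int_{-\psi_+}^{\psi_-}\log(-\kappa)\,d\theta = \int_{-\psi_+}^{\psi_-}\kappa_t/\kappa\,d\theta$, which produces the cross term $-\frac{2}{\psi_++\psi_-}\int \kappa_t/\kappa\,d\theta$ times $\tilde F$; combined with the contribution from $\frac{d}{dt}(L(t))^2 = 2L(t)L'(t)$ and the formula \eqref{deri-ene} for $L'(t)$ rewritten in terms of $\kappa(\pm\psi_\pm,t)$ and $\int\kappa^2\,ds = -\int\kappa\,d\theta$ (using $ds = -d\theta/\kappa$), these "outer" terms should reorganize.

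The main work is computing $F'(t) = F_1'(t)+F_2'(t)$. Here I would differentiate under the integral sign in $F_1$, using $\kappa_t = \kappa^2(\kappa_{\theta\theta}+V)$ and integrating by parts in $\theta$ to move derivatives off $\kappa_{\theta\theta}$; the boundary terms at $\theta = \pm\psi_\pm$ must be handled with the Neumann-type boundary condition $\kappa_\theta = \cot\theta\,(\kappa + (\psi_++\psi_-)/L(t)) = \cot\theta\cdot V$ from the second line of \eqref{eq-theta}, which is exactly why the $\cot\psi_\pm$ terms are built into $F_1$ — they are designed so that the boundary contributions from the integration by parts cancel against the time derivatives of the explicit boundary terms. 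The term $\int \kappa(\psi_++\psi_-)/L(t)\,d\theta$ and the whole of $F_2$ carry the $L'(t)/L(t)^2$ dependence; one checks that $F$ is, up to the weight, essentially $\frac12\int(\kappa_{\theta\theta}+V)\cdot(\text{something})$ — more precisely the point is that $F(t) = \frac{d}{dt}E(\gamma(t))$-type quantities reorganize so that after multiplying by the weight and differentiating, the algebra collapses to $\int \kappa_t^2/\kappa^2\,d\theta$ minus the square term. Throughout I would use Lemma~\ref{lem:uni-nega-k} and Lemma~\ref{lem:lip-theta} to ensure all integrals and boundary evaluations make sense (no division by zero, everything finite) for $t$ in any compact subinterval of $(0,\infty)$.

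The hard part will be the bookkeeping in the integration by parts for $F_1'(t)$: one must carefully track the $\kappa_{\theta\theta\theta}$ and $\kappa_\theta\kappa_{\theta\theta}$ terms that arise, show that the genuinely third-order pieces integrate to pure boundary terms, and verify that those boundary terms — together with the time derivatives of $\kappa^2(\pm\psi_\pm,t)$ and $\kappa(\pm\psi_\pm,t)(\psi_++\psi_-)/L(t)$ after substituting the boundary condition $\kappa_\theta(\pm\psi_\pm) = \cot(\pm\psi_\pm)V(\pm\psi_\pm)$ and the formula \eqref{bdry-ti-V}-analogue — cancel exactly. I expect the sign conventions ($\cot\psi_+$ versus $-\cot(-\psi_+)$, the orientation of $ds$ versus $d\theta$) to be the most error-prone point, and the cleanest way to organize it is to first prove the identity pointwise/formally and then note that \eqref{holder-ine} is what guarantees $\frac{d}{dt}\tilde F(t)\ge 0$, i.e.\ $-\tilde F$ is nonincreasing, with equality only when $\kappa_t/\kappa$ is constant in $\theta$. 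Once \eqref{liapunov} is established, the factor in front is strictly positive by Lemma~\ref{lem:uni-nega-k} and (A7), so the monotonicity is genuine.
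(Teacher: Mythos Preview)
Your plan is correct and follows essentially the same route as the paper: differentiate $\tilde F$ by the product rule, compute $F_1'(t)$ by differentiating under the integral and integrating by parts once in $\theta$, and use the boundary condition $\kappa_\theta=\cot\theta\,V$ to cancel the boundary contributions against the time derivatives of the explicit $\cot\psi_\pm$ terms in $F_1$. Two remarks that will save you work. First, no third-order terms $\kappa_{\theta\theta\theta}$ ever appear: a single integration by parts turns $-\kappa_\theta\kappa_{\theta t}$ into $\kappa_{\theta\theta}\kappa_t$ plus boundary, and then $\kappa_t(\kappa_{\theta\theta}+\kappa+(\psi_++\psi_-)/L)=\kappa_t^2/\kappa^2$ directly from the equation; the bookkeeping you anticipate is lighter than you expect. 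Second, rather than invoking \eqref{deri-ene} for $L'(t)$, it is cleaner (and this is what the paper does) to differentiate $L(t)=-\int_{-\psi_+}^{\psi_-}d\theta/\kappa$ to get $L'(t)=\int\kappa_t/\kappa^2\,d\theta$ and then substitute $\kappa_t/\kappa^2=\kappa_{\theta\theta}+V$; one integration by parts then yields the closed identity
\[
\frac{L'(t)}{L(t)}=\frac{2F(t)}{\psi_++\psi_-}-\frac{1}{\psi_++\psi_-}\int_{-\psi_+}^{\psi_-}\frac{\kappa_t}{\kappa}\,d\theta,
\]
together with the companion identity $\int\kappa_t/\kappa\,d\theta = 2F_1(t)-\frac{\psi_++\psi_-}{L}\big(\int\kappa\,d\theta+\kappa(\psi_-)\cot\psi_-+\kappa(-\psi_+)\cot\psi_+\big)$. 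These two relations are exactly what convert the residual $L'$-terms in $F_1'(t)$ into the $-\frac{1}{\psi_++\psi_-}(\int\kappa_t/\kappa\,d\theta)^2$ piece, and the whole proof then reduces to recognizing that the left side of the resulting equality is $(\text{weight})^{-1}\,d\tilde F/dt$.
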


\begin{proof}
We obtain by applying \eqref{eq-theta} and integration by parts 
\begin{equation}\label{deri-lia}
\begin{aligned}
\dfrac{d}{dt} F_1(t) =&\; \int_{-\psi_+}^{\psi_-} \kappa_t \left(\kappa_{\theta\theta} + \kappa + \dfrac{\psi_+ + \psi_-}{L(t)}\right) \; d\theta \\
&\; - \dfrac{\psi_+ + \psi_-}{(L(t))^2} L'(t) \left( \int_{-\psi_+}^{\psi_-} \kappa \; d\theta + \kappa(\psi_-,t) \cot\psi_- + \kappa(-\psi_+, t) \cos\psi_+ \right) \\
=&\; \int_{-\psi_+}^{\psi_-} \dfrac{\kappa_t^2}{\kappa^2}\; d\theta - \dfrac{\psi_+ + \psi_-}{(L(t))^2} L'(t) \left( \int_{-\psi_+}^{\psi_-} \kappa \; d\theta + \kappa(\psi_-,t) \cot\psi_- + \kappa(-\psi_+, t) \cos\psi_+ \right). 
\end{aligned}
\end{equation}
We calculate the second term of the right hand side of \eqref{deri-lia}. 
By applying \eqref{eq-theta} and integration by parts, we have 
\begin{equation}\label{lia-1}
\int_{-\psi_+}^{\psi_-} \dfrac{\kappa_t}{\kappa} \; d\theta = 2 F_1(t) - \dfrac{\psi_+ + \psi_-}{L(t)} \left( \int_{-\psi_+}^{\psi_-} \kappa \; d\theta + \kappa (\psi_-, t) \cot\psi_- + \kappa(-\psi_+, t) \cot\psi_+ \right).
\end{equation}
Furthermore, \eqref{lia-1} and \eqref{eq-theta} imply
\begin{equation}\label{lia-2}
\begin{aligned}
\dfrac{L'(t)}{L(t)} =&\; \dfrac{1}{L(t)} \int_{-\psi_+}^{\psi_-} \dfrac{\kappa_t}{\kappa^2} \; d\theta \\
=&\; \dfrac{1}{L(t)} \left(\int_{-\psi_+}^{\psi_-} \kappa \; d\theta + \kappa(\psi_-,t) \cot \psi_- + \kappa(-\psi_+,t) \cot\psi_+ \right) + \dfrac{2}{\psi_++\psi_-} F_2(t) \\
=&\; \dfrac{2F(t)}{\psi_+ + \psi_-} - \dfrac{1}{\psi_+ + \psi_-} \int_{-\psi_+}^{\psi_-} \dfrac{\kappa_t}{\kappa} \; d\theta. 
\end{aligned}
\end{equation}
By substituting \eqref{lia-1} and \eqref{lia-2} into \eqref{deri-lia}, we obtain 
\[\begin{aligned} 
\dfrac{d}{dt} F_1(t) =&\; \int_{-\psi_+}^{\psi_-} \dfrac{\kappa_t^2}{\kappa^2} \; d\theta - 2\dfrac{L'(t)}{L(t)} F_1(t) + \dfrac{L'(t)}{L(t)} \int_{-\psi_+}^{\psi_-} \dfrac{\kappa_t}{\kappa} \; d\theta \\
=&\; \int_{-\psi_+}^{\psi_-} \dfrac{\kappa_t^2}{\kappa^2} \; d\theta - 2\dfrac{L'(t)}{L(t)} F_1(t) + \dfrac{2F(t)}{\psi_+ + \psi_-}\int_{-\psi_+}^{\psi_-} \dfrac{\kappa_t}{\kappa} \; d\theta - \dfrac{1}{\psi_++\psi_-} \left( \int_{-\psi_+}^{\psi_-} \dfrac{\kappa_t}{\kappa} \; d\theta \right)^2. 
\end{aligned}\]
Since $F_2'(t)$ coincides with $-2L'(t)F_2(t)/ L(t)$, the equality is equivalent to 
\[
\dfrac{d}{dt} F(t) + 2\dfrac{L'(t)}{L(t)} F(t) - \dfrac{2F(t)}{\psi_+ + \psi_-} \int_{-\psi_+}^{\psi_-} \dfrac{\kappa_t}{\kappa} \; d\theta = \int_{-\psi_+}^{\psi_-} \dfrac{\kappa_t^2}{\kappa^2} - \dfrac{1}{\psi_++\psi_-} \left(\int_{-\psi_+}^{\psi_-} \dfrac{\kappa_t}{\kappa} \; d\theta \right)^2. 
\]
This concludes \eqref{liapunov}. 
\end{proof}

Since the equality holds in \eqref{holder-ine} only if $\kappa_t = \alpha \kappa$ for some $\alpha \in \mathbb{R}$, we need the following lemma to analyze the $\omega$-limit points of $\{\kappa(\theta,t)\}_{t \ge 0}$. 

\begin{lem}\label{lem:lim-kappa}
Assume a negative valued function $\hat{\kappa}\in C^\infty([-\psi,\psi_+])$ and a constant $\alpha \in \mathbb{R}$ satisfies 
\begin{align}
& \alpha \hat{\kappa} = \hat{\kappa}^2 \left( \hat{\kappa}_{\theta \theta} + \hat{\kappa} + \frac{\psi_+ + \psi_-}{L(\hat{\kappa})}\right) \quad \mbox{for} \quad -\psi_+ < \theta < \psi_-, \label{as-lim-kappa-1}\\
& \hat{\kappa}_{\theta} = \cot \theta \left(\hat{\kappa} + \frac{\psi_+ + \psi_-}{L(\hat{\kappa})}\right) \quad \mbox{for} \quad \theta = \mp \psi_\pm, \label{as-lim-kappa-2} \\
& \int_{-\psi_+}^{\psi_-} \dfrac{\sin\theta}{\hat{\kappa}(\theta)} \; d\theta = 0, \label{as-lim-kappa-3} \\
& \dfrac{1}{2} \left(-\int_{-\psi_+}^{\psi_-} \dfrac{\sin\theta}{\hat{\kappa}(\theta)} \int_{\theta}^{\psi_-} \dfrac{\cos\tilde{\theta}}{\hat{\kappa}(\tilde{\theta})} \; d\tilde{\theta}d\theta  + \int_{-\psi_+}^{\psi_-} \dfrac{\cos \theta}{\hat{\kappa} (\theta)} \int_{\theta}^{\psi_-} \dfrac{\sin \tilde{\theta}}{\hat{\kappa}(\tilde{\theta})} \; d\tilde{\theta} d\theta \right) > 0, \label{as-lim-kappa-4}
\end{align}
where $L(\hat{\kappa}) = -\int_{-\psi_+}^{\psi_-} \frac{d\theta}{\hat{\kappa}}$. 
Then, $\alpha$ is $0$ and $\hat{\kappa}$ coincides with $\kappa_{\mathcal{W}}$ obtained in Corollary \ref{cor:ex-tw} with 
\begin{equation}\label{area-lim-kappa} 
A^* = \dfrac{1}{2} \left(-\int_{-\psi_+}^{\psi_-} \dfrac{\sin\theta}{\hat{\kappa}(\theta)} \int_{\theta}^{\psi_-} \dfrac{\cos\tilde{\theta}}{\hat{\kappa}(\tilde{\theta})} \; d\tilde{\theta}d\theta  + \int_{-\psi_+}^{\psi_-} \dfrac{\cos \theta}{\hat{\kappa} (\theta)} \int_{\theta}^{\psi_-} \dfrac{\sin \tilde{\theta}}{\hat{\kappa}(\tilde{\theta})} \; d\tilde{\theta} d\theta \right). 
\end{equation}
\end{lem}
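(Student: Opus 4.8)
The plan is to show that the hypotheses \eqref{as-lim-kappa-1}--\eqref{as-lim-kappa-4} force $\hat\kappa$ to be a stationary solution of \eqref{eq-theta} (i.e.\ $\alpha=0$) and then invoke the uniqueness part of Corollary \ref{cor:ex-tw}. First I would test the equation \eqref{as-lim-kappa-1} against the function $1/\hat\kappa$ and integrate over $[-\psi_+,\psi_-]$. Dividing \eqref{as-lim-kappa-1} by $\hat\kappa$ gives $\alpha = \hat\kappa(\hat\kappa_{\theta\theta}+\hat\kappa) + \hat\kappa(\psi_++\psi_-)/L(\hat\kappa)$ pointwise, so multiplying by $1/\hat\kappa$ and integrating yields
\[
\alpha \int_{-\psi_+}^{\psi_-}\frac{d\theta}{\hat\kappa} = \int_{-\psi_+}^{\psi_-}(\hat\kappa_{\theta\theta}+\hat\kappa)\,d\theta + \frac{(\psi_++\psi_-)^2}{L(\hat\kappa)}\cdot\frac{?}{}
\]
— more precisely I would integrate $\alpha/\hat\kappa = \hat\kappa_{\theta\theta}+\hat\kappa+(\psi_++\psi_-)/L(\hat\kappa)$ directly. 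Integration by parts on $\int\hat\kappa_{\theta\theta}$ produces the boundary terms $\hat\kappa_\theta(\psi_-)-\hat\kappa_\theta(-\psi_+)$, which are rewritten using the Neumann-type boundary condition \eqref{as-lim-kappa-2}: $\hat\kappa_\theta(\pm\psi_\pm) = \pm\cot\psi_\pm(\hat\kappa(\pm\psi_\pm)+(\psi_++\psi_-)/L(\hat\kappa))$ (with the sign dictated by $\theta=\mp\psi_\pm$). This produces an algebraic identity relating $\alpha L(\hat\kappa)$ (recall $L(\hat\kappa)=-\int d\theta/\hat\kappa$) to $\int\hat\kappa\,d\theta$, the two boundary values of $\hat\kappa$, and $\cot\psi_\pm$.

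Next, the key observation is that this identity, combined with the definition of $F(t)$ and the computation already carried out in Lemma \ref{lem:liapunov}, pins down $\alpha$. Indeed, in Lemma \ref{lem:liapunov} the expression \eqref{lia-1} shows $\int \kappa_t/\kappa\,d\theta = 2F(t) - (\psi_++\psi_-)/L(t)\bigl(\int\kappa\,d\theta + \kappa(\psi_-,t)\cot\psi_- + \kappa(-\psi_+,t)\cot\psi_+\bigr)$, and for the present time-independent profile with $\hat\kappa_t$ replaced by $\alpha\hat\kappa$ this reads $\alpha(\psi_++\psi_-) = 2F - (\psi_++\psi_-)/L(\hat\kappa)(\cdots)$. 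Meanwhile \eqref{lia-2} gives $L'(t)/L(t) = 2F(t)/(\psi_++\psi_-) - (1/(\psi_++\psi_-))\int\kappa_t/\kappa\,d\theta$; since for the profile $L'=0$ (it is stationary under the scaling-normalized flow — or, more carefully, since $\hat\kappa_t = \alpha\hat\kappa$ forces $L$ to evolve only by a scaling factor which does not affect $\theta$ or the shape), one deduces a relation that forces $\alpha=0$ together with $2F=(\psi_++\psi_-)L'/L=0$. The cleanest route: plug $\kappa_t=\alpha\hat\kappa$ into the Lyapunov identity \eqref{liapunov}. The right side becomes $(L^2)\mathrm{exp}(\cdots)(\alpha^2(\psi_++\psi_-) - \alpha^2(\psi_++\psi_-)/(\psi_++\psi_-)\cdot(\psi_++\psi_-)) = 0$ — equality in H\"older \eqref{holder-ine} — so $d\tilde F/dt = 0$; but independently $d\tilde F/dt$ for a profile evolving by $\hat\kappa_t=\alpha\hat\kappa$ can be computed to be (something)$\cdot\alpha$ times a strictly signed quantity unless $\alpha = 0$. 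The honest statement is simpler: I will show $\alpha=0$ directly from the integrated identity above together with $\int(\sin\theta/\hat\kappa)\,d\theta=0$, which is exactly condition \eqref{as-lim-kappa-3}; this last condition is what makes the $\hat\kappa\equiv$const solutions of the ODE $\alpha/\hat\kappa=\hat\kappa_{\theta\theta}+\hat\kappa+(\psi_++\psi_-)/L$ with $\alpha\neq0$ incompatible, because such solutions are of the form $\hat\kappa(\theta) = -a\sin\theta - b$ only when $\alpha=0$ (for $\alpha\neq0$ the ODE $\hat\kappa_{\theta\theta}+\hat\kappa = \alpha/\hat\kappa - (\psi_++\psi_-)/L$ is genuinely nonlinear and its solutions cannot simultaneously satisfy both Robin conditions \eqref{as-lim-kappa-2} and the constraint \eqref{as-lim-kappa-3}).

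Once $\alpha=0$ is established, \eqref{as-lim-kappa-1}--\eqref{as-lim-kappa-2} say precisely that $\hat\kappa$ is a stationary solution of \eqref{eq-theta}, i.e.\ the curvature of a traveling wave; \eqref{as-lim-kappa-3} is equivalent to the boundary condition \eqref{bc1} holding, and \eqref{as-lim-kappa-4} says that the corresponding signed area, which by \eqref{area-theta}/\eqref{area-lim-kappa} equals the displayed $A^*$, is strictly positive. By Corollary \ref{cor:ex-tw} there is a unique (up to $x$-translation) concave traveling wave with winding number $\psi_++\psi_-$ and with prescribed area $A^*>0$, and its curvature is $\kappa_{\mathcal W}$ given by \eqref{curvature-tw}. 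Since $\hat\kappa$ is negative (hypothesis) and concave-profile, it must coincide with that $\kappa_{\mathcal W}$; indeed one can even verify directly that the general solution of $\hat\kappa_{\theta\theta}+\hat\kappa = -(\psi_++\psi_-)/L(\hat\kappa)$ is $\hat\kappa(\theta)=-c\sin\theta - d\cos\theta - (\psi_++\psi_-)/L(\hat\kappa)$ and that the two Robin conditions \eqref{as-lim-kappa-2} force $d=0$ and fix $L(\hat\kappa)$, then \eqref{as-lim-kappa-3} is automatic by oddness of $\sin\theta$ against the even-plus-constant part — actually it is the consistency equation determining $c$ — matching \eqref{curvature-tw} exactly.

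The main obstacle I anticipate is the step forcing $\alpha=0$: one must rule out nontrivial solutions of the nonlinear ODE \eqref{as-lim-kappa-1} with $\alpha\neq0$ that still satisfy all of \eqref{as-lim-kappa-2}--\eqref{as-lim-kappa-3}. The integration-by-parts identity gives one scalar relation, which is not obviously enough; the extra leverage must come from a second independent integral identity — for instance testing \eqref{as-lim-kappa-1} against $1$ (not $1/\hat\kappa$) to get $\int\alpha\,d\theta\cdot(\text{something}) $, or exploiting \eqref{as-lim-kappa-3} by differentiating a suitable product — or, most robustly, from re-reading the Lyapunov computation: substituting $\kappa_t=\alpha\hat\kappa$ into both \eqref{lia-1} and \eqref{lia-2} and using $L'/L = \alpha - (\text{from the scaling structure})$ yields two equations in the unknowns $\alpha$ and $F$, whose only solution consistent with the sign information in \eqref{holder-ine} (equality case) and with $\hat\kappa<0$ is $\alpha=0$. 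I expect the write-up here to require care about the precise boundary signs ($\theta=\mp\psi_\pm$ convention) and about whether $L$ is constant along the putative self-similar profile.
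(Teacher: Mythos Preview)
Your proposal correctly identifies the overall strategy --- show $\alpha=0$, then invoke Corollary~\ref{cor:ex-tw} --- and your treatment of the second step is fine. But the first step, forcing $\alpha=0$, has a genuine gap that you yourself flag: none of the test functions or identities you propose actually closes.

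Testing \eqref{as-lim-kappa-1} against $1/\hat\kappa$ (equivalently, integrating $\alpha/\hat\kappa=\hat\kappa_{\theta\theta}+\hat\kappa+(\psi_++\psi_-)/L$) yields a single scalar relation between $\alpha L$, $\int\hat\kappa\,d\theta$ and the boundary values $\hat\kappa(\mp\psi_\pm)$; this is exactly the content of \eqref{lia-2} evaluated at the profile and does not determine $\alpha$. Testing against $\sin\theta/\hat\kappa^2$ gives $0=0$ (it is the infinitesimal version of the constraint \eqref{as-lim-kappa-3} being preserved). Plugging $\kappa_t=\alpha\hat\kappa$ into the Lyapunov identity \eqref{liapunov} also gives $0=0$: you are in the equality case of \eqref{holder-ine}, so both sides vanish for every $\alpha$. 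And your suggestion that ``$L'=0$ for the profile'' is not right: if one formally sets $\kappa(\cdot,t)=e^{\alpha t}\hat\kappa$, then $L(t)=e^{-\alpha t}L(\hat\kappa)$ so $L'/L=-\alpha$, and substituting into \eqref{lia-1}--\eqref{lia-2} merely reproduces the relation already obtained rather than a second independent one. You are genuinely one idea short.

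The paper supplies the missing test function: the \emph{support function}
\[
\hat S(\theta) := \sin\theta\int_\theta^{\psi_-}\frac{\cos\tilde\theta}{\hat\kappa(\tilde\theta)}\,d\tilde\theta - \cos\theta\int_\theta^{\psi_-}\frac{\sin\tilde\theta}{\hat\kappa(\tilde\theta)}\,d\tilde\theta,
\]
which satisfies $\hat S_{\theta\theta}+\hat S=-1/\hat\kappa$. Multiplying \eqref{as-lim-kappa-1} by $\hat S/\hat\kappa^2$ and integrating, the left side becomes $\alpha\int_{-\psi_+}^{\psi_-}\hat S/\hat\kappa\,d\theta$, which is exactly $-2\alpha$ times the area appearing in \eqref{as-lim-kappa-4}. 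On the right side, two integrations by parts transfer $\hat\kappa_{\theta\theta}+\hat\kappa$ onto $\hat S$ via $\hat S_{\theta\theta}+\hat S=-1/\hat\kappa$, and the resulting boundary terms cancel against the remaining integrals once \eqref{as-lim-kappa-2} and \eqref{as-lim-kappa-3} are used; the right side is identically zero. Hence $\alpha\cdot(\text{area})=0$, and the strict positivity \eqref{as-lim-kappa-4} forces $\alpha=0$. This is the single nontrivial idea you were missing; the remainder of your outline then goes through.
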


\begin{proof}
Let $\hat{\gamma}$ be the plane curve constructed as
\[ \hat{\gamma} = \left\{ \left( - \int_{\theta}^{\psi_-} \dfrac{\cos\tilde{\theta}}{\hat{\kappa} (\tilde{\theta}, t)} \; d\tilde{\theta}, -\int_{\theta}^{\psi_-} \dfrac{\sin\tilde{\theta}}{\hat{\kappa}(\tilde{\theta})} \; d\tilde{\theta} \right) : -\psi_+ \le \theta \le \psi_- \right\}. \]
First, we prove $\alpha=0$. 
Denote $\hat{S}(\theta)$ by the support function of $\hat{\gamma}$, namely, 
\[ \hat{S}(\theta) := \sin \theta \int_{\theta}^{\psi_-} \dfrac{\cos\tilde{\theta}}{\hat{\kappa}(\tilde{\theta})} \; d\tilde{\theta} - \cos \theta \int_{\theta}^{\psi_-} \dfrac{\sin\tilde{\theta}}{\hat{\kappa}(\tilde{\theta})} \; d\tilde{\theta} \quad \mbox{for} \quad -\psi_+ \le \theta \le \psi_-. \]
By a simple calculation, we obtain 
\begin{align}
&\hat{S}_{\theta} (\theta) = \cos \theta \int_{\theta}^{\psi_-} \dfrac{\cos \tilde{\theta}}{\hat{\kappa}(\tilde{\theta})} \; d\tilde{\theta} + \sin \theta \int_{\theta}^{\psi_-} \dfrac{\sin\tilde{\theta}}{\hat{\kappa}(\tilde{\theta})} \; d\tilde{\theta}, \label{S-first}\\
&\hat{S}_{\theta\theta} + \hat{S} = -1/\hat{\kappa}. \label{S-second}
\end{align}
By integrating \eqref{S-second} and using the boundary condition \eqref{as-lim-kappa-3}, we have 
\begin{equation}\label{lim-kappa-3}
\int_{-\psi_+}^{\psi_-} \hat{S}(\theta) \; d\theta = L(\hat{\kappa}) + \cos\psi_+ \int_{-\psi_+}^{\psi_-} \dfrac{\cos\theta}{\hat{\kappa}(\theta)} \; d\theta. 
\end{equation}
Multiplying \eqref{as-lim-kappa-1} by $\hat{S}/\hat{\kappa}^2$ and integrating it, we obtain 
\begin{equation}\label{lim-kappa-4}
\int_{-\psi_+}^{\psi_-} \dfrac{\alpha\hat{S}}{\hat{\kappa}} \; d\theta = \int_{-\psi_+}^{\psi_-} \left( \hat{\kappa}_{\theta \theta} + \hat{\kappa} + \frac{\psi_+ + \psi_-}{L(\hat{\kappa})}\right) \hat{S} \; d\theta. 
\end{equation}
Now, we calculate the both sides of \eqref{lim-kappa-4}. 
For the right hand side, applying the integration by parts, \eqref{as-lim-kappa-2}, \eqref{as-lim-kappa-3}, \eqref{S-first}, \eqref{S-second} and \eqref{lim-kappa-3}, we obtain 
\begin{equation}\label{lim-kappa-5}
\begin{aligned}
&\; \int_{-\psi_+}^{\psi_-} \left( \hat{\kappa}_{\theta \theta} + \hat{\kappa} + \frac{\psi_+ + \psi_-}{L(\hat{\kappa})}\right) \hat{S} \; d\theta \\
= &\; \int_{-\psi_+}^{\psi_-} \hat{\kappa} ( \hat{S}_{\theta\theta} + \hat{S}) \; d\theta + \dfrac{\psi_+ + \psi_-}{L(\hat{\kappa})} \int_{-\psi_+}^{\psi_-} \hat{S}\; d\theta + [\hat{\kappa}_\theta \hat{S} - \hat{\kappa} \hat{S}_\theta]_{-\psi_+}^{\psi_-} = 0.
\end{aligned}
\end{equation}
Combining \eqref{lim-kappa-4} and \eqref{lim-kappa-5}, we have 
\[ \alpha \left(-\int_{-\psi_+}^{\psi_-} \dfrac{\sin\theta}{\hat{\kappa}(\theta)} \int_{\theta}^{\psi_-} \dfrac{\cos\tilde{\theta}}{\hat{\kappa}(\tilde{\theta})} \; d\tilde{\theta}d\theta  + \int_{-\psi_+}^{\psi_-} \dfrac{\cos \theta}{\hat{\kappa} (\theta)} \int_{\theta}^{\psi_-} \dfrac{\sin \tilde{\theta}}{\hat{\kappa}(\tilde{\theta})} \; d\tilde{\theta} d\theta\right) = \int_{-\psi_+}^{\psi_-} \dfrac{\alpha \hat{S}}{\hat{\kappa}} \; d\theta = 0. \]
By virtue of \eqref{as-lim-kappa-4}, it implies $\alpha = 0$. 

Next, we prove the last statement of Lemma \ref{lem:lim-kappa}. 
Since $\alpha$ is zero and $\hat{\kappa}$ satisfies \eqref{as-lim-kappa-1}--\eqref{as-lim-kappa-3}, $\hat{\kappa}$ is a stationary solution to \eqref{eq-theta}. 
Therefore, the moving curve constructed by \eqref{left-theta} and \eqref{posi-theta} with $\kappa(\cdot,t) \equiv \hat{\kappa}$ and arbitrary $x(-1,0)$ is a traveling wave for \eqref{apcf}--\eqref{bc2}. 
By virtue of the uniqueness of the traveling wave in Corollary \ref{cor:ex-tw}, the plane curve $\hat{\gamma}$ coincides with the profile curve $\mathcal{W}(0)$ obtained in Corollary \ref{cor:ex-tw} with \eqref{area-lim-kappa} up to the translation parallel to the $x$-axis. 
Therefore, the curvature $\hat{\kappa}$ of $\hat{\gamma}$ also coincides with the curvature $\kappa_{\mathcal{W}}$ of $\mathcal{W}(0)$. 
\end{proof}

Finally, we prove Theorem \ref{thm:main2}

\begin{proof}[Proof of Theorem \ref{thm:main2}]
Fix a positive constant $\varepsilon > 0$. 
By virtue of assumptions {\rm (A6)}, {\rm (A7)}, the higher regularity as in Lemma \ref{short-v}, \eqref{uni-nega-k} and $\frac{\partial}{\partial \theta} = \frac{1}{\kappa} \cdot \frac{\partial}{\partial s}$, we obtain the uniformly boundedness of $\|\kappa(\theta,t)\|_{C^k([-\psi_+, \psi_-])}$ with respect to $t \in [\varepsilon,\infty)$ for arbitrary $k \in \mathbb{N}$. 
We note that $\|\kappa_t(\theta,t)\|_{C^{k-2}([-\psi_+, \psi_-])}$ is also uniformly bounded by virtue of the differential equation in \eqref{eq-theta}. 
Applying the Ascoli-Arzel\'a theorem, we may see that there exists a sequence $t_i$ diverging to infinity and functions $\hat{\kappa}, \overline{\kappa} \in C^\infty([-\psi_+,\psi_-])$ such that 
\begin{equation}\label{main-1}
\kappa(\cdot,t_i) \to \hat{\kappa} \quad \mbox{in} \quad C^\infty([-\psi_+,\psi_-]), \quad \kappa_t(\cdot,t_i) \to \overline{\kappa} \quad \mbox{in} \quad C^\infty([-\psi_+, \psi_-]) 
\end{equation}
as $i \to \infty$. 
Hereafter, we prove that $\hat{\kappa}$ satisfies the assumptions in Lemma \ref{lem:lim-kappa}. 
The negativity of $\hat{\kappa}$ follows from \eqref{uni-nega-k}. 
By virtue of the representation of the length \eqref{leng-theta}, $L(t_i)$ converges to $L(\hat{\kappa})$, where $L(\hat{\kappa}) = -\int_{-\psi_+}^{\psi_-} \frac{d\theta}{\hat{\kappa}}$. 
Therefore, \eqref{as-lim-kappa-2} can be obtained by taking the limit of the boundary condition of \eqref{eq-theta}. 
The condition \eqref{as-lim-kappa-3} follows from the limit of \eqref{bc1-theta}. 
Since the area $A(t)$ represented by \eqref{area-theta} is preserved with respect time and the area $A(0)$ at initial time is positive, the limit of \eqref{area-theta} implies \eqref{as-lim-kappa-4}. 
In order to see that $\hat{\kappa}$ satisfies \eqref{as-lim-kappa-1}, it is enough to prove $\overline{\kappa} = \alpha \hat{\kappa}$ for some $\alpha \in \mathbb{R}$ by virtue of the differential equation in \eqref{eq-theta}. 

Let $\tilde{F}(t)$ be the functional defined by \eqref{def-lia}. 
We will apply Lemma \ref{lem:liapunov} to analyze the relation between the limit of the curvatures $\hat{\kappa}$ and $\overline{\kappa}$. 
We note that 
\begin{equation}\label{main-2} 
\int_{-\psi_+}^{\psi_-} \dfrac{\kappa_t^2}{\kappa^2} \; d\theta - \dfrac{1}{\psi_++\psi_-} \left(\int_{-\psi_+}^{\psi_-} \dfrac{\kappa_t}{\kappa} \; d\theta \right)^2
\end{equation}
is non-negative as we mentioned for \eqref{holder-ine}. 
By virtue of \eqref{iso-ine}, \eqref{uni-nega-k} and the uniformly boundedness of $\|\kappa(\cdot,t)\|_{C^1([-\psi_+,\psi_-])}$ with respect to $t \in [\varepsilon,\infty)$, there exist constants $M_3, M_4>0$ such that 
\begin{align}
&\dfrac{d}{dt} \tilde{F} (t) \ge M_3 \left(\int_{-\psi_+}^{\psi_-} \dfrac{\kappa_t^2}{\kappa^2} \; d\theta - \dfrac{1}{\psi_++\psi_-} \left(\int_{-\psi_+}^{\psi_-} \dfrac{\kappa_t}{\kappa} \; d\theta \right)^2\right), \label{main-3}\\
&\tilde{F} (t) \ge M_4 \label{main-4}
\end{align}
for $t \ge \varepsilon$. 
Integrating \eqref{main-3} on $[\varepsilon,\infty)$ and applying \eqref{main-4}, we have 
\[ \int_\varepsilon^\infty \left(\int_{-\psi_+}^{\psi_-} \dfrac{\kappa_t^2}{\kappa^2} \; d\theta - \dfrac{1}{\psi_++\psi_-} \left(\int_{-\psi_+}^{\psi_-} \dfrac{\kappa_t}{\kappa} \; d\theta \right)^2\right) \; dt \le \dfrac{M_2 - \tilde{F}(\varepsilon)}{M_1}. \]
Therefore, since \eqref{main-2} is non-negative, \eqref{main-2} converges to $0$ as $t \to \infty$. 
Thus, we obtain 
\[ \int_{-\psi_+}^{\psi_-} \dfrac{\overline{\kappa}^2}{\hat{\kappa}^2} \; d\theta = \dfrac{1}{\psi_++\psi_-} \left(\int_{-\psi_+}^{\psi_-} \dfrac{\overline{\kappa}}{\hat{\kappa}} \; d\theta \right)^2 \]
by virtue of the convergence \eqref{main-1} and the divergence of $t_i$ to infinity as $i \to \infty$. 
From the condition so that H\"{o}leder's inequality becomes an equality, we may see that $\overline{\kappa} = \alpha \hat{\kappa}$ for some $\alpha \in \mathbb{R}$. 

Therefore, we can apply Lemma \ref{lem:lim-kappa} and hence $\hat{\kappa}$ coincides with $\kappa_{\mathcal{W}}$ obtained in Corollary \ref{cor:ex-tw} with \eqref{area-lim-kappa}. 
From the convergence \eqref{main-1}, $\sup_{\theta \in [-\psi_+, \psi_-]} |\kappa(\theta,t_i) - \kappa_{\mathcal{W}}(\theta)|$ is sufficiently small for large $i \in \mathbb{N}$. 
Thus, Theorem \ref{thm:local-sta} can be applied to obtain the conclusion by replacing the initial time by $t_i$. 
\end{proof}

\end{document}